\theoremstyle{definition}
\newtheorem{thm}{Theorem}
\newtheorem{lem}{Lemma}[section]
\newtheorem{exa}{Example}
\numberwithin{equation}{section}
\numberwithin{equation}{subsection}
\newcommand{\F}{\mathbb{F}}
\newcommand{\N}{\mathbb{N}}
\newcommand{\Z}{\mathbb{Z}}
\def\dim{\mathrm{dim}}
\title{\textsf{On cohomology of filiform Lie superalgebras}}
\author{Yong Yang $^{a}$ and Wende Liu $^{b}$ \footnote{Correspondence: wendeliu@ustc.edu.cn}\setcounter{footnote}{-1}
\\
\\ \small\textit{$^{a}$School of Mathematics, Jilin University, Changchun 130012, China}
\\ \small\textit{$^{b}$School of Mathematical Sciences, Harbin Normal University, Harbin 150025, China}
  }
\date{ }
\begin{document}
\maketitle

\begin{quotation}
\small\noindent \textbf{Abstract}:
 Suppose the ground field $\F$ is an algebraically closed field of characteristic different from 2, 3. We determine the Betti numbers and make a decomposition of the associative superalgebra of the cohomology for the model filiform Lie superalgebra. We also describe the associative superalgebra structures of the (divided power) cohomology for some low-dimensional filiform Lie superalgebras.

\vspace{0.2cm} \noindent{\textbf{Keywords}}: filiform Lie superalgebra; Betti number; associative superalgebra

\vspace{0.2cm} \noindent{\textbf{Mathematics Subject Classification 2010}}: 17B30, 17B56

\end{quotation}
\setcounter{section}{-1}
\section{Introduction}
In 1970, in the study of the reducibility of the varieties of nilpotent Lie algebras, Vergne introduced the concept of filiform Lie algebras and showed  that every filiform Lie algebra can be obtained
by an infinitesimal deformation of the model filiform Lie algebra $L_{n}$ (see \cite{1}). Since then, the study of the filiform Lie algebras, especially the model filiform Lie algebra, has become an important subject. Many conclusions on cohomology of the model filiform Lie algebra with coefficients in the trivial module have been obtained. For example,
the Betti numbers for $L_{n}$ with coefficients in the trivial module over a field of characteristic zero have been calculated in \cite{2,3,4}. A result, in \cite{5}, states that the filiform Lie algebras $L_{n}$ and $\mathfrak{m}_{2}(n)$ have the same Betti numbers over a field of characteristic two, which is different from the case of characteristic zero. Moreover, the first three Betti numbers of $L_{n}$ and $\mathfrak{m}_{2}(n)$ over $\mathbb{Z}_{2}$ have been calculated in \cite{6}.
As what happens in the Lie case, every filiform Lie superalgebra can be obtained by an infinitesimal deformation of the model filiform Lie superalgebra $L_{n,m}$. Many conclusions on cohomology of the model filiform Lie superalgebra with coefficients in the adjoint module have been obtained. For example, Khakimdjanov and Navarro gave a complete description of the second cohomology of $L_{n,m}$ with coefficients in the adjoint module in \cite{7,8,9,10,11}. The first cohomology of $L_{n,m}$ with coefficients in the adjoint module has been described in \cite{12} by calculating the derivations. However, in the trivial module case, less of work is done for $L_{n,m}$.

Throughout this paper, the ground field $\F$ is an algebraically closed field of characteristic different from 2, 3 and all vector spaces, algebras are over $\F$. In the characteristic zero case, for any non-negative integer $k$, we make a decomposition of $\mathrm{H}^{k}(L_{n,m})$ by the Hochschild-Serre spectral sequences, moreover, we can describe completely the Betti number of $\mathrm{H}^{\bullet}(L_{n,m})$ and the superalgebra structures of the cohomology for some low-dimensional filiform Lie superalgebras. We also describe the graded superalgebra structures of the (divided power) cohomology for some low-dimensional filiform Lie superalgebras by the Hochschild-Serre spectral sequences to a certain ideal, in characteristic different from 2, 3.

\section{Filiform Lie superalgebras}

A \emph{Lie superalgebra} is a $\Z_2$-graded algebra whose multiplication satisfies
the skew-supersymmetry and the super Jacobi identity (see \cite{13}). For a Lie superalgebra $L$, we inductively define two sequences:
\begin{equation*}
L^0_{\bar 0}=L_{\bar 0}, \qquad L_{\bar 0}^{i+1}=[L_{\bar 0},L^i_{\bar 0}]
\end{equation*}
and
\begin{equation*}
L^0_{\bar 1}=L_{\bar 1}, \qquad L_{\bar 1}^{i+1}=[L_{\bar 0},L^i_{\bar 1}].
\end{equation*}
If there exists $(m,n)\in \N^2$ such that
$L_{\bar 0}^m=0$, $L_{\bar 0}^{m-1} \neq 0$ and
$L_{\bar 1}^n=0$, $L_{\bar 1}^{n-1} \neq 0$,
the
pair $(m,n)$ is called the \emph{super-nilindex} of $L$.
In particular, a Lie superalgebra $L$ is called \emph{filiform} if
its super-nilindex is $(\dim L_{\bar 0}-1, \dim L_{\bar 1})$ (see \cite{7}).

Denote by $\mathcal{F}_{n,m}$ the set of filiform Lie superalgebras of super-nilindex $(n,m)$.
Let $\mathcal{F}$ be a filiform Lie superalgebra over $\mathbb{C}$. If $\mathcal{F}\in \mathcal{F}_{n,m}$, there exists a basis $\{ X_0,X_1,\ldots ,X_n\mid Y_1,\ldots, Y_m\}$ of $\mathcal{F}$ such that:
\begin{eqnarray*}
&&[X_{0},X_{i}]=X_{i+1},\ 1\leq i\leq n-1,\quad [X_{0},X_{n}]=0;\\
&&[X_{1},X_{2}]\in \mathbb{C}X_{4}+\cdots+\mathbb{C}X_{n}; \\
&&[X_{0},Y_{i}]=Y_{i+1},\ 1\leq i\leq m-1,\quad [X_{0},Y_{m}]=0.
\end{eqnarray*}
Recall the following classifications up to isomorphism (see \cite{14}):

The classification of $\mathcal{F}_{1,2}$:

$(1)\ \mathcal{F}_{1,2}^{1}: [X_{0},Y_{1}]=Y_{2};$

$(2)\ \mathcal{F}_{1,2}^{2}: [X_{0},Y_{1}]=[X_{1},Y_{1}]=Y_{2};$

$(3)\ \mathcal{F}_{1,2}^{3}: [X_{0},Y_{1}]=Y_{2},\ [Y_{1},Y_{1}]=X_{1}.$

The classification of $\mathcal{F}_{2,2}$:

$(1)\ \mathcal{F}_{2,2}^{1}: [X_{0},X_{1}]=X_{2},\ [X_{0},Y_{1}]=Y_{2};$

$(2)\ \mathcal{F}_{2,2}^{2}: [X_{0},X_{1}]=2[Y_{1},Y_{2}]=X_{2},\ [X_{0},Y_{1}]=Y_{2},\ [Y_{1},Y_{1}]=X_{1};$

$(3)\ \mathcal{F}_{2,2}^{3}: [X_{0},X_{1}]=[Y_{1},Y_{1}]=X_{2},\ [X_{0},Y_{1}]=Y_{2};$

$(4)\ \mathcal{F}_{2,2}^{4}: [X_{0},X_{1}]=X_{2},\ [X_{0},Y_{1}]=[X_{1},Y_{1}]=Y_{2};$

$(5)\ \mathcal{F}_{2,2}^{5}: [X_{0},X_{1}]=[Y_{1},Y_{1}]=X_{2},\ [X_{0},Y_{1}]=[X_{1},Y_{1}]=Y_{2}.$


Let $L_{n,m}$ be the filiform Lie superalgebra with a homogeneous basis
$$\{ X_0,X_1,\ldots ,X_n\mid Y_1,\ldots, Y_m\}$$
and   Lie super-brackets are given by
\begin{equation*}
[X_0,X_i]=X_{i+1}, \; 1\leq i\leq n-1;  \quad
[X_0,Y_j]=Y_{j+1}, \; 1\leq j\leq m-1.
\end{equation*}
 We call
$L_{n,m}$ the {\emph{model filiform Lie superalgebra}} and
$\{ X_0,X_1,\dots ,X_n\mid Y_1,\dots, Y_m\}$ the {\emph{standard basis}} of $L_{n,m}$.

Obviously, we have:
$$\mathcal{F}_{1,2}^{1}=L_{1,2},\quad \mathcal{F}_{2,2}^{1}=L_{2,2}.$$

\section{(Divided power) cohomology}

In this section, we introduce the definitions of cohomology and divided power cohomology of $\mathfrak{g}$ with coefficients in the trivial module.  For more details, the reader is referred to \cite{13,15,16}.

Let $\mathfrak{g}$ be a finite-dimensional Lie superalgebra, denote by $\mathfrak{g}^{\ast}$ the dual superspace of $\mathfrak{g}$. Fix an ordered basis of $\mathfrak{g}$
\begin{equation}\label{1}
\{x_{1},\ldots,x_{m}\mid x_{m+1},\ldots,x_{m+n}\},
\end{equation}
where $|x_{1}|=\cdots=|x_{m}|=0$, $|x_{m+1}|=\cdots=|x_{m+n}|=1$, and write
\begin{equation*}
\{x_{1}^{\ast},\ldots,x_{m}^{\ast}\mid x_{m+1}^{\ast},\ldots,x_{m+n}^{\ast}\},
\end{equation*}
for the dual basis.

For $k\in \mathbb{Z}$, we let $\bigwedge^{k}\mathfrak{g}^{\ast}$ be the $k$-th super-exterior product of $\mathfrak{g}^{\ast}$. Let $\bigwedge^{\bullet}\mathfrak{g}^{\ast}=\bigoplus\limits_{k\in \mathbb{N}_{0}}\bigwedge^{k}\mathfrak{g}^{\ast}$. Then $\bigwedge^{\bullet}\mathfrak{g}^{\ast}$ can be viewed as a $\mathfrak{g}$-module in a natural manner. Note that $\bigwedge^{\bullet}\mathfrak{g}^{\ast}$ also has a $\mathbb{Z}$-grading structure given by setting $\|x_{1}\|=\cdots=\|x_{m+n}\|=1$. Hereafter $\|x\|$ denotes the $\mathbb{Z}$-degree of a $\mathbb{Z}$-homogeneous element $x$ in a $\mathbb{Z}$-graded superspace. Let $d:\bigwedge^{\bullet}\mathfrak{g}^{\ast}\longrightarrow \bigwedge^{\bullet}\mathfrak{g}^{\ast}$ be the linear operator induced by the dual of the Lie superalgebra bracket map $\mathfrak{g}^{\ast}\longrightarrow \bigwedge^{2}\mathfrak{g}^{\ast}$. Then
\begin{eqnarray*}
&&d(1)=0, \\
&&d(x_{i}^{\ast})=\sum\limits_{1\leq k<l\leq m+n}(-1)^{|x_{k}^{\ast}||x_{l}^{\ast}|}a_{kl}^{i}x_{k}^{\ast}x_{l}^{\ast}-\frac{1}{2}\sum\limits_{m+1\leq k\leq m+n}a_{kk}^{i}x_{k}^{\ast^{2}},\  1\leq i\leq m+n, \\
&&d(x\wedge y)=d(x)\wedge y+(-1)^{\|x\|}x\wedge d(y),\ x,y\in \bigwedge^{\bullet}\mathfrak{g}^{\ast},
\end{eqnarray*}
where $a_{kl}^{i}$, $1\leq i,k,l\leq m+n$, are the structure constants of $\mathfrak{g}$ with respect to the basis (\ref{1}). Then $d$ is a $\mathfrak{g}$-module homomorphism and
$$d^{2}=0,\quad |d|=0,\quad \|d\|=1.$$
Denote by $\mathrm{H}^{\bullet}(\mathfrak{g})$ the $cohomology$ of $\mathfrak{g}$ defined by the cochain complex $(\bigwedge^{\bullet}\mathfrak{g}^{\ast},d)$. Note that $\bigwedge^{\bullet}\mathfrak{g}^{\ast}$ is a $\mathbb{Z}$-graded associative superalgebra in a natural manner, which induces a $\mathbb{Z}$-graded associative superalgebra structure on $\mathrm{H}^{\bullet}(\mathfrak{g})$. In particular, the dimension of $\mathrm{H}^{\mathrm{k}}(\mathfrak{g})$ is called the $k$-th $\emph{Betti}$ $number$.

Over a field of prime characteristic $p>2$, we introduce the definition of divided power cohomology of $\mathfrak{g}$. For a multi-index $\underline{r}=(r_{1},\ldots,r_{m+n})$, where $r_{1},\ldots,r_{m}$ are 0 or 1, and $r_{m+1},\ldots,r_{m+n}$ are non-negative integers, we set
$$u_{i}^{(r_{i})}=\frac{x_{i}^{r_{i}}}{r_{i}!}\quad \mathrm{and} \quad u^{(\underline{r})}=\prod_{i=1}^{m+n}u_{i}^{(r_{i})}.$$
Clearly, their multiplication relations are
\begin{equation}\label{2}
u^{(\underline{r})}u^{(\underline{s})}=\bigg(\prod\limits_{i=1}^{m}\mathrm{min}(1,2-r_{i}-s_{i})\bigg)(-1)^{\sum\limits_{j=1}^{m}\sum\limits_{k=m+1}^{m+n}r_{k}s_{j}+\sum\limits_{1\leq j<k\leq m}r_{k}s_{j}}
\left(
                    \begin{array}{c}
                        \underline{r}+\underline{s} \\
                          \underline{r} \\
                       \end{array}
                     \right)
u^{(\underline{r}+\underline{s})},
\end{equation}
where
\begin{equation*}
\left(
                    \begin{array}{c}
                        \underline{r}+\underline{s} \\
                          \underline{r} \\
                       \end{array}
                     \right)
=\prod_{i=m+1}^{m+n}\left(
                    \begin{array}{c}
                        r_{i}+s_{i} \\
                          r_{i} \\
                       \end{array}
                     \right).
\end{equation*}
Fix two n-tuples of positive integers $\underline{t}=(t_{1},\ldots,t_{n})$ and $\pi=(\pi_{1},\ldots,\pi_{n})$, where $\pi_{i} = p^{t_{i}}$, $1\leq i\leq n$. Denote
\begin{equation*}
 \mathcal{O}(m,n;\underline{t})=\mathcal{O}(\mathfrak{g}^{\ast};\underline{t})=\mathrm{span}_{\F}\left(u^{(\underline{r})}\mid r_{i}\left\{
       \begin{array}{ll}
         <\pi_{i}, & \hbox{$m+1\leq i\leq m+n$;} \\
         =0\ \mathrm{or}\ 1, & \hbox{$1\leq i\leq m$.}
       \end{array}
     \right.
\right).
\end{equation*}
From Eq. (\ref{2}), $\mathcal{O}(\mathfrak{g}^{\ast};\underline{t})$ is a finite dimensional submodule and a graded subalgebra of $\bigwedge^{\bullet}\mathfrak{g}^{\ast}$. In particular,
$$\mathcal{O}(\mathfrak{g}^{\ast};\underline{t})=\bigoplus_{i=0}^{m-n+\sum\limits_{j=1}^{n}p^{t_{j}}}\mathcal{O}_{i}(\mathfrak{g}^{\ast};\underline{t}),$$
where $\mathcal{O}_{i}(\mathfrak{g}^{\ast};\underline{t})=\mathrm{span}_{\F}\left\{u^{(\underline{r})}\mid \sum\limits_{k=1}^{m+n}r_{k}=i\right\}$.
Let $d:\mathcal{O}(\mathfrak{g}^{\ast};\underline{t})\longrightarrow \mathcal{O}(\mathfrak{g}^{\ast};\underline{t})$ be the linear operator induced by the dual of the Lie superalgebra bracket map. Then
\begin{eqnarray*}
&&d(1)=0, \\
&&d(x_{i}^{\ast})=\sum\limits_{1\leq k<l\leq m+n}(-1)^{|x_{k}^{\ast}||x_{l}^{\ast}|}a_{kl}^{i}x_{k}^{\ast}x_{l}^{\ast}-\sum\limits_{m+1\leq k\leq m+n}a_{kk}^{i}x_{k}^{\ast^{(2)}},\  1\leq i\leq m+n, \\
&&d(u^{(\underline{r})})=\sum_{j=1}^{m+n}(-1)^{r_{1}+\ldots+r_{j-1}+|x_{j}^{\ast}|(r_{m+1}+\ldots+r_{j-1})}d(x_{j}^{\ast})u^{(\underline{r}-\varepsilon_{j})},
\end{eqnarray*}
where $\varepsilon_{j}=(\delta_{j,1},\ldots,\delta_{j,m+n})$, $a_{kl}^{i}$, $1\leq i,j,k,l\leq m+n$, are the structure constants of $\mathfrak{g}$ with respect to the basis (\ref{1}). Then $d$ is a $\mathfrak{g}$-module homomorphism and
$$d^{2}=0,\quad |d|=0,\quad \|d\|=1.$$
Denote by $\mathrm{DPH}^{\bullet}(\mathfrak{g})$ the $divided\ power\ cohomology$ of $\mathfrak{g}$ defined by the cochain complex $(\mathcal{O}(\mathfrak{g}^{\ast};\underline{t}),d)$. Note that $\mathcal{O}(\mathfrak{g}^{\ast};\underline{t})$ is a $\mathbb{Z}$-graded associative superalgebra in a natural manner, which induces a $\mathbb{Z}$-graded associative superalgebra structure on $\mathrm{DPH}^{\bullet}(\mathfrak{g})$.

Finally, we close this section with some remarks on the Hochschild-Serre spectral sequence (see \cite{13}). Suppose $I$ is an ideal of $\mathfrak{g}$. Then there is a convergent spectral sequence called the Hochschild-Serre spectral sequence such that
\begin{equation}\label{1.1}
\mathrm{E}_{2}^{k,s}=\mathrm{H}^{k}(\mathfrak{g}/I,\mathrm{H}^{s}(I))\Longrightarrow \mathrm{H}^{k+s}(\mathfrak{g}).
\end{equation}

\section{Characteristic zero}  \label{mainresult}

Throughout this section the ground field $\F$ is an algebraically closed field of characteristic zero.

\subsection{Model filiform Lie superalgebras}

Let $\mathcal{I}$ be a subspace of $L_{n,m}$ spanned by
$$\{ X_1,\ldots ,X_n\mid Y_1,\ldots, Y_m\}.$$
Obviously, $\mathcal{I}$ is an abelian ideal of $L_{n,m}$. Moreover, $\mathrm{H}^{\bullet}(L_{n,m})$ is described by using the Hochschild-Serre spectral sequence relative to $\mathcal{I}$.

\begin{lem} \label{2.1.1}
Let
\begin{align*}
D: \bigwedge^{\bullet}\mathcal{I}^{\ast} &\longrightarrow \bigwedge^{\bullet}\mathcal{I}^{\ast},\\
x  &\longmapsto -X_{0}\cdot x .
\end{align*}
Let $D_{k}=D\mid_{\bigwedge^{k}\mathcal{I}^{\ast}}$. Then
$\mathrm{H}^{k}(L_{n,m})=\mathrm{Ker}\ D_{k}\bigoplus\left(\mathbb{F}X_{0}^{\ast}\bigwedge\left(\bigwedge^{k-1}\mathcal{I}^{\ast}/\mathrm{Im}\ D_{k-1}\right)\right).$
\end{lem}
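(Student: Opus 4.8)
The plan is to apply the Hochschild–Serre spectral sequence (\ref{1.1}) to the abelian ideal $\mathcal{I}$, for which the quotient $L_{n,m}/\mathcal{I}$ is the one-dimensional (purely even) Lie algebra $\F X_0$. The key simplification is that for an abelian ideal $\mathcal{I}$ the coefficient module $\mathrm{H}^s(\mathcal{I})$ is simply $\bigwedge^s\mathcal{I}^*$ with its natural $L_{n,m}/\mathcal{I}$-action, so the $\mathrm{E}_2$-page reads $\mathrm{E}_2^{k,s}=\mathrm{H}^k(\F X_0,\bigwedge^s\mathcal{I}^*)$. Since $\F X_0$ is one-dimensional, its Chevalley–Eilenberg complex with coefficients in a module $M$ is just $0\to M\xrightarrow{\,\rho(X_0)\,}M\to 0$ (sitting in cohomological degrees $0$ and $1$), so only the columns $k=0$ and $k=1$ are nonzero. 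Concretely, with $M=\bigwedge^s\mathcal{I}^*$ and the action $X_0\cdot x$, one gets $\mathrm{E}_2^{0,s}=\mathrm{Ker}\,D_s$ (the invariants, after the sign adjustment in the definition of $D$) and $\mathrm{E}_2^{1,s}=\bigwedge^s\mathcal{I}^*/\mathrm{Im}\,D_s$ (the coinvariants), where I identify $\mathrm{E}_2^{1,s}$ with $X_0^*\wedge(\bigwedge^s\mathcal{I}^*/\mathrm{Im}\,D_s)$ via multiplication by the generator $X_0^*$ of $\mathrm{H}^1(\F X_0)$.

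The second step is the collapse argument: because the spectral sequence is concentrated in the two columns $k=0,1$, every differential $\mathrm{d}_r$ for $r\ge 2$ goes from column $k$ to column $k+r\ge k+2$, hence lands in a zero group. Therefore $\mathrm{E}_2=\mathrm{E}_\infty$, and the spectral sequence degenerates at $\mathrm{E}_2$. The abutment then gives, for each total degree $k$, a (short) filtration of $\mathrm{H}^k(L_{n,m})$ with associated graded $\mathrm{E}_\infty^{0,k}\oplus\mathrm{E}_\infty^{1,k-1}=\mathrm{Ker}\,D_k\oplus\bigl(\bigwedge^{k-1}\mathcal{I}^*/\mathrm{Im}\,D_{k-1}\bigr)$, which already yields the dimension count. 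To upgrade the filtration to an actual direct-sum decomposition as stated, I would exhibit explicit cocycle representatives: the subspace $\mathrm{Ker}\,D_k\subseteq\bigwedge^k\mathcal{I}^*\subseteq\bigwedge^k L_{n,m}^*$ consists of genuine $d$-closed forms not involving $X_0^*$ (their $d$ is forced to lie in $X_0^*\wedge\bigwedge^k\mathcal{I}^*$ and vanishes precisely when $D_k$ kills them), while the other summand is represented by forms $X_0^*\wedge\omega$ with $\omega\in\bigwedge^{k-1}\mathcal{I}^*$; a direct check using $d(X_0^*\wedge\omega)=\pm X_0^*\wedge d\omega$ together with $d\omega\in X_0^*\wedge(\cdots)$ shows $X_0^*\wedge\omega$ is always closed and is exact iff $\omega\in\mathrm{Im}\,D_{k-1}$ modulo $\mathrm{Ker}\,D_{k-1}$-contributions. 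Matching these two explicit families of representatives against the two columns of $\mathrm{E}_\infty$ gives the claimed internal direct sum.

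The main obstacle is bookkeeping rather than conceptual: one must pin down the correct identification of the Chevalley–Eilenberg differential $d$ on $\bigwedge^\bullet L_{n,m}^*$ with the operators $D_k$ on $\bigwedge^\bullet\mathcal{I}^*$, tracking the sign conventions (the $(-1)^{\|x\|}$ in the Leibniz rule for $d$ and the sign built into $D=-X_0\cdot(-)$) so that "invariants" and "coinvariants" of the $X_0$-action come out exactly as $\mathrm{Ker}\,D_k$ and $\mathrm{coker}\,D_{k-1}$. In the super setting one also has to be slightly careful that $\bigwedge^\bullet\mathcal{I}^*$ is the super-exterior algebra (so the odd dual generators $Y_j^*$ contribute symmetric/divided-power pieces), but since $\mathcal{I}$ is abelian this does not affect $D$, which is induced purely by the adjoint action of $X_0$. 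Once the signs are fixed the decomposition of $\mathrm{H}^k(L_{n,m})$ falls out of the two-column degeneration, and I would close by noting that the associative-superalgebra structure on $\mathrm{H}^\bullet(L_{n,m})$ restricts compatibly, which will be used in the subsequent subsections.
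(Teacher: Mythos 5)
Your proposal follows essentially the same route as the paper: apply the Hochschild--Serre spectral sequence to the abelian ideal $\mathcal{I}$, observe that only the columns $p=0,1$ survive because $L_{n,m}/\mathcal{I}\cong\F X_{0}$ is one-dimensional, so the sequence degenerates at $\mathrm{E}_{2}$, and identify $\mathrm{E}_{\infty}^{0,k}$ with $\mathrm{Ker}\ D_{k}$ and $\mathrm{E}_{\infty}^{1,k-1}$ with $\mathbb{F}X_{0}^{\ast}\wedge\bigl(\bigwedge^{k-1}\mathcal{I}^{\ast}/\mathrm{Im}\ D_{k-1}\bigr)$. Your extra care in upgrading the two-step filtration to an internal direct sum via explicit cocycle representatives is a welcome refinement of a point the paper passes over silently, but it does not change the argument.
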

\begin{proof}
From Eq. (\ref{1.1}), we have
\begin{equation*}
\mathrm{E}_{\infty}^{p,k-p}\cong\left\{
                       \begin{array}{ll}
                        \mathrm{H}^{0}\left(\F X_{0},\bigwedge^{k}\mathcal{I}^{\ast}\right), & \hbox{$p=0;$} \\
                        \mathrm{H}^{1}\left(\F X_{0},\bigwedge^{k-1}\mathcal{I}^{\ast}\right), & \hbox{$p=1;$} \\
                        0, & \hbox{$p\neq0,1.$}
                        \end{array}
                       \right.
\end{equation*}
Then we have
\begin{equation*}
\mathrm{H}^{k}(L_{n,m})=\bigoplus_{p+q=k}\mathrm{E}_{\infty}^{p,q}=\mathrm{H}^{0}\left(\mathbb{F}X_{0},\bigwedge^{k}\mathcal{I}^{\ast}\right)\bigoplus \mathrm{H}^{1}\left(\mathbb{F}X_{0},\bigwedge^{k-1}\mathcal{I}^{\ast}\right).
\end{equation*}
By the definitions of the low cohomology (see \cite{13}), we have
\begin{eqnarray*}
&&\mathrm{H}^{0}\left(\mathbb{F}X_{0},\bigwedge^{k}\mathcal{I}^{\ast}\right)=\mathrm{Ker}\ D_{k},\\
&&\mathrm{Der}\left(\mathbb{F}X_{0},\bigwedge^{k-1}\mathcal{I}^{\ast}\right)=\mathbb{F}X_{0}^{\ast}\bigwedge \ \bigwedge^{k-1}\mathcal{I}^{\ast},\\
&&\mathrm{Inder}\left(\mathbb{F}X_{0},\bigwedge^{k-1}\mathcal{I}^{\ast}\right)=\mathbb{F}X_{0}^{\ast}\bigwedge\mathrm{Im}\ D_{k-1}.
\end{eqnarray*}
Moreover, we obtain that
$$\mathrm{H}^{k}(L_{n,m})=\mathrm{Ker}\ D_{k}\bigoplus\left(\mathbb{F}X_{0}^{\ast}\bigwedge\left(\bigwedge^{k-1}\mathcal{I}^{\ast}/\mathrm{Im}\ D_{k-1}\right)\right).$$
\end{proof}
\begin{thm} \label{2.1.2}
As a $\mathbb{Z}$-graded superalgebra, we have
$$\mathrm{H}^{\bullet}(L_{n,m})=\mathrm{Ker}\ D \ltimes \left(\mathbb{F}X_{0}^{\ast}\bigwedge\left(\bigwedge^{\bullet}\mathcal{I}^{\ast}/\mathrm{Im}\ D\right)\right).$$
\end{thm}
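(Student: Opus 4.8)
The plan is to realize the multiplicative structure at the level of the Chevalley--Eilenberg cochain algebra $\bigwedge^{\bullet}L_{n,m}^{\ast}$, refining the vector-space decomposition of Lemma~\ref{2.1.1} into one of $\Z$-graded superalgebras. Fix the splitting $L_{n,m}=\F X_{0}\oplus\mathcal{I}$. Since $X_{0}^{\ast}$ is a single generator with $X_{0}^{\ast}\wedge X_{0}^{\ast}=0$, this gives a decomposition of associative superalgebras
$$\bigwedge^{\bullet}L_{n,m}^{\ast}=\bigwedge^{\bullet}\mathcal{I}^{\ast}\ \oplus\ X_{0}^{\ast}\wedge\bigwedge^{\bullet}\mathcal{I}^{\ast},$$
in which the second summand is a square-zero two-sided ideal and $\xi\mapsto X_{0}^{\ast}\wedge\xi$ maps $\bigwedge^{\bullet}\mathcal{I}^{\ast}$ isomorphically onto it. From the bracket relations of $L_{n,m}$ one reads off $d(X_{0}^{\ast})=0$ and, on the algebra generators $X_{i}^{\ast},Y_{j}^{\ast}$ of $\bigwedge^{\bullet}\mathcal{I}^{\ast}$, the identity $d(\xi)=X_{0}^{\ast}\wedge D(\xi)$ with $D$ as in Lemma~\ref{2.1.1}; since both $d$ and $X_{0}^{\ast}\wedge D(-)$ obey the same twisted Leibniz rule on $\bigwedge^{\bullet}\mathcal{I}^{\ast}$, the identity holds on all of $\bigwedge^{\bullet}\mathcal{I}^{\ast}$. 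Note that $D$, being minus the action of the even element $X_{0}$, is an even derivation of $\bigwedge^{\bullet}\mathcal{I}^{\ast}$ of $\Z$-degree $0$; hence $\mathrm{Ker}\ D$ is a unital $\Z$-graded sub-superalgebra, and $\mathrm{Ker}\ D$ maps $\mathrm{Im}\ D$ into itself under both left and right multiplication, so $\mathrm{Ker}\ D$ acts naturally on $\bigwedge^{\bullet}\mathcal{I}^{\ast}/\mathrm{Im}\ D$.

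Next I would pin down the cocycles and coboundaries. Because $d(X_{0}^{\ast}\wedge\eta)=-X_{0}^{\ast}\wedge d(\eta)=-X_{0}^{\ast}\wedge X_{0}^{\ast}\wedge D(\eta)=0$, the ideal $X_{0}^{\ast}\wedge\bigwedge^{\bullet}\mathcal{I}^{\ast}$ consists of cocycles, while $d(\xi)=X_{0}^{\ast}\wedge D(\xi)$ together with the injectivity of $X_{0}^{\ast}\wedge(-)$ shows that $\xi\in\bigwedge^{\bullet}\mathcal{I}^{\ast}$ is a cocycle iff $\xi\in\mathrm{Ker}\ D$. Hence the cocycle subalgebra and the coboundary space are
$$Z^{\bullet}=\mathrm{Ker}\ D\ \oplus\ \Big(X_{0}^{\ast}\wedge\bigwedge^{\bullet}\mathcal{I}^{\ast}\Big),\qquad B^{\bullet}=X_{0}^{\ast}\wedge\mathrm{Im}\ D,$$
which also recovers Lemma~\ref{2.1.1}. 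As a subalgebra of $\bigwedge^{\bullet}L_{n,m}^{\ast}$, $Z^{\bullet}$ is itself a semidirect product $\mathrm{Ker}\ D\ltimes\big(X_{0}^{\ast}\wedge\bigwedge^{\bullet}\mathcal{I}^{\ast}\big)$: the first factor is a unital sub-superalgebra, the second a square-zero ideal, the $\mathrm{Ker}\ D$-bimodule structure being $\xi\cdot(X_{0}^{\ast}\wedge\eta)=(-1)^{\|\xi\|}X_{0}^{\ast}\wedge(\xi\wedge\eta)$ and $(X_{0}^{\ast}\wedge\eta)\cdot\xi=X_{0}^{\ast}\wedge(\eta\wedge\xi)$. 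Moreover $B^{\bullet}$ lies inside this square-zero ideal, and since $\mathrm{Ker}\ D$ preserves $\mathrm{Im}\ D$, $B^{\bullet}$ is a two-sided ideal of $Z^{\bullet}$.

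Finally I would descend to the quotient $\mathrm{H}^{\bullet}(L_{n,m})=Z^{\bullet}/B^{\bullet}$, which is legitimate because the product on $\mathrm{H}^{\bullet}(L_{n,m})$ is induced by $\wedge$ on cocycles. As $\mathrm{Ker}\ D\cap B^{\bullet}=0$, the composite $\mathrm{Ker}\ D\hookrightarrow Z^{\bullet}\twoheadrightarrow\mathrm{H}^{\bullet}(L_{n,m})$ embeds $\mathrm{Ker}\ D$ as a sub-superalgebra; the square-zero ideal $X_{0}^{\ast}\wedge\bigwedge^{\bullet}\mathcal{I}^{\ast}$ maps onto the square-zero ideal $\big(X_{0}^{\ast}\wedge\bigwedge^{\bullet}\mathcal{I}^{\ast}\big)/B^{\bullet}\cong\F X_{0}^{\ast}\bigwedge\big(\bigwedge^{\bullet}\mathcal{I}^{\ast}/\mathrm{Im}\ D\big)$, with the induced bimodule structure precisely the natural multiplication on $\bigwedge^{\bullet}\mathcal{I}^{\ast}/\mathrm{Im}\ D$; and the two pieces together span $\mathrm{H}^{\bullet}(L_{n,m})$ with the first a subalgebra and the second an ideal. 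All maps respect the $\Z$-grading (with the shift recorded in Lemma~\ref{2.1.1}) and the $\Z_{2}$-grading (as $|X_{0}^{\ast}|=\bar{0}$), so this is a decomposition of $\Z$-graded superalgebras, giving $\mathrm{H}^{\bullet}(L_{n,m})=\mathrm{Ker}\ D\ltimes\big(\F X_{0}^{\ast}\bigwedge(\bigwedge^{\bullet}\mathcal{I}^{\ast}/\mathrm{Im}\ D)\big)$.

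The only real work is Koszul-sign bookkeeping: one must confirm that $d(\xi)=X_{0}^{\ast}\wedge D(\xi)$ holds exactly with the $-X_{0}\cdot(-)$ normalization of $D$, and that the sign $(-1)^{\|\xi\|}$ arising when $\xi$ is commuted past $X_{0}^{\ast}$ is precisely the one needed for $\mathrm{Ker}\ D$ to act on $\bigwedge^{\bullet}\mathcal{I}^{\ast}/\mathrm{Im}\ D$ through an honest superalgebra homomorphism rather than merely up to sign. Since $X_{0}^{\ast}$ is a single fixed generator and both $D$ and $X_{0}^{\ast}\wedge(-)$ are homogeneous for all the relevant gradings, these signs are routine to track and pose no genuine obstacle.
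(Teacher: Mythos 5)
Your argument is correct, and it reaches the theorem by a route that is genuinely more self-contained than the paper's. The paper obtains the additive decomposition $\mathrm{H}^{k}(L_{n,m})=\mathrm{Ker}\ D_{k}\oplus\big(\F X_{0}^{\ast}\wedge(\bigwedge^{k-1}\mathcal{I}^{\ast}/\mathrm{Im}\ D_{k-1})\big)$ from the Hochschild--Serre spectral sequence relative to $\mathcal{I}$ (Lemma \ref{2.1.1}), and then proves the theorem by checking only the multiplicative closure properties: $D$ is a derivation, so $\mathrm{Ker}\ D$ is a subalgebra, and the $X_{0}^{\ast}$-part is an ideal squaring to zero. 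You instead bypass the spectral sequence entirely, using $d(X_{0}^{\ast})=0$ and $d(\xi)=X_{0}^{\ast}\wedge D(\xi)$ to compute $Z^{\bullet}$ and $B^{\bullet}$ explicitly inside the cochain algebra $\bigwedge^{\bullet}L_{n,m}^{\ast}=\bigwedge^{\bullet}\mathcal{I}^{\ast}\oplus X_{0}^{\ast}\wedge\bigwedge^{\bullet}\mathcal{I}^{\ast}$, so that the additive decomposition and the ring structure come out simultaneously; the multiplicative verification at the end (derivation property of $D$, $\mathrm{Ker}\ D\wedge\mathrm{Im}\ D\subseteq\mathrm{Im}\ D$, square-zero ideal since $X_{0}^{\ast}\wedge X_{0}^{\ast}=0$) coincides with the paper's. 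What your approach buys is transparency on a point the paper leaves implicit: a spectral sequence a priori identifies only the associated graded of a filtration on $\mathrm{H}^{\bullet}$ as an algebra, whereas your direct computation exhibits honest cocycle representatives for every class and shows the product on $\mathrm{H}^{\bullet}(L_{n,m})$ is literally the wedge of those representatives modulo $X_{0}^{\ast}\wedge\mathrm{Im}\ D$. The only items you defer --- that $d$ and $X_{0}^{\ast}\wedge D(-)$ satisfy the same signed Leibniz rule, and the sign $(-1)^{\|\xi\|}$ in the bimodule action --- are indeed routine given that $X_{0}^{\ast}$ is a single even generator of $\Z$-degree $1$, so the deferral is harmless.
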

\begin{proof}
For any $x,y\in \mathrm{Ker}\ D$, $\overline{z}\in \bigwedge^{\bullet}\mathcal{I}^{\ast}/\mathrm{Im}\ D$, we have
\begin{align*}
D(x\wedge y)=-X_{0}\cdot (x\wedge y)&=-[(X_{0}\cdot x)\wedge y+x\wedge (X_{0}\cdot y)],\\
&=D(x)\wedge y+x\wedge D(y),\\
&=0,
\end{align*}
and
\begin{align*}
(X_{0}^{\ast}\wedge \overline{z})\wedge x&=X_{0}^{\ast}\wedge(\overline{z}\wedge x),\\
&=X_{0}^{\ast}\wedge(\overline{z\wedge x})
\in \mathbb{F}X_{0}^{\ast}\bigwedge\left(\bigwedge^{\bullet}\mathcal{I}^{\ast}/\mathrm{Im}\ D\right).
\end{align*}
Thus, $\mathrm{Ker}\ D$ is a subalgebra of $\mathrm{H}^{\bullet}(L_{n,m})$ and $\mathbb{F}X_{0}^{\ast}\bigwedge\left(\bigwedge^{\bullet}\mathcal{I}^{\ast}/\mathrm{Im}\ D\right)$ is an ideal of
$\mathrm{H}^{\bullet}(L_{n,m})$ with trivial multiplication. From Lemma \ref{2.1.1}, the proof is complete.
\end{proof}

 In order to calculate the dimension of $\mathrm{H}^{\bullet}(L_{n,m})$, it suffices to calculate the dimension of $\mathrm{Ker}\ D$.

 Let
$$f_{i}=\frac{1}{(n-i)!}X_{i}^{\ast},\ 1\leq i\leq n ,\quad g_{j}=\frac{1}{(m-j)!}Y_{j}^{\ast},\ 1\leq j\leq m .$$

Let
$x, y, h\in \mathrm{End}_{\F}(\mathcal{I}^{\ast}_{0})$, such that
\begin{eqnarray*}
&&x=D\mid_{\mathcal{I}^{\ast}_{0}}, \\
&&y(f_{i})=if_{i+1},\ 1\leq i\leq n-1, \quad y(f_{n})=0, \\
&&h(f_{i})=(n+1-2i)f_{i},\ 1\leq i\leq n.
\end{eqnarray*}

Let
$x', y', h'\in \mathrm{End}_{\F}(\mathcal{I}^{\ast}_{1})$, such that
\begin{eqnarray*}
&&x'=D\mid_{\mathcal{I}^{\ast}_{1}}, \\
&&y'(g_{j})=jg_{j+1},\ 1\leq j\leq m-1,\quad y'(g_{m})=0, \\
&&h'(g_{j})=(m+1-2j)g_{j},\ 1\leq j\leq m.
\end{eqnarray*}

Obviously, $\mathrm{span}_{\F}\{x,y,h\}$, $\mathrm{span}_{\F}\{x',y',h'\}$ are subalgebras of $\mathfrak{gl}(\mathcal{I}^{\ast}_{0})$ and $\mathfrak{gl}(\mathcal{I}^{\ast}_{1})$, respectively. Moreover, the following Lie algebra isomorphisms hold:
$$\mathrm{span}_{\F}\{x,y,h\}\cong\mathrm{span}_{\F}\{x',y',h'\}\cong \mathfrak{sl}(2).$$

By Weyl's Theorem and representation theory of $\mathfrak{sl}(2)$ (see \cite{17}), $\mathcal{I}^{\ast}_{0}$ and $\mathcal{I}^{\ast}_{1}$ are simple modules of $\mathfrak{sl}(2)$ and, for $k\geq0$, $\bigwedge^{k}\mathcal{I}^{\ast}$ is a completely reducible module of $\mathfrak{sl}(2)$. Moreover, we can obtain the following theorem.

\begin{lem} \label{2.1.3}
Suppose  $k\geq0$. Then
$$\mathrm{dim}\ \mathrm{Ker}\ D_{k}=\sum\limits_{k_{0}=0}^{k} \mathfrak{f}_{n,m}(k_{0},k-k_{0}),$$
where $\mathfrak{f}_{n,m}(k_{0},k-k_{0})=\mathrm{card} \bigg \{(i_{1},\ldots,i_{k_{0}},j_{1},\ldots,j_{k-k_{0}})\in\mathbb{Z}^{k}\mid 1\leq
i_{1}<\ldots<i_{k_{0}}\leq n, 1\leq j_{1}<\ldots<j_{k-k_{0}}\leq m+k-k_{0}-1,\sum\limits_{a=1}^{k_{0}}i_{a}+\sum\limits_{b=1}^{k-k_{0}}j_{b}=\bigg\lfloor\dfrac{k_{0}(n+1)+(k-k_{0})(m+1)}{2} \bigg\rfloor+\dfrac{(k-k_{0})(k-k_{0}-1)}{2}\bigg \} .$
\end{lem}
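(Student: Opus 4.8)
The plan is to compute $\mathrm{Ker}\, D_{k}$ from the $\mathfrak{sl}(2)$-module structure on $\bigwedge^{\bullet}\mathcal{I}^{\ast}$ recalled above and then rewrite the resulting weight-multiplicity count in the stated combinatorial form. First I would fix the monomial basis of $\bigwedge^{k}\mathcal{I}^{\ast}$: since the $X_{i}$ are even and the $Y_{j}$ odd we have $\bigwedge^{\bullet}\mathcal{I}^{\ast}=\bigwedge(\mathcal{I}^{\ast}_{0})\otimes S(\mathcal{I}^{\ast}_{1})$, so $\bigwedge^{k}\mathcal{I}^{\ast}$ is spanned by the monomials $f_{i_{1}}\wedge\cdots\wedge f_{i_{k_{0}}}\wedge g_{j_{1}}\cdots g_{j_{k-k_{0}}}$ with $1\le i_{1}<\cdots<i_{k_{0}}\le n$ and $1\le j_{1}\le\cdots\le j_{k-k_{0}}\le m$, for $0\le k_{0}\le k$. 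Next I would note that $D$ is the derivation of $\bigwedge^{\bullet}\mathcal{I}^{\ast}$ extending $x\oplus x'$ on $\mathcal{I}^{\ast}=\mathcal{I}^{\ast}_{0}\oplus\mathcal{I}^{\ast}_{1}$, that the derivations extending $h\oplus h'$ and $y\oplus y'$ complete it to an $\mathfrak{sl}(2)$-triple acting on $\bigwedge^{\bullet}\mathcal{I}^{\ast}$ (the bracket of the derivations extending $A,B$ being the derivation extending $[A,B]$), and that all three preserve the bidegree $(k_{0},k-k_{0})$. Writing $W_{k_{0}}=\bigwedge^{k_{0}}(\mathcal{I}^{\ast}_{0})\otimes S^{k-k_{0}}(\mathcal{I}^{\ast}_{1})$, this gives an $\mathfrak{sl}(2)$-module decomposition $\bigwedge^{k}\mathcal{I}^{\ast}=\bigoplus_{k_{0}=0}^{k}W_{k_{0}}$ with each $W_{k_{0}}$ completely reducible and $D$ acting on it as the raising operator, so $\mathrm{Ker}\, D_{k}=\bigoplus_{k_{0}=0}^{k}\mathrm{Ker}(D|_{W_{k_{0}}})$.

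The main point is the $\mathfrak{sl}(2)$ count on each $W_{k_{0}}$. Using $h(f_{i})=(n+1-2i)f_{i}$ and $h'(g_{j})=(m+1-2j)g_{j}$, the monomial above is a weight vector of weight $N_{k_{0}}-2S$, where $N_{k_{0}}=k_{0}(n+1)+(k-k_{0})(m+1)$ and $S=\sum_{a}i_{a}+\sum_{b}j_{b}$; in particular every weight occurring in $W_{k_{0}}$ has the fixed parity of $N_{k_{0}}$. For a finite-dimensional completely reducible $\mathfrak{sl}(2)$-module the kernel of the raising operator has dimension equal to the number of irreducible constituents, and each such constituent contains exactly one weight vector of weight $0$ if its highest weight is even and exactly one of weight $1$ if it is odd; hence $\dim\mathrm{Ker}(D|_{W_{k_{0}}})$ is the multiplicity in $W_{k_{0}}$ of the weight $w\in\{0,1\}$ with $w\equiv N_{k_{0}}\pmod 2$. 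Since the monomials form a weight basis, this multiplicity is the number of the monomials above satisfying $N_{k_{0}}-2S=w$, equivalently $S=\lfloor N_{k_{0}}/2\rfloor$.

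To conclude I would reindex the $g$-part: the map $(j_{1}\le\cdots\le j_{k-k_{0}})\mapsto(j_{1},\,j_{2}+1,\ldots,\,j_{k-k_{0}}+(k-k_{0}-1))$ is a bijection from nondecreasing sequences in $\{1,\ldots,m\}$ onto strictly increasing sequences in $\{1,\ldots,m+k-k_{0}-1\}$, raising $\sum_{b}j_{b}$ by $\frac{(k-k_{0})(k-k_{0}-1)}{2}$; under it the count just obtained becomes exactly $\mathfrak{f}_{n,m}(k_{0},k-k_{0})$ (with both sides equal to $0$ when $k_{0}>n$), and summing over $k_{0}=0,\ldots,k$ gives the lemma. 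I expect the only genuinely delicate step to be the $\mathfrak{sl}(2)$ argument of the second paragraph — namely verifying that $D$ really is the raising operator of the diagonal triple and that its kernel on a completely reducible module is governed by the weight-$0$ (or weight-$1$) multiplicity; the remaining steps are bookkeeping.
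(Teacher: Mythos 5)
Your proposal is correct and follows essentially the same route as the paper: realize $D_k$ as the raising operator of the diagonal $\mathfrak{sl}(2)$-action, use complete reducibility to identify $\dim\mathrm{Ker}\,D_k$ with the number of irreducible constituents, i.e.\ with the multiplicity of the weights $0$ and $1$, and count monomial weight vectors with $\sum i_a+\sum j_b=\lfloor N_{k_0}/2\rfloor$. You are in fact slightly more complete than the paper in two spots it glosses over: you isolate the bidegree components $W_{k_0}$ (on which the weights have fixed parity) before counting, and you make explicit the bijection from nondecreasing to strictly increasing $j$-sequences that produces the shift $\frac{(k-k_0)(k-k_0-1)}{2}$ appearing in the statement.
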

\begin{proof}
For $k\geq0$, set
\begin{align*}
\bigwedge^{k}\mathcal{I}^{\ast}=\bigoplus\limits_{i=1}^{r}V_{i},
\end{align*}
where $V_{1},\ldots,V_{r}$ are simple $\mathfrak{sl}(2)$-modules. Moreover,
$$\mathrm{Ker}\ D_{k}=\bigoplus_{i=1}^{r}\mathrm{Ker}\ D_{k}\bigcap V_{i}.$$

So we obtain that for any $v\in V_{i}$, $D_{k}(v)=0$ if and only if $v$ is a maximal vector of $V_{i}$. Moreover, the following conclusions hold:
\begin{equation}\label{1.3}
\mathrm{dim}\ \mathrm{Ker}\ D_{k}=r=\mathrm{dim}\ \left(\bigwedge^{k}\mathcal{I}^{\ast}\right)_{0}+\mathrm{dim}\ \left(\bigwedge^{k}\mathcal{I}^{\ast}\right)_{1},
\end{equation}
where $\left(\bigwedge^{k}\mathcal{I}^{\ast}\right)_{0}$, $\left(\bigwedge^{k}\mathcal{I}^{\ast}\right)_{1}$ are  weight spaces of weight $0$, $1$, respectively.

Let $\mathfrak{sl}(2)=\mathrm{span}_{\F}\{X_{+},H,X_{-}\}$, where $\F H$ is a Cartan subalgebra of $\mathfrak{sl}(2)$, and $f_{i_{1}}\wedge\ldots\wedge f_{i_{k_{0}}}\wedge g_{j_{1}}\wedge\ldots\wedge g_{j_{k-k_{0}}}$ is a standard basis of $\bigwedge^{k}\mathcal{I}^{\ast}$, where $0\leq k_{0}\leq k$, $i_{1}<\ldots<i_{k_{0}}\leq n$, $1\leq j_{1}\leq\ldots\leq j_{k-k_{0}}\leq m$. Note that

$$
H(f_{i_{1}}\wedge\ldots\wedge f_{i_{k_{0}}}\wedge g_{j_{1}}\wedge\ldots\wedge g_{j_{k-k_{0}}})=\left(k_{0}(n+1)+(k-k_{0})(m+1)-2\left( \sum_{a=1}^{k_{0}}i_{a}+\sum_{b=1}^{k-k_{0}}j_{b}\right)\right)$$
$$\quad \quad\quad\quad \quad\quad\quad\quad f_{i_{1}}\wedge\ldots\wedge f_{i_{k_{0}}}\wedge g_{j_{1}}\wedge\ldots\wedge g_{j_{k-k_{0}}}.
$$

From Eq. ($\ref{1.3}$), it is sufficient to calculate the dimensions of $\left(\bigwedge^{k}\mathcal{I}^{\ast}\right)_{0}$ and $\left(\bigwedge^{k}\mathcal{I}^{\ast}\right)_{1}$. We consider the following cases:

$\mathbf{Case\ 1:}$ $k_{0}(n+1)+(k-k_{0})(m+1)$ is even. Then

$$\left(k_{0}(   n+1)+(k-k_{0})(m+1)-2\left( \sum_{a=1}^{k_{0}}i_{a}+\sum_{b=1}^{k-k_{0}}j_{b}\right)\right)=0,$$ if and only if $\sum\limits_{a=1}^{k_{0}}i_{a}+\sum\limits_{b=1}^{k-k_{0}}j_{b}=\dfrac{k_{0}(n+1)+(k-k_{0})(m+1)}{2}$.

$\mathbf{Case\ 2:}$ $k_{0}(n+1)+(k-k_{0})(m+1)$ is odd. Then

$$\left(k_{0}(   n+1)+(k-k_{0})(m+1)-2\left( \sum_{a=1}^{k_{0}}i_{a}+\sum_{b=1}^{k-k_{0}}j_{b}\right)\right)=1,$$ if and only if $\sum\limits_{a=1}^{k_{0}}i_{a}+\sum\limits_{b=1}^{k-k_{0}}j_{b}=\dfrac{k_{0}(n+1)+(k-k_{0})(m+1)-1}{2}$.

Thus, the conclusion holds.
\end{proof}

\begin{thm} \label{2.1.4}
Suppose  $k\geq0$. Then
$$\mathrm{dim}\ \mathrm{H}^{k}(L_{n,m})=\sum\limits_{k_{0}=0}^{k}\mathfrak{f}_{n,m}(k_{0},k-k_{0})+\sum\limits_{k_{0}=0}^{k-1}\mathfrak{f}_{n,m}(k_{0},k-k_{0}-1).$$
\end{thm}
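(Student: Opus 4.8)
The plan is to read off $\dim\mathrm{H}^{k}(L_{n,m})$ directly from the direct-sum splitting of Lemma \ref{2.1.1}, which already exhibits $\mathrm{H}^{k}(L_{n,m})$ as $\mathrm{Ker}\ D_{k}$ plus a piece built from the cokernel of $D_{k-1}$; the whole statement then reduces to the two numbers $\dim\mathrm{Ker}\ D_{k}$ and $\dim\mathrm{Ker}\ D_{k-1}$, which are supplied by Lemma \ref{2.1.3}.

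Concretely, taking dimensions in
$$\mathrm{H}^{k}(L_{n,m})=\mathrm{Ker}\ D_{k}\bigoplus\left(\mathbb{F}X_{0}^{\ast}\bigwedge\left(\bigwedge^{k-1}\mathcal{I}^{\ast}/\mathrm{Im}\ D_{k-1}\right)\right)$$
gives $\dim\mathrm{H}^{k}(L_{n,m})=\dim\mathrm{Ker}\ D_{k}+\dim\left(\mathbb{F}X_{0}^{\ast}\bigwedge(\bigwedge^{k-1}\mathcal{I}^{\ast}/\mathrm{Im}\ D_{k-1})\right)$. Since $X_{0}^{\ast}$ is independent of the basis of $\mathcal{I}^{\ast}$, left exterior multiplication by $X_{0}^{\ast}$ is injective on $\bigwedge^{\bullet}\mathcal{I}^{\ast}$, so the second summand has the same dimension as $\bigwedge^{k-1}\mathcal{I}^{\ast}/\mathrm{Im}\ D_{k-1}$, namely $\dim\bigwedge^{k-1}\mathcal{I}^{\ast}-\dim\mathrm{Im}\ D_{k-1}$. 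Now $D$ preserves the exterior $\mathbb{Z}$-grading, so $D_{k-1}$ is an \emph{endomorphism} of the finite-dimensional space $\bigwedge^{k-1}\mathcal{I}^{\ast}$; hence its domain and codomain have equal dimension and the rank--nullity theorem gives $\dim\mathrm{Im}\ D_{k-1}=\dim\bigwedge^{k-1}\mathcal{I}^{\ast}-\dim\mathrm{Ker}\ D_{k-1}$. Therefore $\dim(\bigwedge^{k-1}\mathcal{I}^{\ast}/\mathrm{Im}\ D_{k-1})=\dim\mathrm{Ker}\ D_{k-1}$, and so $\dim\mathrm{H}^{k}(L_{n,m})=\dim\mathrm{Ker}\ D_{k}+\dim\mathrm{Ker}\ D_{k-1}$, with the convention $\dim\mathrm{Ker}\ D_{-1}=0$ that subsumes the case $k=0$.

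It then remains only to substitute $\dim\mathrm{Ker}\ D_{k}=\sum_{k_{0}=0}^{k}\mathfrak{f}_{n,m}(k_{0},k-k_{0})$ and $\dim\mathrm{Ker}\ D_{k-1}=\sum_{k_{0}=0}^{k-1}\mathfrak{f}_{n,m}(k_{0},k-k_{0}-1)$ from Lemma \ref{2.1.3}, which yields the asserted formula verbatim. There is no serious obstacle here; the only points that should not be glossed over are the degree-preserving property of $D$ (which is precisely what makes the kernel and cokernel dimensions of $D_{k-1}$ agree) and the bookkeeping of the empty-sum boundary case $k=0$.
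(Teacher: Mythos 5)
Your proof is correct and follows essentially the same route as the paper, which simply cites Lemmas \ref{2.1.1} and \ref{2.1.3}; you have merely made explicit the rank--nullity step showing that $\dim\left(\bigwedge^{k-1}\mathcal{I}^{\ast}/\mathrm{Im}\ D_{k-1}\right)=\dim\mathrm{Ker}\ D_{k-1}$, which the paper leaves implicit. The observations that $D$ preserves the exterior degree and that wedging with $X_{0}^{\ast}$ is injective are exactly the right details to record.
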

\begin{proof}
It follows from Lemmas \ref{2.1.1} and \ref{2.1.3}.
\end{proof}

In order to characterize  the superalgebra structure of $\mathrm{H}^{\bullet}(L_{n,m})$, we make a $\mathbb{Z}$-gradation of $\bigwedge^{k}\mathcal{I}^{\ast}$ for any $k\geq0$.

For any $0\leq s\leq k$, let $l$ such that $\alpha_{k,s}\leq l\leq \beta_{k,s}$, where $\alpha_{k,s}=\frac{s(s+1)}{2}+k-s$, $\beta_{k,s}=\frac{s(2n-s+1)}{2}+(k-s)m$.

Let $\left(\bigwedge^{k}\mathcal{I}^{\ast}\right)^{l}_{s}$ be the space spanned by
$$X_{i_{1}}^{\ast}\wedge\ldots \wedge X_{i_{s}}^{\ast}\wedge Y^{\ast^{\alpha_{1}}}_{1}\wedge\ldots \wedge Y^{\ast^{\alpha_{m}}}_{m},$$
where $1\leq i_{1}<\ldots< i_{s}\leq n,$ and $\alpha_{1},\ldots,\alpha_{m}\geq 0,$
satisfying that
$$\sum\limits_{j=1}^{m}\alpha_{j}=k-s,\quad \sum\limits_{a=1}^{s}i_{a}+\sum\limits_{b=1}^{m}\alpha_{b}b=l.$$

Let
\begin{align*}
D_{s}^{k,l}: \left(\bigwedge^{k}\mathcal{I}^{\ast}\right)^{l}_{s}&\longrightarrow \left(\bigwedge^{k}\mathcal{I}^{\ast}\right)^{l-1}_{s},\\
x&\longmapsto D_{k}(x).
\end{align*}

Obviously,
$$\bigwedge^{k}\mathcal{I}^{\ast}=\bigoplus_{s=0}^{k}\bigoplus_{l=\alpha_{k,s}}^{\beta_{k,s}}\left(\bigwedge^{k}\mathcal{I}^{\ast}\right)^{l}_{s}.$$

Since the linear mapping $D_{k}$ is compatible with this graduation, we have:
\begin{eqnarray}
&&\mathrm{Ker}\ D_{k}=\bigoplus_{s=0}^{k}\bigoplus_{l=\alpha_{k,s}}^{\beta_{k,s}}\mathrm{Ker}\ D^{k,l}_{s},\\
&&\mathrm{Im}\  D_{k}=\bigoplus_{s=0}^{k}\bigoplus_{l=\alpha_{k,s}}^{\beta_{k,s}}\mathrm{Im}\ D^{k,l}_{s},
\end{eqnarray}
where $\alpha_{k,s}=\frac{s(s+1)}{2}+k-s$, $\beta_{k,s}=\frac{s(2n-s+1)}{2}+(k-s)m$.

We describe the superalgebra structures of the cohomology for some low-dimensional filiform Lie superalgebras by the decomposition in Theorem \ref{2.1.2}.

\begin{exa}
The following $\mathbb{Z}$-graded superalgebra isomorphism holds:
$$\mathrm{H}^{\bullet}(L_{1,2})\cong \mathcal{U}_{1,2}\ltimes \mathcal{V}_{1,2},$$
where $\mathcal{U}_{1,2}$ is an infinite-dimensional $\mathbb{Z}$-graded superalgebra with a $\mathbb{Z}_{2}$-homogeneous basis
$$\{\alpha_{1,i},\alpha_{i}\mid i\geq 0\},$$
satisfying that $|\alpha_{1,i}|=|\alpha_{i}|=\overline{i}$ (mod 2), $\|\alpha_{1,i}\|=i+1$, $\|\alpha_{i}\|=i$, and the multiplication is given by
$$\alpha_{i}\alpha_{j}=\alpha_{i+j},\quad \alpha_{1,i}\alpha_{j}=\alpha_{1,i+j},\ i,j\geq 0.$$
$\mathcal{V}_{1,2}$ is an infinite-dimensional $\mathbb{Z}$-graded superalgebra with a $\mathbb{Z}_{2}$-homogeneous basis
$$\{\alpha_{0,i},\alpha_{0,1,i}\mid i\geq 0\},$$
satisfying that $|\alpha_{0,i}|=|\alpha_{0,1,i}|=\overline{i}$ (mod 2), $\|\alpha_{0,i}\|=i+1$, $\|\alpha_{0,1,i}\|=i+2$, and the trivial multiplication. The multiplication between $\mathcal{U}_{1,2}$ and $\mathcal{V}_{1,2}$ is graded-supercommutative, and the multiplication is given by
$$\alpha_{0,i}\alpha_{1,0}=(-1)^{i}\alpha_{0,1,i},\quad\alpha_{0,i}\alpha_{0}=\alpha_{0,i}, \quad \alpha_{0,1,i}\alpha_{0}=\alpha_{0,1,i},\ i\geq0.$$
\begin{proof}
Note that
\begin{equation*}
\left(\bigwedge^{k}\mathcal{I}^{\ast}\right)^{l}_{s}=\left\{
                \begin{array}{ll}
                  \F Y^{\ast^{2k-l}}_{1}\wedge Y^{\ast^{l-k}}_{2}, & \hbox{$s=0$;} \\
               \F X_{1}^{\ast}\wedge Y^{\ast^{2k-l-1}}_{1}\wedge Y^{\ast^{l-k}}_{2}, & \hbox{$s=1;$} \\
                   0, & \hbox{else.} \\
                \end{array}
              \right.
\end{equation*}
Moreover, we have
\begin{equation*}
\mathrm{Ker}\ D^{k,l}_{s}=\left\{
                \begin{array}{ll}
                  \F Y^{\ast^{k}}_{1}, & \hbox{$s=0,l=k$;} \\
               \F X_{1}^{\ast}\wedge Y^{\ast^{k-1}}_{1}, & \hbox{$s=1,l=k;$} \\
                   0, & \hbox{else.} \\
                \end{array}
              \right.
\end{equation*}
\begin{equation*}
\mathrm{Im}\ D^{k,l}_{s}=\left\{
                \begin{array}{ll}
                  \F Y^{\ast^{2k-l+1}}_{1}\wedge Y^{\ast^{l-k-1}}_{2}, & \hbox{$s=0,k+1\leq l\leq2k$;} \\
               \F X_{1}^{\ast}\wedge Y^{\ast^{2k-l}}_{1}\wedge Y^{\ast^{l-k-1}}_{2}, & \hbox{$s=1,k+1\leq l\leq 2k-1;$} \\
                   0, & \hbox{else.} \\
                \end{array}
              \right.
\end{equation*}
From Theorem \ref{2.1.2} and Eqs. (3.1.2), (3.1.3), $\mathrm{H}^{\bullet}(L_{1,2})$ has a basis:
$$Y_{1}^{\ast^{i}},\quad X_{1}^{\ast}\wedge Y_{1}^{\ast^{i}},\quad X_{0}^{\ast}\wedge Y_{2}^{\ast^{i}},\quad X_{0}^{\ast}\wedge X_{1}^{\ast}\wedge Y_{2}^{\ast^{i}}, \ i\geq 0.$$
The conclusion can be obtained by a direct calculation.
\end{proof}
\end{exa}

\begin{exa}
The following $\mathbb{Z}$-graded superalgebra isomorphism holds:
$$\mathrm{H}^{\bullet}(L_{2,1})\cong \mathcal{U}_{2,1}\ltimes \mathcal{V}_{2,1},$$
where $\mathcal{U}_{2,1}$ is an infinite-dimensional $\mathbb{Z}$-graded superalgebra with a $\mathbb{Z}_{2}$-homogeneous basis
$$\{\alpha_{i},\alpha_{1,i},\alpha_{1,2,i}\mid i\geq 0\},$$
satisfying that $|\alpha_{i}|=|\alpha_{1,i}|=|\alpha_{1,2,i}|=\overline{i}$ (mod 2), $\|\alpha_{i}\|=i$, $\|\alpha_{1,i}\|=i+1$, $\|\alpha_{1,2,i}\|=i+2$, and the multiplication is given by
$$\alpha_{i}\alpha_{j}=\alpha_{i+j},\quad \alpha_{1,i}\alpha_{j}=\alpha_{1,i+j},\quad \alpha_{1,2,i}\alpha_{j}=\alpha_{1,2,i+j},\ i,j\geq 0.$$
$\mathcal{V}_{2,1}$ is an infinite-dimensional $\mathbb{Z}$-graded superalgebra with a $\mathbb{Z}_{2}$-homogeneous basis
$$\{\alpha_{0,i},\alpha_{0,2,i},\alpha_{0,1,2,i}\mid i\geq 0\},$$
satisfying that $|\alpha_{0,i}|=|\alpha_{0,2,i}|=|\alpha_{0,1,2,i}|=\overline{i}$ (mod 2), $\|\alpha_{0,i}\|=i+1$, $\|\alpha_{0,2,i}\|=i+2$, $\|\alpha_{0,1,2,i}\|=i+3$, and the trivial multiplication. The multiplication between $\mathcal{U}_{2,1}$ and $\mathcal{V}_{2,1}$ is graded-supercommutative, and the multiplication is given by
$$\alpha_{0,i}\alpha_{j}=\alpha_{0,i+j},\quad\alpha_{0,2,i}\alpha_{j}=\alpha_{0,2,i+j}, \quad \alpha_{0,1,2,i}\alpha_{j}=\alpha_{0,1,2,i+j},$$
$$\alpha_{0,2,i}\alpha_{1,j}=(-1)^{i+1}\alpha_{0,1,2,i+j},\quad\alpha_{0,i}\alpha_{1,2,j}=\alpha_{0,1,2,i+j},\ i,j\geq0.$$
\begin{proof}
Note that
\begin{equation*}
\left(\bigwedge^{k}\mathcal{I}^{\ast}\right)^{l}_{s}=\left\{
                \begin{array}{ll}
                  \F Y^{\ast^{l}}_{1}, & \hbox{$s=0$;} \\
               \F X_{1}^{\ast}\wedge Y^{\ast^{l-1}}_{1}, & \hbox{$s=1,l=k;$} \\
               \F X_{2}^{\ast}\wedge Y^{\ast^{l-2}}_{1}, & \hbox{$s=1,l=k+1;$} \\
               \F X_{1}^{\ast}\wedge X_{2}^{\ast}\wedge Y^{\ast^{l-3}}_{1}, & \hbox{$s=2,l=k+1;$} \\
                   0, & \hbox{else.} \\
                \end{array}
              \right.
\end{equation*}
Moreover, we have
\begin{equation*}
\mathrm{Ker}\ D^{k,l}_{s}=\left\{
                \begin{array}{ll}
                  \F Y^{\ast^{l}}_{1}, & \hbox{$s=0$;}\\
               \F X_{1}^{\ast}\wedge Y^{\ast^{l-1}}_{1}, & \hbox{$s=1,l=k;$} \\
               \F X_{1}^{\ast}\wedge X_{2}^{\ast}\wedge Y^{\ast^{l-3}}_{1}, & \hbox{$s=2,l=k+1;$} \\
                   0, & \hbox{else.} \\
                \end{array}
              \right.
\end{equation*}
\begin{equation*}
\mathrm{Im}\ D^{k,l}_{s}=\left\{
                \begin{array}{ll}
                  \F X^{\ast}_{1}\wedge Y^{\ast^{l-2}}_{1}, & \hbox{$s=1,l=k+1$;} \\
                 0, & \hbox{else.} \\
                \end{array}
              \right.
\end{equation*}
From Theorem \ref{2.1.2} and Eqs. (3.1.2), (3.1.3), $\mathrm{H}^{\bullet}(L_{2,1})$ has a basis:
$$Y_{1}^{\ast^{i}},\quad X_{1}^{\ast}\wedge Y_{1}^{\ast^{i}},\quad X_{1}^{\ast}\wedge X_{2}^{\ast}\wedge Y^{\ast^{i}}_{1},\quad X_{0}^{\ast}\wedge Y_{1}^{\ast^{i}},\quad X_{0}^{\ast}\wedge X_{2}^{\ast}\wedge Y_{1}^{\ast^{i}},\quad X_{0}^{\ast}\wedge X_{1}^{\ast} \wedge X_{2}^{\ast}\wedge Y_{1}^{\ast^{i}},\ i\geq 0.$$
The conclusion can be obtained by a direct calculation.
\end{proof}
\end{exa}

\begin{exa}
The following $\mathbb{Z}$-graded superalgebra isomorphism holds:
$$\mathrm{H}^{\bullet}(L_{1,3})\cong \mathcal{U}_{1,3}\ltimes \mathcal{V}_{1,3},$$
where $\mathcal{U}_{1,3}$ is an infinite-dimensional $\mathbb{Z}$-graded superalgebra with a $\mathbb{Z}_{2}$-homogeneous basis
$$\{\alpha_{i,j},\alpha_{1,i,j},\mid i,j\geq 0\},$$
satisfying that $|\alpha_{i,j}|=|\alpha_{1,i,j}|=\overline{2j+i}$ (mod 2), $\|\alpha_{i,j}\|=2j+i$, $\|\alpha_{1,i,j}\|=2j+i+1$, and the multiplication is given by
$$\alpha_{i,j}\alpha_{i',j'}=\alpha_{i+i',j+j'},\quad \alpha_{1,i,j}\alpha_{i',j'}=\alpha_{1,i+i',j+j'},\ i,i',j,j'\geq 0.$$
$\mathcal{V}_{1,3}$ is an infinite-dimensional $\mathbb{Z}$-graded superalgebra with a $\mathbb{Z}_{2}$-homogeneous basis
$$\{\alpha_{0,i,j},\alpha_{0,1,i,j}\mid i,j\geq 0\},$$
satisfying that $|\alpha_{0,i,j}|=|\alpha_{0,1,i,j}|=\overline{2i+j}$ (mod 2), $\|\alpha_{0,i,j}\|=2i+j+1$, $\|\alpha_{0,1,i,j}\|=2i+j+2$, and the trivial multiplication. The multiplication between $\mathcal{U}_{1,3}$ and $\mathcal{V}_{1,3}$ is graded-supercommutative, and the multiplication is given by
$$\alpha_{0,i,j}\alpha_{i',j'}=\rho_{i,j,i',j'}\alpha_{0,i+i'+j',j-i'},$$
$$\alpha_{0,i,j}\alpha_{1,i',j'}=(-1)^{2i+j}
\alpha_{0,1,i,j}\alpha_{i',j'}=(-1)^{2i+j}\rho_{i,j,i',j'}\alpha_{0,i+i'+j',j-i'},\ i,j\geq0,$$
where $\rho_{i,j,i',j'}=\prod\limits_{s=0}^{i'-1}\frac{s-j}{2(i+j'+s)+1}+\sum\limits_{t=i'+1}^{i'+j'}\left(\prod\limits_{a=1}^{t-i'}\frac{2(a-1-j')}{a}\prod\limits_{b=0}^{t-1}\frac{b-j-t+i'}{2(i+i'+j'-t+b)+1}\right)$.
\begin{proof}
Note that
\begin{equation*}
\left(\bigwedge^{k}\mathcal{I}^{\ast}\right)^{l}_{s}=\left\{
                \begin{array}{ll}
                  \bigoplus\limits_{i=2k-l}^{\big\lfloor\frac{3k-l}{2}\big\rfloor}\F Y^{i}_{1}\wedge Y^{3k-l-2i}_{2}\wedge Y^{i+l-2k}_{3}, & \hbox{$s=0, k\leq l\leq 2k$;} \\
               \bigoplus\limits_{i=0}^{\big\lfloor\frac{3k-l}{2}\big\rfloor}\F Y^{i}_{1}\wedge Y^{3k-l-2i}_{2}\wedge Y^{i+l-2k}_{3}, & \hbox{$s=0, 2k+1 \leq l\leq 3k$} \\
               \bigoplus\limits_{i=2k-l-1}^{\big\lfloor\frac{3k-l}{2}\big\rfloor-1}\F X_{1}\wedge Y^{i}_{1}\wedge Y^{3k-l-2i-2}_{2}\wedge Y^{i+l-2k+1}_{3}, & \hbox{$s=1,k\leq l\leq 2k-1;$} \\
               \bigoplus\limits_{i=0}^{\big\lfloor\frac{3k-l}{2}\big\rfloor-1}\F X_{1}\wedge Y^{i}_{1}\wedge Y^{3k-l-2i-2}_{2}\wedge Y^{i+l-2k+1}_{3}, & \hbox{$s=1,2k\leq l\leq 3k-2;$} \\
                   0, & \hbox{else.} \\
                \end{array}
              \right.
\end{equation*}
From a direct computation, we have the following conclusion:

If $2k+1\leq l\leq 3k$, or $k\leq l\leq 2k$ and $3k-l$ is odd, we have $\mathrm{Ker}\ D^{k,l}_{0}=0$.

If $k\leq l\leq 2k$, and $3k-l$ is even, we have $\mathrm{dim}\ \mathrm{Ker}\ D^{k,l}_{0}=1$ and  $\mathrm{Ker}\ D^{k,l}_{0}$ has a basis:
$$Y^{\ast^{2k-l}}_{1}\wedge Y^{\ast^{l-k}}_{2}+\sum\limits_{i=2k-l+1}^{\frac{3k-l}{2}}\lambda_{i}^{k,l} Y^{\ast^{i}}_{1}\wedge Y^{\ast^{3k-l-2i}}_{2}\wedge Y^{\ast^{i+l-2k}}_{3},$$
where $\lambda_{i}^{k,l}=\prod\limits_{j=1}^{i-2k+l}\frac{2(j-1)+k-l}{j}$. Moreover, we have

\begin{equation*}
\left(\bigwedge^{k}\mathcal{I}^{\ast}\right)^{l-1}_{0}/\mathrm{Im}\ D^{k,l}_{0}=\left\{
                \begin{array}{ll}
                  \F Y^{\ast^{3k-l+1}}_{2}\wedge Y^{\ast^{l-2k-1}}_{3}, & \hbox{$2k+1\leq l\leq3k$ and $3k-l$ is odd;} \\
                 0, & \hbox{else.} \\
                \end{array}
              \right.
\end{equation*}
From Theorem \ref{2.1.2} and Eqs. (3.1.2), (3.1.3), $\mathrm{H}^{\bullet}(L_{1,3})$ has a basis:
$$Y_{1}^{\ast^{i}}\wedge Y_{2}^{\ast^{2j}}+\sum_{s=i+1}^{i+j}\lambda_{s}^{2j+i,4j+i}Y_{1}^{\ast^{s}}\wedge Y_{2}^{\ast^{2(i+j-s)}}\wedge Y_{3}^{\ast^{s-i}},$$
$$X_{1}^{\ast}\wedge Y_{1}^{\ast^{i}}\wedge Y_{2}^{\ast^{2j}}+\sum_{s=i+1}^{i+j}\lambda_{s}^{2j+i,4j+i}X_{1}^{\ast}\wedge Y_{1}^{\ast^{s}}\wedge Y_{2}^{\ast^{2(i+j-s)}}\wedge Y_{3}^{\ast^{s-i}},$$
$$X_{0}^{\ast}\wedge Y_{2}^{\ast^{2i}}\wedge Y_{3}^{\ast^{j}},\ X_{0}^{\ast}\wedge X_{1}^{\ast}\wedge Y_{2}^{\ast^{2i}}\wedge Y_{3}^{\ast^{j}},\ i,j\geq 0.$$
The conclusion can be obtained by a direct calculation.
\end{proof}
\end{exa}
\begin{exa}
The following $\mathbb{Z}$-graded superalgebra isomorphism holds:
$$\mathrm{H}^{\bullet}(L_{3,1})\cong \mathcal{U}_{3,1}\ltimes \mathcal{V}_{3,1},$$
where $\mathcal{U}_{3,1}$ is an infinite-dimensional $\mathbb{Z}$-graded superalgebra with a $\mathbb{Z}_{2}$-homogeneous basis
$$\{\alpha_{i},\alpha_{1,i},\alpha_{1,2,i},\alpha_{1,2,3,i}\mid i\geq 0\},$$
satisfying that $|\alpha_{i}|=|\alpha_{1,i}|=|\alpha_{1,2,i}|=|\alpha_{1,2,3,i}|=\overline{i}$ (mod 2), $\|\alpha_{i}\|=i$, $\|\alpha_{1,i}\|=i+1$, $\|\alpha_{1,2,i}\|=i+2$, $\|\alpha_{1,2,3,i}\|=i+3$, and the multiplication is given by
$$\alpha_{i}\alpha_{j}=\alpha_{i+j},\quad \alpha_{1,i}\alpha_{j}=\alpha_{1,i+j},\quad \alpha_{1,2,i}\alpha_{j}=\alpha_{1,2,i+j},\quad \alpha_{1,2,3,i}\alpha_{j}=\alpha_{1,2,3,i+j},\ i,j\geq 0.$$
$\mathcal{V}_{3,1}$ is an infinite-dimensional $\mathbb{Z}$-graded superalgebra with a $\mathbb{Z}_{2}$-homogeneous basis
$$\{\alpha_{0,i},\alpha_{0,3,i},\alpha_{0,2,3,i},\alpha_{0,1,2,3,i}\mid i\geq 0\},$$
satisfying that $|\alpha_{0,i}|=|\alpha_{0,3,i}|=|\alpha_{0,2,3,i}|=|\alpha_{0,1,2,3,i}|=\overline{i}$ (mod 2), $\|\alpha_{0,i}\|=i+1$, $\|\alpha_{0,3,i}\|=i+2$, $\|\alpha_{0,2,3,i}\|=i+3$, $\|\alpha_{0,1,2,3,i}\|=i+4$, and the trivial multiplication. The multiplication between $\mathcal{U}_{3,1}$ and $\mathcal{V}_{3,1}$ is graded-supercommutative, and the multiplication is given by
$$\alpha_{0,i}\alpha_{j}=\alpha_{0,i+j},\quad\alpha_{0,3,i}\alpha_{j}=\alpha_{0,3,i+j}, \quad \alpha_{0,2,3,i}\alpha_{j}=\alpha_{0,2,3,i+j},\quad \alpha_{0,1,2,3,i}\alpha_{j}=\alpha_{0,1,2,3,i+j},$$
$$ \alpha_{0,2,3,i}\alpha_{1,j}= \alpha_{0,i}\alpha_{1,2,3,j}=(-1)^{i}\alpha_{0,3,i}\alpha_{1,2,j}=(-1)^{i}\alpha_{0,1,2,3,i+j},\ i,j\geq0.$$
\begin{proof}
Note that
\begin{equation*}
\left(\bigwedge^{k}\mathcal{I}^{\ast}\right)^{l}_{s}=\left\{
                \begin{array}{ll}
                  \F Y^{\ast^{l}}_{1}, & \hbox{$s=0$;} \\
               \F X_{1}^{\ast}\wedge Y^{\ast^{l-1}}_{1}, & \hbox{$s=1,l=k;$} \\
               \F X_{2}^{\ast}\wedge Y^{\ast^{l-2}}_{1}, & \hbox{$s=1,l=k+1;$} \\
                \F X_{3}^{\ast}\wedge Y^{\ast^{l-3}}_{1}, & \hbox{$s=1,l=k+2;$} \\
            \F X_{1}^{\ast}\wedge X_{2}^{\ast}\wedge Y^{\ast^{l-3}}_{1}, & \hbox{$s=2,l=k+1;$} \\
               \F X_{1}^{\ast}\wedge X_{3}^{\ast}\wedge Y^{\ast^{l-4}}_{1}, & \hbox{$s=2,l=k+2;$} \\
                \F X_{2}^{\ast}\wedge X_{3}^{\ast}\wedge Y^{\ast^{l-5}}_{1}, & \hbox{$s=2,l=k+3;$} \\
               \F X_{1}^{\ast}\wedge X_{2}^{\ast}\wedge X_{3}^{\ast}\wedge Y^{\ast^{l-6}}_{1}, & \hbox{$s=3,l=k+3;$} \\
                   0, & \hbox{else.} \\
                \end{array}
              \right.
\end{equation*}
Moreover, we have
\begin{equation*}
\mathrm{Ker}\ D^{k,l}_{s}=\left\{
                \begin{array}{ll}
                  \F Y^{\ast^{l}}_{1}, & \hbox{$s=0$;} \\
               \F X_{1}^{\ast}\wedge Y^{\ast^{l-1}}_{1}, & \hbox{$s=1,l=k;$} \\
            \F X_{1}^{\ast}\wedge X_{2}^{\ast}\wedge Y^{\ast^{l-3}}_{1}, & \hbox{$s=2,l=k+1;$} \\
               \F X_{1}^{\ast}\wedge X_{2}^{\ast}\wedge X_{3}^{\ast}\wedge Y^{\ast^{l-6}}_{1}, & \hbox{$s=3,l=k+3;$} \\
                   0, & \hbox{else.} \\
                \end{array}
              \right..
\end{equation*}
\begin{equation*}
\mathrm{Im}\ D^{k,l}_{s}=\left\{
                \begin{array}{ll}
               \F X_{1}^{\ast}\wedge Y^{\ast^{l-2}}_{1}, & \hbox{$s=1,l=k+1;$} \\
                \F X_{2}^{\ast}\wedge Y^{\ast^{l-3}}_{1}, & \hbox{$s=1,l=k+2;$} \\
               \F X_{1}^{\ast}\wedge X_{2}^{\ast}\wedge Y^{\ast^{l-4}}_{1}, & \hbox{$s=2,l=k+2;$} \\
                \F X_{1}^{\ast}\wedge X_{3}^{\ast}\wedge Y^{\ast^{l-5}}_{1}, & \hbox{$s=2,l=k+3;$} \\              0, & \hbox{else.} \\
                \end{array}
              \right..
\end{equation*}
From Theorem \ref{2.1.2} and Eqs. (3.1.2), (3.1.3), $\mathrm{H}^{\bullet}(L_{3,1})$ has a basis:
$$Y_{1}^{\ast^{i}},\quad X_{1}^{\ast}\wedge Y_{1}^{\ast^{i}},\quad X_{1}^{\ast}\wedge X_{2}^{\ast}\wedge Y_{1}^{\ast^{i}},\quad X_{1}^{\ast}\wedge X_{2}^{\ast}\wedge X_{3}^{\ast}\wedge Y_{1}^{\ast^{i}},$$
$$ X_{0}^{\ast}\wedge Y_{1}^{\ast^{i}},\quad X_{0}^{\ast}\wedge X_{3}^{\ast}\wedge Y_{1}^{\ast^{i}}, \quad X_{0}^{\ast}\wedge X_{2}^{\ast}\wedge X_{3}^{\ast}\wedge Y_{1}^{\ast^{i}},\quad X_{0}^{\ast}\wedge X_{1}^{\ast}\wedge X_{2}^{\ast}\wedge X_{3}^{\ast}\wedge Y_{1}^{\ast^{i}},\ i\geq 0.$$
The conclusion can be obtained by a direct calculation.
\end{proof}
\end{exa}

\begin{exa}
The following $\mathbb{Z}$-graded superalgebra isomorphism holds:
$$\mathrm{H}^{\bullet}(L_{2,2})\cong \mathcal{U}_{2,2}\ltimes \mathcal{V}_{2,2},$$
where $\mathcal{U}_{2,2}$ is an infinite-dimensional $\mathbb{Z}$-graded superalgebra with a $\mathbb{Z}_{2}$-homogeneous basis
$$\{\alpha_{i},\alpha_{1,i},\alpha_{1,2,i},\beta_{i}\mid i\geq 0\},$$
satisfying that $|\alpha_{i}|=|\alpha_{1,i}|=|\alpha_{1,2,i}|=\overline{i}$ (mod 2), $|\beta_{i}|=\overline{i+1}$ (mod 2), $\|\alpha_{i}\|=i$, $\|\alpha_{1,i}\|=i+1$, $\|\alpha_{1,2,i}\|=\|\beta_{i}\|=i+2$, and the multiplication is given by
$$\alpha_{i}\alpha_{j}=\alpha_{i+j},\quad \alpha_{1,i}\alpha_{j}=\alpha_{1,i+j},\quad \alpha_{1,2,i}\alpha_{j}=\alpha_{1,2,i+j},$$
$$\beta_{i}\alpha_{j}=\beta_{i+j},\quad \alpha_{1,i}\beta_{j}=(-1)^{i+1}\alpha_{1,2,i+j+1},\ i,j\geq 0.$$
$\mathcal{V}_{2,2}$ is an infinite-dimensional $\mathbb{Z}$-graded superalgebra with a $\mathbb{Z}_{2}$-homogeneous basis
$$\{\alpha_{0,i},\alpha_{0,1,i},\alpha_{0,2,i},\alpha_{0,1,2,i}\mid i\geq 0\},$$
satisfying that $|\alpha_{0,i}|=|\alpha_{0,2,i}|=|\alpha_{0,1,2,i}|=\overline{i}$ (mod 2), $|\alpha_{0,1,i}|=\overline{i+1}$ (mod 2), $\|\alpha_{0,i}\|=i+1$, $\|\alpha_{0,2,i}\|=i+2$, $\|\alpha_{0,1,i}\|=\|\alpha_{0,1,2,i}\|=i+3$, and the trivial multiplication. The multiplication between $\mathcal{U}_{2,2}$ and $\mathcal{V}_{2,2}$ is graded-supercommutative, and the multiplication is given by
$$\alpha_{0,i}\alpha_{0}=\alpha_{0,i},\quad\alpha_{0,1,i}\alpha_{0}=\alpha_{0,1,i}, \quad \alpha_{0,2,i}\alpha_{0}=\alpha_{0,2,i},\quad \alpha_{0,1,2,i}\alpha_{0}=\alpha_{0,1,2,i},$$
$$ \alpha_{0,2,i}\alpha_{1,0}= (-1)^{i+1}\alpha_{0,i}\alpha_{1,2,0}=(-1)^{i+1}\alpha_{0,1,2,i},$$
$$ \alpha_{0,2,i}\beta_{0}= (-1)^{i+1}\alpha_{0,1,2,i+1},\quad \alpha_{0,i+1}\alpha_{1,0}= (-1)^{i+1}\alpha_{0,1,i},$$
$$ \alpha_{0,i}\beta_{0}= (-1)^{i}\frac{i+2}{i+1}\alpha_{0,1,i},\quad \alpha_{0,2,i}\alpha_{1}= -\frac{1}{i+1}\alpha_{0,1,i},\ i\geq0.$$
\begin{proof}
Note that
\begin{equation*}
\left(\bigwedge^{k}\mathcal{I}^{\ast}\right)^{l}_{s}=\left\{
                \begin{array}{ll}
                  \F Y^{\ast^{2k-l}}_{1}\wedge Y^{\ast^{l-k}}_{2}, & \hbox{$s=0$;} \\
               \F X_{1}^{\ast}\wedge Y^{\ast^{2k-l-1}}_{1}\wedge Y^{\ast^{l-k}}_{2}\bigoplus\F X_{2}^{\ast}\wedge Y^{\ast^{2k-l}}_{1}\wedge Y^{\ast^{l-k-1}}_{2}, & \hbox{$s=1;$} \\
               \F X_{1}^{\ast}\wedge X_{2}^{\ast}\wedge Y^{\ast^{2k-l-1}}_{1}\wedge Y^{\ast^{l-k-1}}_{2}, & \hbox{$s=2;$} \\
                   0, & \hbox{else.} \\
                \end{array}
              \right.
\end{equation*}
Moreover, we have
\begin{equation*}
\mathrm{Ker}\ D^{k,l}_{s}=\left\{
                \begin{array}{ll}
                  \F Y^{\ast^{l}}_{1}, & \hbox{$s=0,l=k$;} \\
                 \F X_{1}^{\ast}\wedge Y^{\ast^{l-1}}_{1}, & \hbox{$s=1,l=k;$} \\
               \F\left(X_{1}^{\ast}\wedge Y^{\ast^{l-3}}_{1}\wedge Y^{\ast}_{2}-X_{2}^{\ast}\wedge Y^{\ast^{l-2}}_{1}\right), & \hbox{$s=1,l=k+1;$} \\
            \F X_{1}^{\ast}\wedge X_{2}^{\ast}\wedge Y^{\ast^{l-3}}_{1}, & \hbox{$s=2,l=k+1;$} \\
                   0, & \hbox{else.} \\
                \end{array}
              \right..
\end{equation*}
\begin{equation*}
\mathrm{Im}\ D^{k,l}_{s}=\left\{
                \begin{array}{ll}
               \F Y^{\ast^{2k-l+1}}_{1} \wedge Y^{\ast^{l-k-1}}_{2}, & \hbox{$s=0,k+1\leq l\leq 2k;$} \\
                \F X_{1}^{\ast}\wedge Y^{\ast^{l-2}}_{1}, & \hbox{$s=1,l=k+1;$} \\
                \F X_{1}^{\ast}\wedge Y^{\ast^{2k-l}}_{1}\wedge Y^{\ast^{l-k-1}}_{2}\bigoplus\F X_{2}^{\ast}\wedge Y^{\ast^{2k-l+1}}_{1}\wedge Y^{\ast^{l-k-2}}_{2}, & \hbox{$s=1,k+2\leq l\leq 2k;$} \\
               \F X_{1}^{\ast}\wedge X_{2}^{\ast}\wedge Y^{\ast^{2k-l}}_{1}\wedge Y^{\ast^{l-k-2}}_{2}, & \hbox{$s=2,k+2\leq l\leq 2k-1;$} \\                            0, & \hbox{else.} \\
                \end{array}
              \right..
\end{equation*}
From Theorem \ref{2.1.2} and Eqs. (3.1.2), (3.1.3), $\mathrm{H}^{\bullet}(L_{3,1})$ has a basis:
$$Y_{1}^{\ast^{i}},\quad X_{1}^{\ast}\wedge Y_{1}^{\ast^{i}},\quad X_{1}^{\ast}\wedge X_{2}^{\ast}\wedge Y_{1}^{\ast^{i}},\quad X_{1}^{\ast}\wedge  Y_{1}^{\ast^{i}}\wedge  Y_{2}-X_{2}^{\ast}\wedge  Y_{1}^{\ast^{i+1}},$$
$$ X_{0}^{\ast}\wedge Y_{2}^{\ast^{i}},\quad X_{0}^{\ast}\wedge X_{1}^{\ast}\wedge Y_{2}^{\ast^{i+1}}, \quad X_{0}^{\ast}\wedge X_{2}^{\ast}\wedge Y_{2}^{\ast^{i}},\quad X_{0}^{\ast}\wedge X_{1}^{\ast}\wedge X_{2}^{\ast}\wedge  Y_{2}^{\ast^{i}},\ i\geq 0.$$
The conclusion can be obtained by a direct calculation.
\end{proof}
\end{exa}

\subsection{Low-dimensional filiform Lie superalgebras}
In this section, we describe the cohomology of $\mathcal{F}_{1,2}$, $\mathcal{F}_{2,2}$ by using the Hochschild-Serre spectral sequence.

\begin{lem}\label{2.5}

For $\mathcal{F}_{s,2}^{t_{s}}$, $s=1,2$, $t_{1}=1,2,3$, $t_{2}=1,3,4,5$, let $\mathcal{I}_{s,2}^{t_{s}}=[\mathcal{F}_{s,2}^{t_{s}},\mathcal{F}_{s,2}^{t_{s}}]$. For $k\geq 0$, $0\leq i\leq k$, the following conclusions hold:

(1) $\mathrm{E}_{2}^{k-i,i}=\bigwedge^{k-i}\left(\mathcal{F}_{s,2}^{t_{s}}/\mathcal{I}_{s,2}^{t_{s}}\right)^{\ast}\bigotimes \bigwedge^{i}\left(\mathcal{I}_{s,2}^{t_{s}}\right)^{\ast}\Longrightarrow \mathrm{H}^{k}\left(\mathcal{F}_{s,2}^{t_{s}}\right).$

(2) $\mathrm{E}_{\infty}^{k-i,i}=\mathrm{E}_{3}^{k-i,i}.$
\end{lem}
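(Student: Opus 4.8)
The plan is to exploit that, for each of the seven superalgebras $\mathcal{F}_{s,2}^{t_s}$ occurring in the statement ($s=1$ with $t_1\in\{1,2,3\}$, and $s=2$ with $t_2\in\{1,3,4,5\}$), the ideal $\mathcal{I}:=\mathcal{I}_{s,2}^{t_s}=[\mathcal{F}_{s,2}^{t_s},\mathcal{F}_{s,2}^{t_s}]$ is a \emph{central} ideal. First I would read this off the bracket tables: in each case $\mathcal{I}$ is spanned by $Y_2$ together with at most one even vector ($X_1$ for $\mathcal{F}_{1,2}^3$, $X_2$ for the four $\mathcal{F}_{2,2}^{t_2}$, and no even vector for $\mathcal{F}_{1,2}^1,\mathcal{F}_{1,2}^2$), and a one-line computation shows that every bracket of such a generator with a standard basis vector vanishes. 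Being central, $\mathcal{I}$ is abelian, and $\mathcal{F}_{s,2}^{t_s}/\mathcal{I}$ is abelian as a quotient by the derived subalgebra. (This is exactly where $\mathcal{F}_{2,2}^2$ must be excluded: there $[Y_1,Y_2]=\frac{1}{2}X_2\ne 0$ while $Y_2\in\mathcal{I}_{2,2}^2$, so that ideal is not central.) Below, $\mathcal{I}$ denotes $\mathcal{I}_{s,2}^{t_s}$.

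For (1) I would feed this into the Hochschild--Serre spectral sequence (\ref{1.1}) for the ideal $\mathcal{I}$. Since $\mathcal{I}$ is abelian its Chevalley--Eilenberg differential is zero, so $\mathrm{H}^i(\mathcal{I})=\bigwedge^i\mathcal{I}^\ast$; since $\mathcal{I}$ is central, $\mathcal{F}_{s,2}^{t_s}$ acts trivially on $\mathcal{I}$, hence on $\mathcal{I}^\ast$ and on $\bigwedge^i\mathcal{I}^\ast$, so $\mathrm{H}^i(\mathcal{I})$ is a trivial $\mathcal{F}_{s,2}^{t_s}/\mathcal{I}$-module; and since $\mathcal{F}_{s,2}^{t_s}/\mathcal{I}$ is abelian, $\mathrm{H}^{k-i}(\mathcal{F}_{s,2}^{t_s}/\mathcal{I};M)=\bigwedge^{k-i}(\mathcal{F}_{s,2}^{t_s}/\mathcal{I})^\ast\otimes M$ for every trivial module $M$. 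Combining the three statements yields $\mathrm{E}_2^{k-i,i}=\mathrm{H}^{k-i}\big(\mathcal{F}_{s,2}^{t_s}/\mathcal{I};\mathrm{H}^i(\mathcal{I})\big)=\bigwedge^{k-i}(\mathcal{F}_{s,2}^{t_s}/\mathcal{I})^\ast\otimes\bigwedge^i\mathcal{I}^\ast$, converging to $\mathrm{H}^k(\mathcal{F}_{s,2}^{t_s})$; this is (1).

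For (2) the key point is a bidegree property of the Chevalley--Eilenberg differential $d$ of $\mathcal{F}_{s,2}^{t_s}$ relative to the factorisation $\bigwedge^\bullet(\mathcal{F}_{s,2}^{t_s})^\ast\cong\bigwedge^\bullet(\mathcal{F}_{s,2}^{t_s}/\mathcal{I})^\ast\otimes\bigwedge^\bullet\mathcal{I}^\ast$ coming from the standard basis. On one side, $d$ vanishes on $(\mathcal{F}_{s,2}^{t_s}/\mathcal{I})^\ast$, because $\ker\big(d|_{(\mathcal{F}_{s,2}^{t_s})^\ast}\big)$ is the annihilator of $[\mathcal{F}_{s,2}^{t_s},\mathcal{F}_{s,2}^{t_s}]=\mathcal{I}$. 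On the other side, $d$ maps $\mathcal{I}^\ast$ into $\bigwedge^2(\mathcal{F}_{s,2}^{t_s}/\mathcal{I})^\ast$: for $b^\ast\in\mathcal{I}^\ast$, $z\in\mathcal{I}$ and $y\in\mathcal{F}_{s,2}^{t_s}$ one has $(db^\ast)(z\wedge y)=\pm b^\ast([z,y])=0$ by centrality, so $db^\ast$ has no $\mathcal{I}^\ast$-factor. By the Leibniz rule $d$ then carries $\bigwedge^{p}(\mathcal{F}_{s,2}^{t_s}/\mathcal{I})^\ast\otimes\bigwedge^{q}\mathcal{I}^\ast$ into $\bigwedge^{p+2}(\mathcal{F}_{s,2}^{t_s}/\mathcal{I})^\ast\otimes\bigwedge^{q-1}\mathcal{I}^\ast$; that is, $d$ is homogeneous of degree $+2$ for the Hochschild--Serre filtration (the one indexed so that its $p$-th graded piece carries the $(\mathcal{F}_{s,2}^{t_s}/\mathcal{I})^\ast$-degree). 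Consequently the spectral sequence differentials $d_0$ and $d_1$ vanish (which re-proves (1)), $d_2$ is the differential induced by $d$ itself, and every $d_r$ with $r\ge 3$ vanishes, since such a $d_r$ would have filtration degree $r\ne 2$. Hence $\mathrm{E}_\infty^{k-i,i}=\mathrm{E}_3^{k-i,i}$, which is (2).

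The only laborious step is the seven-case verification that $\mathcal{I}$ is central, and the only subtlety I anticipate is purely one of convention: I must align ``$d$ has filtration degree $+2$'' with the indexing $d_r\colon \mathrm{E}_r^{k-i,i}\to\mathrm{E}_r^{k-i+r,\,i-r+1}$, so that the transgression is precisely the degree-$2$ differential. With the filtration direction fixed, no representation-theoretic or computational obstacle remains: the whole content is that $\mathcal{F}_{s,2}^{t_s}$ is a central extension of an abelian Lie superalgebra, so the only possibly nonzero higher differential is $d_2$.
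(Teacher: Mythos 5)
Your proposal is correct and follows essentially the same route as the paper: centrality of the derived ideal $\mathcal{I}_{s,2}^{t_s}$ gives a trivial action of the abelian quotient on $\bigwedge^{i}(\mathcal{I}_{s,2}^{t_s})^{\ast}$, whence the stated $\mathrm{E}_2$-page, and the spectral sequence degenerates at $\mathrm{E}_3$. In fact your justification of part (2) --- that $d$ kills $(\mathcal{F}_{s,2}^{t_s}/\mathcal{I}_{s,2}^{t_s})^{\ast}$, sends $(\mathcal{I}_{s,2}^{t_s})^{\ast}$ into $\bigwedge^{2}(\mathcal{F}_{s,2}^{t_s}/\mathcal{I}_{s,2}^{t_s})^{\ast}$, and is therefore homogeneous of filtration degree exactly $2$, forcing $d_r=0$ for $r\geq 3$ --- is more explicit than the paper's, which asserts the degeneration without spelling out this degree argument.
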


\begin{proof}
(1) Since $\mathcal{I}_{s,2}^{t_{s}}\subseteq C\left(\mathcal{F}_{s,2}^{t_{s}}\right)$, the action of $\mathcal{F}_{s,2}^{t_{s}}/\mathcal{I}_{s,2}^{t_{s}}$ on $\left(\mathcal{I}_{s,2}^{t_{s}}\right)^{\ast}$ is trivial. Moreover, from Eq. (2.0.3), we have
$$\mathrm{E}_{2}^{k-i,i}=\mathrm{H}^{k-i}\left(\mathcal{F}_{s,2}^{t_{s}}/\mathcal{I}_{s,2}^{t_{s}},\mathrm{H}^{i}\left(\mathcal{I}_{s,2}^{t_{s}}\right)\right)=\bigwedge^{k-i}\left(\mathcal{F}_{s,2}^{t_{s}}/\mathcal{I}_{s,2}^{t_{s}}\right)^{\ast}\bigotimes \bigwedge^{i}\left(\mathcal{I}_{s,2}^{t_{s}}\right)^{\ast}\Longrightarrow \mathrm{H}^{k}\left(\mathcal{F}_{s,2}^{t_{s}}\right).$$
(2) From (1), we have
$$\mathrm{E}_{r}^{k-i,i}=\mathrm{E}_{3}^{k-i,i}, \quad r\geq 3.$$
Moreover, $\mathrm{E}_{\infty}^{k-i,i}=\mathrm{E}_{3}^{k-i,i}.$
\end{proof}

\begin{thm}
The following $\mathbb{Z}$-graded superalgebra isomorphisms hold:

(1) $\mathrm{H}^{\bullet}(\mathcal{F}_{1,2}^{2})\cong \mathcal{U}_{1,2}^{2}\ltimes \mathcal{V}_{1,2}^{2},$
where $\mathcal{U}_{1,2}^{2}$ is an infinite-dimensional $\mathbb{Z}$-graded superalgebra with a $\mathbb{Z}_{2}$-homogeneous basis
$$\{\alpha_{i},\alpha_{0,i}\mid i\geq 0\},$$
satisfying that
$$|\alpha_{i}|=|\alpha_{0,i}|=\overline{i}\ (\mathrm{mod}\ 2);\quad \|\alpha_{i}\|=i,\ \|\alpha_{0,i}\|=i+1,$$
and the multiplication is given by
$$\alpha_{i}\alpha_{j}=\alpha_{i+j},\quad \alpha_{0,i}\alpha_{j}=\alpha_{0,i+j},\ i,j\geq 0,$$
$\mathcal{V}_{1,2}^{2}$ is an infinite-dimensional $\mathbb{Z}$-graded superalgebra with a $\mathbb{Z}_{2}$-homogeneous basis
$$\{\alpha_{0,1}^{i},\alpha_{0,1,i}\mid i\geq 0\},$$
satisfying that
$$|\alpha_{0,1}^{i}|=|\alpha_{0,1,i}|=\overline{i}\ (\mathrm{mod}\ 2);\quad \|\alpha_{0,1}^{i}\|=i+1,\ \|\alpha_{0,1,i}\|=i+2,$$
and the trivial multiplication. The multiplication between $\mathcal{U}_{1,2}^{2}$ and $\mathcal{V}_{1,2}^{2}$ is graded-super
-commutative, and the multiplication is given by
$$\alpha_{0,1}^{i}\alpha_{0}=\alpha_{0,1}^{i},\quad\alpha_{0,1,i}\alpha_{0}=\alpha_{0,1,i}, \quad \alpha_{0,0}\alpha_{0,1}^{i}=\alpha_{0,1,i},\ i\geq0.$$
In particular,
\begin{equation*}
\mathrm{dim}\ \mathrm{H}^{k}(\mathcal{F}_{1,2}^{2})=\left\{
                \begin{array}{ll}
                  1, & \hbox{$k=0$;} \\
                3, & \hbox{$k=1;$} \\
                   4, & \hbox{$k\geq 2.$} \\
                \end{array}
              \right.
\end{equation*}

(2) $\mathrm{H}^{\bullet}(\mathcal{F}_{1,2}^{3})\cong \mathcal{U}_{1,2}^{3},$
where $\mathcal{U}_{1,2}^{3}$ is an infinite-dimensional $\mathbb{Z}$-graded superalgebra with a $\mathbb{Z}_{2}$-homogeneous basis
$$\{\alpha_{0},\alpha_{1},\alpha_{0,i},\beta_{i}\mid i\geq 0\},$$
satisfying that
$$|\alpha_{0}|=\overline{0}\ (\mathrm{mod}\ 2),\ |\alpha_{1}|=\overline{1}\ (\mathrm{mod}\ 2),\ |\alpha_{0,i}|=|\beta_{i}|=\overline{i}\ (\mathrm{mod}\ 2);$$
$$ \|\alpha_{0}\|=0,\ \|\alpha_{1}\|=1,\ \|\alpha_{0,i}\|=i+1,\ \|\beta_{i}\|=i+2,$$
and the multiplication is given by
$$\alpha_{0}\alpha_{0}=\alpha_{0},\quad \alpha_{0}\alpha_{1}=\alpha_{1},\quad \alpha_{0}\alpha_{0,i}=\alpha_{0,i},\quad \alpha_{0}\beta_{i}=\beta_{i},\ i\geq 0.$$
In particular,
\begin{equation*}
\mathrm{dim}\ \mathrm{H}^{k}(\mathcal{F}_{1,2}^{3})=\left\{
                \begin{array}{ll}
                  1, & \hbox{$k=0$;} \\
                   2, & \hbox{$k\geq 1.$} \\
                \end{array}
              \right.
\end{equation*}

(3) $\mathrm{H}^{\bullet}(\mathcal{F}_{2,2}^{3})\cong \mathcal{U}_{2,2}^{3},$
where $\mathcal{U}_{2,2}^{3}$ is an infinite-dimensional $\mathbb{Z}$-graded superalgebra with a $\mathbb{Z}_{2}$-homogeneous basis
$$\{\alpha_{0},\alpha_{1},\beta_{1},\alpha_{1,1},\alpha_{0,i},\beta_{0,i},\alpha_{0,1,i},\beta_{0,1,i}\mid i\geq 0\},$$
satisfying that
$$|\alpha_{0}|=|\alpha_{1}|=\overline{0}\ (\mathrm{mod}\ 2),\ |\beta_{1}|=|\alpha_{1,1}|=\overline{1}\ (\mathrm{mod}\ 2),\ |\alpha_{0,i}|=|\alpha_{0,1,i}|=|\beta_{0,i}|=|\beta_{0,1,i}|=\overline{i}\ (\mathrm{mod}\ 2);$$
$$\|\alpha_{0}\|=0,\ \|\alpha_{1}\|=\|\beta_{1}\|=1,\ \|\alpha_{1,1}\|=2,\ \|\alpha_{0,i}\|=i+1,\ \|\alpha_{0,1,i}\|=\|\beta_{0,i}\|=i+2,\ \|\beta_{0,1,i}\|=i+3,$$
and the multiplication is given by
$$\alpha_{0}\alpha_{0}=\alpha_{0},\quad \alpha_{0}\alpha_{1}=\alpha_{1},\quad \alpha_{0}\beta_{1}=\beta_{1}, \quad \alpha_{0}\alpha_{1,1}=\alpha_{1,1},$$
$$\alpha_{0}\alpha_{0,i}=\alpha_{0,i},\quad \alpha_{0}\alpha_{0,1,i}=\alpha_{0,1,i},\quad \alpha_{0}\beta_{0,i}=\beta_{0,i},\quad \alpha_{0}\beta_{0,1,i}=\beta_{0,1,i},$$
$$ \alpha_{1}\beta_{1}=\alpha_{1,1},\quad \alpha_{1}\alpha_{0,i}=-\alpha_{0,1,i},\quad \alpha_{1}\beta_{0,i}=\beta_{0,1,i},\quad \beta_{1}\beta_{1}=2\alpha_{0,1,0},$$
$$\beta_{1}\beta_{0,i}=2\alpha_{0,1,i+1},\quad \beta_{0,i}\beta_{0,j}=2\alpha_{0,1,i+j+2},\ i\geq 0.$$
In particular,
\begin{equation*}
\mathrm{dim}\ \mathrm{H}^{k}(\mathcal{F}_{2,2}^{3})=\left\{
                \begin{array}{ll}
                  1, & \hbox{$k=0$;} \\
                   3, & \hbox{$k=1;$} \\
                  4, & \hbox{$k\geq 2.$} \\
                \end{array}
              \right.
\end{equation*}

(4) $\mathrm{H}^{\bullet}(\mathcal{F}_{2,2}^{4})\cong \mathcal{U}_{2,2}^{4}\ltimes \mathcal{V}_{2,2}^{4},$
where $\mathcal{U}_{2,2}^{4}$ is an infinite-dimensional $\mathbb{Z}$-graded superalgebra with a $\mathbb{Z}_{2}$-homogeneous basis
$$\{\alpha_{i},\beta_{i}\mid i\geq 0\},$$
satisfying that
$$|\alpha_{i}|=\overline{i}\ (\mathrm{mod}\ 2),\ |\beta_{i}|=\overline{i+1}\ (\mathrm{mod}\ 2);\quad \|\alpha_{i}\|=i,\ \|\beta_{i}\|=i+2,$$
and the multiplication is given by
$$\beta_{i}\alpha_{0}=\beta_{i},\quad \alpha_{i}\alpha_{j}=\alpha_{i+j},\ i,j\geq 0.$$
$\mathcal{V}_{2,2}^{4}$ is an infinite-dimensional $\mathbb{Z}$-graded superalgebra with a $\mathbb{Z}_{2}$-homogeneous basis
$$\{\alpha,\beta,\alpha_{0,i},\alpha_{0,1,i},\alpha_{0,2,i},\alpha_{0,1,2,i},\beta_{1,i},\beta_{2,i}\mid i\geq 0\},$$
satisfying that
$$|\alpha|=|\beta|=0\ (\mathrm{mod}\ 2),\ |\alpha_{0,i}|=|\alpha_{0,2,i}|=|\alpha_{0,1,2,i}|=\overline{i}\ (\mathrm{mod}\ 2),$$
$$|\alpha_{0,1,i}|=|\beta_{1,i}|=|\beta_{2,i}|=\overline{i+1}\ (\mathrm{mod}\ 2);$$
$$\|\alpha\|=1,\ \|\beta\|=2,\ \|\alpha_{0,i}\|=i+1,\ \|\alpha_{0,2,i}\|=\|\beta_{1,i}\|=i+2,$$
$$ \|\alpha_{0,1,i}\|=\|\alpha_{0,1,2,i}\|=\|\beta_{2,i}\|=i+3,$$
and the multiplication is given by
$$\alpha_{0,0}\beta_{2,i}=\alpha_{0,1,2,i+1},\quad \alpha_{0,0}\beta=\alpha_{0,1,2,0},\quad\alpha_{0,0}\beta_{1,0}=\alpha_{0,2,1}-\alpha_{0,1,0},$$
$$\alpha_{0,i}\beta_{1,j+1}=(-1)^{i}\alpha_{0,2,i+j+2},\quad\alpha_{0,i+1}\beta_{1,j}=(-1)^{i+1}\alpha_{0,2,i+j+2},$$
$$\alpha_{0,2,0}\alpha=-\alpha_{0,1,2,0},\quad \alpha\beta_{1,0}=-\alpha_{0,1,0}-\alpha_{0,2,1},$$
$$\alpha_{0,2,0}\beta_{1,0}=\alpha_{0,1,2,1},\quad \beta_{2,i}\alpha=(-1)^{i}\alpha_{0,1,2,i+1},$$
$$\beta_{2,i}\beta_{1,0}=(-1)^{i+1}\alpha_{0,1,2,i+2},\quad \alpha\beta_{1,i+1}=-\alpha_{0,2,i+2}\ i,j\geq 0.$$
The multiplication between $\mathcal{U}_{2,2}^{4}$ and $\mathcal{V}_{2,2}^{4}$ is graded-supercommutative, and the multiplication is given by
$$\alpha_{0,1,i}\alpha_{0}=\alpha_{0,1,i},\quad \alpha_{0,1,2,i}\alpha_{0}=\alpha_{0,1,2,i},\quad \beta_{2,i}\alpha_{0}=\beta_{2,i},\quad \alpha\alpha_{0}=\alpha,\quad\beta\alpha_{0}=\beta,$$
$$\alpha_{0,0}\beta_{i}=\alpha_{0,1,i},\quad \alpha_{0,i}\alpha_{j}=\alpha_{0,i+j},\quad \alpha_{0,2,i}\alpha_{j}=\alpha_{0,2,i+j},\quad \beta_{i}\alpha=(-1)^{i+1}\alpha_{0,1,i},$$
$$ \beta_{2,i}\alpha_{1}=-\frac{1}{i+2}\alpha_{0,1,i+1},\quad \alpha\alpha_{i+1}=-\alpha_{0,i+1},\quad \alpha_{i}\beta_{1,j}=(-1)^{i}\beta_{1,i+j},$$
$$\alpha_{0,2,0}\beta_{i}=-\alpha_{0,1,2,i+1}, \quad\beta_{i}\beta=\alpha_{0,1,2,i+1},\quad \beta_{i}\beta_{1,0}=(-1)^{i}\frac{i+3}{i+2}\alpha_{0,1,i+1},$$
$$\beta\alpha_{1}=-\alpha_{0,1,0}-\alpha_{0,2,1},\quad\beta\alpha_{i+2}=-\alpha_{0,2,i+2},\ i,j\geq 0.$$
In particular,
\begin{equation*}
\mathrm{dim}\ \mathrm{H}^{k}(\mathcal{F}_{2,2}^{4})=\left\{
                \begin{array}{ll}
                  1, & \hbox{$k=0$;} \\
                   3, & \hbox{$k=1;$} \\
                  6, & \hbox{$k=2;$} \\
                  8, & \hbox{$k\geq 3.$} \\
                \end{array}
              \right.
\end{equation*}

(5) $\mathrm{H}^{\bullet}(\mathcal{F}_{2,2}^{5})\cong \mathcal{U}_{2,2}^{5},$
where $\mathcal{U}_{2,2}^{5}$ is an infinite-dimensional $\mathbb{Z}$-graded superalgebra with a $\mathbb{Z}_{2}$-homogeneous basis
$$\{\alpha_{0},\alpha_{1},\alpha_{0,1,i},\alpha_{0,1}^{i},\beta_{1,i},\beta_{2,i}\mid i\geq 0\},$$
satisfying that
$$|\alpha_{0}|=|\alpha_{1}|=\overline{0}\ (\mathrm{mod}\ 2),\ |\alpha_{0,1,i}|=|\alpha_{0,1}^{i}|=\overline{i}\ (\mathrm{mod}\ 2),\ |\beta_{1,i}|=|\beta_{2,i}|=\overline{i+1}\ (\mathrm{mod}\ 2);$$
$$\|\alpha_{0}\|=0,\ \|\alpha_{1}\|=1,\ \|\alpha_{0,1}^{i}\|=\|\beta_{1,i}\|=i+1,\ \|\alpha_{0,1,i}\|=\|\beta_{2,i}\|=i+2,$$
 and the multiplication is given by
$$\alpha_{0}\alpha_{0}=\alpha_{0},\quad \alpha_{0}\alpha_{1}=\alpha_{1},\quad \alpha_{0}\alpha_{0,1,i}=\alpha_{0,1,i},\quad \alpha_{0}\alpha_{0,1}^{i}=\alpha_{0,1}^{i},\quad \alpha_{0}\beta_{1,i}=\beta_{1,i},\quad \alpha_{0}\beta_{2,i}=\beta_{2,i},$$
$$\alpha_{1}\alpha_{0,1}^{i}=\alpha_{0,1,i},\quad \alpha_{1}\beta_{1,i}=\beta_{2,i},\quad \beta_{1,i_{1}}\beta_{1,i_{2}}=2\alpha_{0,1,i_{1}+i_{2}},\ i,i_{1},i_{2}\geq 0.$$
In particular,
\begin{equation*}
\mathrm{dim}\ \mathrm{H}^{k}(\mathcal{F}_{2,2}^{5})=\left\{
                \begin{array}{ll}
                  1, & \hbox{$k=0$;} \\
                   3, & \hbox{$k=1;$} \\
                  4, & \hbox{$k\geq 2.$} \\
                \end{array}
              \right.
\end{equation*}
\end{thm}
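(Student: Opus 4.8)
The plan is to handle each of the five algebras $\mathcal{F}_{s,2}^{t_s}$ in turn, reusing the method of the preceding examples but now feeding the Hochschild--Serre spectral sequence of Lemma~\ref{2.5} attached to the derived ideal $\mathcal{I}=\mathcal{I}_{s,2}^{t_s}$. First I would read off from the classification of $\mathcal{F}_{1,2}$ and $\mathcal{F}_{2,2}$ exactly what $\mathcal{I}$ and the quotient $\mathcal{F}_{s,2}^{t_s}/\mathcal{I}$ are: in every case $\mathcal{I}$ is a central abelian ideal of dimension one or two (purely odd, or split between the two parities), so that $\mathrm{H}^{\bullet}(\mathcal{I})=\bigwedge^{\bullet}\mathcal{I}^{\ast}$ is a polynomial algebra on the odd generators (this is the source of the infinite dimensionality) times an exterior part on the even ones, while $\mathrm{H}^{\bullet}(\mathcal{F}_{s,2}^{t_s}/\mathcal{I})=\bigwedge^{\bullet}(\mathcal{F}_{s,2}^{t_s}/\mathcal{I})^{\ast}$ because the quotient is abelian. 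Substituting into Lemma~\ref{2.5}(1) then exhibits $\mathrm{E}_{2}^{\bullet,\bullet}$ as an explicit bigraded associative superalgebra, the tensor product of these two pieces.

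Next I would compute the only possibly nonzero differential $d_{2}\colon\mathrm{E}_{2}^{k-i,i}\to\mathrm{E}_{2}^{k-i+2,\,i-1}$. Since the brackets of each $\mathcal{F}_{s,2}^{t_s}$ involve only one or two nonzero structure constants, the Chevalley--Eilenberg differential $d$ on $\bigwedge^{\bullet}(\mathcal{F}_{s,2}^{t_s})^{\ast}$ (the formula of Section~2, with the super signs) is given on generators by one or two monomials, e.g.\ for $\mathcal{F}_{2,2}^{5}$ one has $d(X_2^{\ast})$ equal, up to sign, to $X_0^{\ast}\wedge X_1^{\ast}+\tfrac12\,Y_1^{\ast\,2}$ and $d(Y_2^{\ast})$ to $X_0^{\ast}\wedge Y_1^{\ast}+X_1^{\ast}\wedge Y_1^{\ast}$; thus $d_{2}$ acts on the polynomial/exterior generators of $\mathrm{E}_{2}$ essentially by multiplication by these transgressed classes. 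A short bidegree-by-bidegree computation of $\mathrm{Ker}\,d_{2}$ and $\mathrm{Im}\,d_{2}$, together with Lemma~\ref{2.5}(2) giving $\mathrm{E}_{3}=\mathrm{E}_{\infty}$, then produces the additive structure of $\mathrm{H}^{\bullet}(\mathcal{F}_{s,2}^{t_s})$ and, on taking total degrees, the Betti numbers $\dim\mathrm{H}^{k}$ asserted in each item.

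The substantive part is promoting the bigraded algebra $\mathrm{E}_{\infty}$ to the genuine $\mathbb{Z}$-graded superalgebra. For this I would pick, for each $\mathrm{E}_{\infty}$-class, an explicit cocycle representative in $(\bigwedge^{\bullet}(\mathcal{F}_{s,2}^{t_s})^{\ast},d)$ --- these are the cochains I would then name $\alpha_{\bullet},\beta_{\bullet}$, some of them single monomials and some corrected linear combinations in the spirit of the $\lambda$-type corrections in the $L_{1,3}$ example --- and compute all cup products directly in the wedge algebra. The filtration guarantees that every such product equals the corresponding $\mathrm{E}_{\infty}$-product modulo terms of strictly higher filtration, so what remains is to evaluate those ``hidden'' extension contributions: this is precisely where non-obvious relations like $\beta_{1}\beta_{1}=2\alpha_{0,1,0}$, $\alpha_{1}\alpha_{0,i}=-\alpha_{0,1,i}$, or $\alpha_{0,i}\beta_{1,0}=(-1)^{i}\frac{i+3}{i+2}\alpha_{0,1,i+1}$ arise, each to be pinned down by writing out an explicit coboundary. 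Finally one checks in each case that the resulting multiplication table is the one declared in the theorem, i.e.\ that $\mathcal{U}_{s,2}^{t_s}$ is a subalgebra, $\mathcal{V}_{s,2}^{t_s}$ a square-zero ideal, and the cross-products between them are as listed, so that $\mathrm{H}^{\bullet}(\mathcal{F}_{s,2}^{t_s})\cong\mathcal{U}_{s,2}^{t_s}\ltimes\mathcal{V}_{s,2}^{t_s}$ (or $\cong\mathcal{U}_{s,2}^{t_s}$ when there is no nontrivial summand), in parallel with Theorem~\ref{2.1.2}.

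I expect the main obstacle to be exactly this hidden-extension bookkeeping: one must carry infinite families of classes indexed by $i$ (and in places by $i,j$), keep the $\mathbb{Z}_2$- and $\mathbb{Z}$-grading signs consistent with the conventions of Section~2, and exhibit closed-form coboundaries accounting for each rescaling factor such as $\frac{i+3}{i+2}$ or $-\frac1{i+2}$. A secondary difficulty is that for $\mathcal{F}_{1,2}^{3}$, $\mathcal{F}_{2,2}^{3}$ and $\mathcal{F}_{2,2}^{5}$ the final answer has no proper semidirect summand, so $d_{2}$ collapses more of $\mathrm{E}_{2}$ and the surviving classes are more entangled; the representatives must be chosen judiciously there to make the product formulas come out in the clean form stated.
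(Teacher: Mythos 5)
Your proposal is correct and follows essentially the same route as the paper: apply the Hochschild--Serre spectral sequence of Lemma~\ref{2.5} relative to the (central) derived ideal, compute $d_{2}$ explicitly as wedging with the transgressed classes $d(X_{2}^{\ast})$, $d(Y_{2}^{\ast})$, use the degeneration $\mathrm{E}_{\infty}=\mathrm{E}_{3}$ to get the additive structure and Betti numbers, and then determine the product by a direct calculation on explicit cocycle representatives. The only difference is one of emphasis --- you spell out the hidden-extension bookkeeping that the paper compresses into ``the conclusion can be obtained by a direct calculation'' --- so no further comment is needed.
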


\begin{proof}
(1) For $k\geq 0$, $0\leq i\leq k$, consider the mapping
\begin{align*}
  d_{2}^{k-i,i}: \bigwedge^{k-i}(\mathcal{F}_{1,2}^{2}/\F Y_{2})^{\ast}\bigotimes \F Y_{2}^{\ast^{i}}&\longrightarrow \bigwedge^{k-i+2}(\mathcal{F}_{1,2}^{2}/\F Y_{2})^{\ast}\bigotimes \F Y_{2}^{\ast^{i-1}}, \\
 f\otimes Y_{2}^{\ast^{i}}& \longmapsto (-1)^{\|f\|}if\wedge d(Y_{2}^{\ast})\otimes Y_{2}^{\ast^{i-1}},
\end{align*}
where
\begin{equation*}
d(Y_{2}^{\ast})=(X_{0}^{\ast}+X_{1}^{\ast})\wedge Y_{1}^{\ast}.
\end{equation*}
By Lemma \ref{2.5}, we have
\begin{equation*}
\mathrm{E}^{k-i,i}_{\infty}=\left\{
                \begin{array}{ll}
                 \frac{\bigwedge^{k}(\mathcal{F}_{1,2}^{2}/\F Y_{2})^{\ast}}{\bigwedge^{k-2}(\mathcal{F}_{1,2}^{2}/\F Y_{2})^{\ast}\bigwedge \F d(Y_{2}^{\ast})}, & \hbox{$i=0$;} \\
                   \frac{\F(X_{0}^{\ast}+X_{1}^{\ast})\wedge Y_{1}^{\ast^{k-i-1}}\otimes Y_{2}^{\ast^{i}}\bigoplus\F X_{0}^{\ast}\wedge X_{1}^{\ast}\wedge Y_{1}^{\ast^{k-i-2}}\otimes Y_{2}^{\ast^{i}}}{\bigwedge^{k-2-i}(\mathcal{F}_{1,2}^{2}/\F Y_{2})^{\ast}\bigwedge \F d(Y_{2}^{\ast})\bigotimes \F Y_{2}^{\ast^{i}}}, & \hbox{$1\leq i\leq k-2$;} \\
                \F(X_{0}^{\ast}+X_{1}^{\ast})\bigotimes Y_{2}^{\ast^{k-1}}, & \hbox{$i=k-1$;} \\               0, & \hbox{$i=k$.}\\
                \end{array}
              \right.
\end{equation*}
From $\mathrm{H}^{k}(\mathcal{F}_{1,2}^{2})=\bigoplus\limits_{i=0}^{k}\mathrm{E}^{k-i,i}_{\infty}$, we can obtain the conclusion.

The proofs of $(2), (3), (4), (5)$ are similar to $(1)$.
\end{proof}

\begin{lem}\label{4}
For $\mathcal{F}_{2,2}^{2}$, let $I=\mathrm{span}_{\F}\{X_{1},X_{2},Y_{1},Y_{2}\}$.

$(1)$ For $k\geq 0$, the following conclusion holds:
$$\mathrm{H}^{k}(F_{2,2}^{2})\cong \mathrm{H}^{0}\left(\F X_{0},\mathrm{H}^{k}(I)\right)\bigoplus \mathrm{H}^{1}\left(\F X_{0}, \mathrm{H}^{k-1}(I)\right).$$

$(2)$ For $k\geq 2$, $\mathrm{H}^{k}(I)$ has a basis:

$$Y_{2}^{\ast^{k}}, \quad X_{1}^{\ast}\wedge Y_{2}^{\ast^{k-1}}-X_{2}^{\ast}\wedge Y_{1}^{\ast}\wedge Y_{2}^{\ast^{k-2}}.$$

$(3)$ For $k\geq 2$, $\mathrm{H}^{0}\left(\F X_{0},\mathrm{H}^{k}(I)\right)$ has a basis:
$$Y_{2}^{\ast^{k}}, \quad X_{1}^{\ast}\wedge Y_{2}^{\ast^{k-1}}-X_{2}^{\ast}\wedge Y_{1}^{\ast}\wedge Y_{2}^{\ast^{k-2}}.$$

$(4)$ For $k\geq 3$, $\mathrm{H}^{1}\left(\F X_{0}, \mathrm{H}^{k-1}(I)\right)$ has a basis:
$$X_{0}^{\ast}\wedge Y_{2}^{\ast^{k-1}}, \quad X_{0}^{\ast}\wedge X_{1}^{\ast}\wedge Y_{2}^{\ast^{k-2}}-X_{0}^{\ast}\wedge X_{2}^{\ast}\wedge Y_{1}^{\ast}\wedge Y_{2}^{\ast^{k-3}}.$$

\begin{proof}
(1) Note $I$ is an ideal of $\mathcal{F}_{2,2}^{2}$. From Eq. (\ref{1.1}), we have
\begin{equation*}
\mathrm{E}^{i,k-i}_{\infty}\cong\left\{
                \begin{array}{ll}
                 \mathrm{H}^{0}\left(\F X_{0},\mathrm{H}^{k}(I)\right), & \hbox{$i=0$;} \\
                \mathrm{H}^{1}\left(\F X_{0},\mathrm{ H}^{k-1}(I)\right), & \hbox{$i=1$;} \\
            0, & \hbox{else.}\\
                \end{array}
              \right.
\end{equation*}
Moreover, we have
$$\mathrm{H}^{k}(F_{2,2}^{2})=\bigoplus\limits_{i=0}^{k}\mathrm{E}^{i,k-i}_{\infty}\cong \mathrm{H}^{0}\left(\F X_{0},\mathrm{H}^{k}(I)\right)\bigoplus \mathrm{H}^{1}\left(\F X_{0}, \mathrm{H}^{k-1}(I)\right).$$

(2) We use the Hochschild-Serre spectral sequence relative to the ideal $I_{1}=[I,I]$. Note that $I_{1}\subseteq C(I)$, we have
\begin{equation*}
\mathrm{E}^{k-i,i}_{2}=\left\{
                \begin{array}{ll}
                 \bigwedge^{k-i}(I/I_{1})^{\ast}\bigotimes\bigwedge^{i}I_{1}^{\ast}, & \hbox{$0\leq i\leq 2$;} \\
            0, & \hbox{else.}\\
                \end{array}
              \right.
\end{equation*}
Moreover, we have
\begin{equation*}
\mathrm{E}^{k-i,i}_{\infty}=\mathrm{E}^{k-i,i}_{3}=\left\{
                \begin{array}{ll}
                 \F Y_{2}^{\ast^{k}}, & \hbox{$i=0$;} \\
                 \F \left(X_{1}^{\ast}\wedge Y_{2}^{\ast^{k-1}}-X_{2}^{\ast}\wedge Y_{1}^{\ast}\wedge Y_{2}^{\ast^{k-2}}\right), & \hbox{$i=1$;} \\
            0, & \hbox{else.}\\
                \end{array}
              \right.
\end{equation*}
Form $\mathrm{H}^{k}(I)=\bigoplus\limits_{i=0}^{k}\mathrm{E}^{k-i,i}_{\infty}$, we can obtain the conclusion.

(3) By the definitions of the low cohomology (see \cite{10}), we have
$$\mathrm{H}^{0}\left(\F X_{0},\mathrm{H}^{k}(I)\right)=\mathrm{H}^{k}(I).$$

(4) From (3), we have
$$\mathrm{H}^{1}\left(\F X_{0}, \mathrm{H}^{k-1}(I)\right)=\F X_{0}^{\ast}\bigwedge \mathrm{H}^{k-1}(I).$$
\end{proof}
\end{lem}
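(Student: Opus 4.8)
The plan is to obtain all four parts from two nested applications of the Hochschild--Serre spectral sequence (\ref{1.1}), in the spirit of Lemmas \ref{2.1.1} and \ref{2.5}. For (1), apply (\ref{1.1}) to the ideal $I$, so that $\mathcal{F}_{2,2}^{2}/I\cong\F X_{0}$ is one-dimensional. Then $\mathrm{H}^{p}(\F X_{0},-)=0$ for all $p\geq 2$, so $\mathrm{E}_{2}^{p,q}=0$ unless $p\in\{0,1\}$; hence every $d_{r}$ with $r\geq 2$ has source or target in a vanishing bidegree, the sequence collapses at $\mathrm{E}_{2}$, and
$$\mathrm{H}^{k}(\mathcal{F}_{2,2}^{2})=\mathrm{E}_{\infty}^{0,k}\oplus\mathrm{E}_{\infty}^{1,k-1}=\mathrm{H}^{0}\left(\F X_{0},\mathrm{H}^{k}(I)\right)\oplus\mathrm{H}^{1}\left(\F X_{0},\mathrm{H}^{k-1}(I)\right),$$
where $\F X_{0}$ acts on $\mathrm{H}^{\bullet}(I)$ via $\mathrm{ad}\,X_{0}$.

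Part (2) is the computational heart. Here $I$ is $2$-step nilpotent with $I_{1}:=[I,I]=\mathrm{span}_{\F}\{X_{1},X_{2}\}\subseteq C(I)$, while $I/I_{1}$ is abelian with zero even part and odd part the span of the images of $Y_{1},Y_{2}$. I would run (\ref{1.1}) for the ideal $I_{1}$ of $I$; since $I_{1}$ is central, $I/I_{1}$ acts trivially on $\mathrm{H}^{\bullet}(I_{1})$, so $\mathrm{E}_{2}^{p,q}=\bigwedge^{p}(I/I_{1})^{\ast}\otimes\bigwedge^{q}I_{1}^{\ast}$. Three facts drive the computation: $\mathrm{H}^{\bullet}(I/I_{1})=\bigwedge^{\bullet}(I/I_{1})^{\ast}$ is the polynomial algebra $\F[Y_{1}^{\ast},Y_{2}^{\ast}]$ — the generators are odd in the super grading, hence central in $\bigwedge^{\bullet}$ — in particular an integral domain; $\bigwedge^{q}I_{1}^{\ast}=0$ for $q\geq 3$, so $d_{r}=0$ for $r\geq 4$; and $d_{2}\colon\mathrm{E}_{2}^{p,q}\to\mathrm{E}_{2}^{p+2,q-1}$ is cup product with the class of the central extension $0\to I_{1}\to I\to I/I_{1}\to 0$, which by $d(X_{1}^{\ast})=-\tfrac{1}{2}Y_{1}^{\ast^{2}}$ and $d(X_{2}^{\ast})=-\tfrac{1}{2}Y_{1}^{\ast}\wedge Y_{2}^{\ast}$ is completely explicit. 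Computing $\mathrm{Ker}\,d_{2}$ and $\mathrm{Coker}\,d_{2}$ degree by degree then yields, for $p\geq 2$, that $\mathrm{E}_{3}^{p,0}$ is one-dimensional with basis $Y_{2}^{\ast^{p}}$; for $p\geq 1$, that $\mathrm{E}_{3}^{p,1}$ is one-dimensional with basis $X_{1}^{\ast}\wedge Y_{2}^{\ast^{p}}-X_{2}^{\ast}\wedge Y_{1}^{\ast}\wedge Y_{2}^{\ast^{p-1}}$; and that $\mathrm{E}_{3}^{p,2}=0$, since $d_{2}$ is injective on the line $q=2$ (the ground ring being a domain). Hence $\mathrm{E}_{3}=\mathrm{E}_{\infty}$ — the only possibly nonzero higher differential $d_{3}\colon\mathrm{E}_{3}^{p,2}\to\mathrm{E}_{3}^{p+3,0}$ has zero source — and $\mathrm{H}^{k}(I)=\mathrm{E}_{\infty}^{k,0}\oplus\mathrm{E}_{\infty}^{k-1,1}$. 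For $k\geq 2$ both surviving classes have honest closed representatives in $\bigwedge^{\bullet}I^{\ast}$, namely $Y_{2}^{\ast^{k}}$ and $X_{1}^{\ast}\wedge Y_{2}^{\ast^{k-1}}-X_{2}^{\ast}\wedge Y_{1}^{\ast}\wedge Y_{2}^{\ast^{k-2}}$ (the latter is closed because $d(X_{2}^{\ast}\wedge Y_{2}^{\ast^{k-2}})=-\tfrac{1}{2}Y_{1}^{\ast}\wedge Y_{2}^{\ast^{k-1}}$, and it is not a coboundary since every coboundary in $\bigwedge^{\bullet}I^{\ast}$ is divisible by $Y_{1}^{\ast}$), which proves (2). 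Equivalently, one could bypass the spectral sequence and compute the cohomology of $(\bigwedge^{\bullet}I^{\ast},d)$ directly, reading it as a Koszul-type complex — an exterior algebra on $X_{1}^{\ast},X_{2}^{\ast}$ over $\F[Y_{1}^{\ast},Y_{2}^{\ast}]$ whose differential sends $X_{1}^{\ast},X_{2}^{\ast}$ to the two relations $-\tfrac{1}{2}Y_{1}^{\ast^{2}}$, $-\tfrac{1}{2}Y_{1}^{\ast}Y_{2}^{\ast}$, which share the common factor $Y_{1}^{\ast}$.

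Parts (3) and (4) then follow formally from the description of low-degree cohomology: for the one-dimensional algebra $\F X_{0}$ and any module $N$ one has $\mathrm{H}^{0}(\F X_{0},N)=N^{X_{0}}$ and $\mathrm{H}^{1}(\F X_{0},N)=\F X_{0}^{\ast}\wedge\left(N/(X_{0}\cdot N)\right)$. For $k\geq 2$ one checks on the representatives from (2) that $X_{0}$ acts trivially on $\mathrm{H}^{k}(I)$ — for instance $X_{0}\cdot Y_{2}^{\ast^{k}}$ is a scalar multiple of $Y_{1}^{\ast}\wedge Y_{2}^{\ast^{k-1}}=-2\,d(X_{2}^{\ast}\wedge Y_{2}^{\ast^{k-2}})$, hence vanishes in $\mathrm{H}^{k}(I)$, and similarly for the second class — so $\mathrm{H}^{0}(\F X_{0},\mathrm{H}^{k}(I))=\mathrm{H}^{k}(I)$, which is (3), and $\mathrm{H}^{1}(\F X_{0},\mathrm{H}^{k-1}(I))=\F X_{0}^{\ast}\wedge\mathrm{H}^{k-1}(I)$ for $k\geq 3$, which is (4). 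The one delicate point is the $\mathrm{E}_{3}$-bookkeeping in (2): pinning down the surviving representatives — in particular the correction term $-X_{2}^{\ast}\wedge Y_{1}^{\ast}\wedge Y_{2}^{\ast^{k-2}}$ — and confirming that it is $d_{3}$, and not merely the differentials $d_{r}$ for $r\geq 4$, that vanishes; here the fact that $\mathrm{H}^{\bullet}(I/I_{1})$ is an integral domain makes both points clean.
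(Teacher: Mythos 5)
Your proposal follows essentially the same route as the paper: part (1) via the Hochschild--Serre spectral sequence for the ideal $I$ with one-dimensional quotient $\F X_{0}$, part (2) via the spectral sequence for the central ideal $I_{1}=[I,I]$ with $\mathrm{E}_{2}^{p,q}=\bigwedge^{p}(I/I_{1})^{\ast}\otimes\bigwedge^{q}I_{1}^{\ast}$ degenerating at $\mathrm{E}_{3}$, and parts (3)--(4) from the triviality of the $X_{0}$-action together with the standard descriptions of $\mathrm{H}^{0}$ and $\mathrm{H}^{1}$ for a one-dimensional algebra. Your version is correct and in fact supplies more of the computational detail (the explicit $d_{2}$, the integral-domain argument for $\mathrm{E}_{3}^{p,2}=0$, and the closed representatives) that the paper leaves implicit.
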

\begin{thm}
The following $\mathbb{Z}$-graded superalgebra isomorphism holds:
$$\mathrm{H}^{\bullet}(F_{2,2}^{2})\cong\mathcal{U}^{2}_{2,2},$$
where $\mathcal{U}_{2,2}^{2}$ is an infinite-dimensional $\mathbb{Z}$-graded superalgebra with a $\mathbb{Z}_{2}$-homogeneous basis
$$\{\alpha_{0},\alpha_{1},\alpha_{0,i},\beta_{0},\beta_{1,i},\beta_{2,i},\beta_{3,i}\mid i\geq 0\},$$
satisfying that
$$|\alpha_{0}|=\overline{0}\ (\mathrm{mod}\ 2),\ |\alpha_{1}|=|\beta_{0}|=\overline{1}\ (\mathrm{mod}\ 2),$$
$$ |\alpha_{0,i}|=|\beta_{1,i}|=|\beta_{3,i}|=\overline{i}\ (\mathrm{mod}\ 2),\ |\beta_{2,i}|=\overline{i+1}\ (\mathrm{mod}\ 2);$$
$$\|\alpha_{0}\|=0,\ \|\alpha_{1}\|=1,\ \|\beta_{0}\|=2,\ \|\alpha_{0,i}\|=i+1,\ \|\beta_{1,i}\|=i+2,\ \|\beta_{2,i}\|=\|\beta_{3,i}\|=i+3,$$
 and the multiplication is given by
$$\alpha_{0}\alpha_{0}=\alpha_{0},\quad \alpha_{0}\alpha_{1}=\alpha_{1},\quad \alpha_{0}\alpha_{0,i}=\alpha_{0,i},\quad \alpha_{0}\beta_{0}=\beta_{0},\quad \alpha_{0}\beta_{1,i}=\beta_{1,i},\quad \alpha_{0}\beta_{2,i}=\beta_{2,i},$$
$$\alpha_{0}\beta_{3,i}=\beta_{3,i},\quad \beta_{1,i}\alpha_{1}=2\beta_{2,i},\quad \alpha_{0,i}\beta_{0}=(-1)^{i}\beta_{2,i},\quad \alpha_{0,i}\beta_{1,j}=\alpha_{0,i+j+2},$$
$$\alpha_{0,i}\beta_{3,j}=(-1)^{i}\beta_{2,i+j+1},\quad \beta_{1,i}\beta_{0}=(-1)^{i}\beta_{3,i+1},\quad \beta_{1,i}\beta_{1,j}=\beta_{1,i+j+2},$$
$$\beta_{2,i}\beta_{1,j}=\beta_{2,i+j+2},\quad \beta_{3,i}\beta_{1,j}=\beta_{3,i+j+2},\ i,j\geq 0.$$
In particular,
\begin{equation*}
\mathrm{dim}\ \mathrm{H}^{k}(\mathcal{F}_{2,2}^{2})=\left\{
                \begin{array}{ll}
                  1, & \hbox{$k=0$;} \\
                   2, & \hbox{$k=1;$} \\
                 3, & \hbox{$k=2;$} \\
                  4, & \hbox{$k\geq 3.$} \\
                \end{array}
              \right.
\end{equation*}
\begin{proof}
For $k\geq 3$, by Lemma \ref{4}, let
\begin{equation*}
\begin{split}
\Phi: \mathrm{H}^{0}\left(\F X_0,\mathrm{H}^{k}(I)\right)&\longrightarrow \mathrm{H}^{k}(\mathcal{F}_{2,2}^{2}),\\
                             Y_{2}^{\ast^{k}} &\longmapsto Y_{2}^{\ast^{k}}-2kX_{0}^{\ast}\wedge X_{2}^{\ast}\wedge Y_{2}^{\ast^{k-2}},\\
         X_{1}^{\ast}\wedge Y_{2}^{\ast^{k-1}}-X_{2}^{\ast}\wedge Y_{1}^{\ast}\wedge Y_{2}^{\ast^{k-2}}&\longmapsto X_{1}^{\ast}\wedge Y_{2}^{\ast^{k-1}}-X_{2}^{\ast}\wedge Y_{1}^{\ast}\wedge Y_{2}^{\ast^{k-2}}\\
                                  &\quad\quad +2(k-2)X_{0}^{\ast}\wedge X_{1}^{\ast}\wedge X_{2}^{\ast}\wedge Y_{2}^{\ast^{k-3}},
\end{split}
\end{equation*}
and
\begin{equation*}
\begin{split}
\Psi: \mathrm{H}^{1}\left(\F X_0,\mathrm{H}^{k-1}(I)\right)&\longrightarrow \mathrm{H}^{k}(\mathcal{F}_{2,2}^{2}),\\
                 X_{0}^{\ast}\wedge Y_{2}^{\ast^{k-1}} &\longmapsto X_{0}^{\ast}\wedge Y_{2}^{\ast^{k-1}},\\
              X_{0}^{\ast}\wedge X_{1}^{\ast}\wedge Y_{2}^{\ast^{k-2}}-X_{0}^{\ast}\wedge X_{2}^{\ast}\wedge Y_{1}^{\ast}\wedge Y_{2}^{\ast^{k-3}}
&\longmapsto X_{0}^{\ast}\wedge X_{1}^{\ast}\wedge Y_{2}^{\ast^{k-2}}-X_{0}^{\ast}\wedge X_{2}^{\ast}\wedge Y_{1}^{\ast}\wedge Y_{2}^{\ast^{k-3}}.
\end{split}
\end{equation*}
Then $\Phi$ and $\Psi$ are injective  linear mappings, and $\mathrm{Im}\ \Phi \cap\mathrm{Im}\ \Psi=0$. Moreover, we have $$\mathrm{H}^{k}(\mathcal{F}_{2,2}^{2})=\Phi\left(\mathrm{H}^{0}\left(\F X_0,\mathrm{H}^{k}(I)\right)\right)\bigoplus \Psi\left(\mathrm{H}^{1}\left(\F X_0,\mathrm{H}^{k-1}(I)\right)\right). $$
Thus, $\mathrm{H}^{\bullet}(\mathcal{F}_{2,2}^{2})$ has a basis:
$$1,\quad Y_{1}^{\ast},\quad X_{0}^{\ast}\wedge Y_{2}^{\ast^{i}},\quad X_{1}^{\ast}\wedge Y_{2}^{\ast}-X_{2}^{\ast}\wedge Y_{1}^{\ast},\quad Y_{2}^{\ast^{i+2}}-2(i+2)X_{0}^{\ast}\wedge X_{2}^{\ast}\wedge Y_{2}^{\ast^{i}},$$
$$ X_{0}^{\ast}\wedge X_{1}^{\ast}\wedge Y_{2}^{\ast^{i+1}}-X_{0}^{\ast}\wedge X_{2}^{\ast}\wedge Y_{1}^{\ast}\wedge Y_{2}^{\ast^{i}},$$
$$\quad X_{1}^{\ast}\wedge Y_{2}^{\ast^{i+2}}-X_{2}^{\ast}\wedge Y_{1}^{\ast}\wedge Y_{2}^{\ast^{i+1}}+2(i+1)X_{0}^{\ast}\wedge X_{1}^{\ast}\wedge X_{2}^{\ast}\wedge Y_{2}^{\ast^{i}},\ i\geq 0.$$
The conclusion can be obtained by a direct calculation.
\end{proof}

\end{thm}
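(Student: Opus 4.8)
The plan is to combine the filtration description of Lemma~\ref{4} with an explicit lifting of its associated-graded representatives to genuine cocycles of the Chevalley--Eilenberg complex $\bigl(\bigwedge^{\bullet}(\mathcal{F}_{2,2}^{2})^{\ast},d\bigr)$, and then to read off the associative-superalgebra structure by computing cup products on those representatives.

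First I would record the differential on the dual standard basis. From $[X_0,X_1]=2[Y_1,Y_2]=X_2$, $[X_0,Y_1]=Y_2$, $[Y_1,Y_1]=X_1$ one obtains $d(X_0^{\ast})=d(Y_1^{\ast})=0$, $d(X_1^{\ast})=-\tfrac12 Y_1^{\ast^{2}}$, $d(X_2^{\ast})=X_0^{\ast}\wedge X_1^{\ast}-\tfrac12 Y_1^{\ast}\wedge Y_2^{\ast}$ and $d(Y_2^{\ast})=X_0^{\ast}\wedge Y_1^{\ast}$, whence by the signed Leibniz rule $d(Y_2^{\ast^{k}})=k\,X_0^{\ast}\wedge Y_1^{\ast}\wedge Y_2^{\ast^{k-1}}$. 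In particular the degrees $k=0,1,2$ are settled by direct inspection of cocycles and coboundaries: $\mathrm{H}^{0}=\F\cdot 1$, $\mathrm{H}^{1}=\F X_0^{\ast}\oplus\F Y_1^{\ast}$, and $\mathrm{H}^{2}$ is $3$-dimensional with representatives $X_0^{\ast}\wedge Y_2^{\ast}$, $X_1^{\ast}\wedge Y_2^{\ast}-X_2^{\ast}\wedge Y_1^{\ast}$ and $Y_2^{\ast^{2}}-4\,X_0^{\ast}\wedge X_2^{\ast}$.

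For $k\ge 3$ I would invoke Lemma~\ref{4}: parts $(3)$ and $(4)$ give explicit bases of the two summands of $\mathrm{H}^{k}(\mathcal{F}_{2,2}^{2})$, but these are only classes in the associated graded of the Hochschild--Serre filtration relative to $I$, so the crucial step is to lift each listed representative to a $d$-cocycle of $\bigwedge^{\bullet}(\mathcal{F}_{2,2}^{2})^{\ast}$. Using the computation above together with $d(X_0^{\ast}\wedge X_2^{\ast}\wedge Y_2^{\ast^{k-2}})=\tfrac12 X_0^{\ast}\wedge Y_1^{\ast}\wedge Y_2^{\ast^{k-1}}$ one finds that $Y_2^{\ast^{k}}-2k\,X_0^{\ast}\wedge X_2^{\ast}\wedge Y_2^{\ast^{k-2}}$ is closed; similarly $X_1^{\ast}\wedge Y_2^{\ast^{k-1}}-X_2^{\ast}\wedge Y_1^{\ast}\wedge Y_2^{\ast^{k-2}}$ becomes closed after adding a suitable multiple of $X_0^{\ast}\wedge X_1^{\ast}\wedge X_2^{\ast}\wedge Y_2^{\ast^{k-3}}$, while the two degree-$1$ representatives $X_0^{\ast}\wedge Y_2^{\ast^{k-1}}$ and $X_0^{\ast}\wedge X_1^{\ast}\wedge Y_2^{\ast^{k-2}}-X_0^{\ast}\wedge X_2^{\ast}\wedge Y_1^{\ast}\wedge Y_2^{\ast^{k-3}}$ are already closed because $X_0^{\ast}\wedge X_0^{\ast}=0$. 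This is precisely what the maps $\Phi$ and $\Psi$ in the statement do. Each of $\Phi,\Psi$ lands in $d$-cocycles by these computations; composing with the projection onto the degree-$0$ part of the associated graded, resp.\ with the identification of $F^{1}\mathrm{H}^{k}$ with its degree-$1$ part, one sees that $\Phi$ is a section of the former and $\Psi$ an isomorphism onto the latter (the correction terms lie in $F^{1}$, so they are invisible in the degree-$0$ quotient), so both are injective and $\mathrm{Im}\,\Phi\cap\mathrm{Im}\,\Psi=0$; a dimension count against Lemma~\ref{4}$(1)$ then gives $\mathrm{H}^{k}(\mathcal{F}_{2,2}^{2})=\mathrm{Im}\,\Phi\oplus\mathrm{Im}\,\Psi$. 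Together with the low-degree cases this yields the displayed basis of $\mathrm{H}^{\bullet}(\mathcal{F}_{2,2}^{2})$ and the Betti numbers $1,2,3,4,4,\dots$.

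Finally, naming the seven families of cocycles $\alpha_0,\alpha_1,\alpha_{0,i},\beta_0,\beta_{1,i},\beta_{2,i},\beta_{3,i}$ as in the statement (their $\mathbb{Z}$- and $\mathbb{Z}_2$-degrees are immediate from the wedge lengths and parities), I would compute every product of two basis cocycles in $\bigwedge^{\bullet}(\mathcal{F}_{2,2}^{2})^{\ast}$, reduce it modulo $d\bigl(\bigwedge^{\bullet}(\mathcal{F}_{2,2}^{2})^{\ast}\bigr)$, and express the result in the chosen basis; since each graded piece is spanned by at most four explicit elements, every reduction is a short linear check. The correction terms conspire to give exactly the relations of $\mathcal{U}_{2,2}^{2}$ --- for instance $(X_0^{\ast}\wedge Y_2^{\ast^{i}})\wedge\beta_0=(-1)^{i}\beta_{2,i}$, $\beta_{1,i}\wedge\beta_{1,j}=\beta_{1,i+j+2}$, $\beta_{1,i}\wedge\beta_0=(-1)^{i}\beta_{3,i+1}$, and every product involving $X_0^{\ast}$ twice vanishes --- so the assignment above is the desired $\mathbb{Z}$-graded superalgebra isomorphism. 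The main obstacle is bookkeeping rather than conceptual: pinning down the correction coefficients so that the $\Phi$- and $\Psi$-images are genuinely closed, and then keeping track of the super-signs and the interaction of the correction terms through the two dozen or so cup-product reductions; everything structural is dictated by Lemma~\ref{4} and the spectral sequence.
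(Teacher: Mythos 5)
Your proposal is correct and follows essentially the same route as the paper: the Hochschild--Serre decomposition of Lemma~\ref{4} relative to $I$, the explicit cocycle lifts $\Phi$ and $\Psi$ with the correction terms $-2kX_{0}^{\ast}\wedge X_{2}^{\ast}\wedge Y_{2}^{\ast^{k-2}}$ and $2(k-2)X_{0}^{\ast}\wedge X_{1}^{\ast}\wedge X_{2}^{\ast}\wedge Y_{2}^{\ast^{k-3}}$, and a direct cup-product computation on the resulting basis. The only difference is that you spell out the verification that the lifts are closed and treat the low degrees separately, which the paper leaves implicit.
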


\section{Characteristic $p>3$}  \label{mainresult}

Throughout this section the ground field $\F$ is an algebraically closed field of characteristic $p>3$.

\begin{lem}\label{2.7}

For $\mathcal{F}_{s,2}^{t_{s}}$, $s=1,2$, $t_{1}=1,2,3$, $t_{2}=1,3,4,5$, let $\mathcal{I}_{s,2}^{t_{s}}=[\mathcal{F}_{s,2}^{t_{s}},\mathcal{F}_{s,2}^{t_{s}}]$. For $k\geq 0$, $0\leq i\leq k$, the following conclusions hold:

(1) $\mathrm{E}_{\mathrm{2}}^{k-i,i}=\mathcal{O}_{k-i}\left(\left(\mathcal{F}_{s,2}^{t_{s}}/\mathcal{I}_{s,2}^{t_{s}}\right)^{\ast};\underline{t}\right)\bigotimes \mathcal{O}_{i}\left(\left(\mathcal{I}_{s,2}^{t_{s}}\right)^{\ast};\underline{t}\right)\Longrightarrow \mathrm{DPH}^{k}(\mathcal{F}_{s,2}^{t_{s}}).$

(2) $\mathrm{E}_{\infty}^{k-i,i}=\mathrm{E}_{3}^{k-i,i}.$

\begin{proof}
The proof is similar to Lemma \ref{2.5}.
\end{proof}
\end{lem}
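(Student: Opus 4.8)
The plan is to imitate the proof of Lemma~\ref{2.5}, replacing the ordinary Hochschild--Serre spectral sequence~(\ref{1.1}) by its divided-power analogue relative to the derived ideal $\mathcal{I}:=\mathcal{I}_{s,2}^{t_{s}}=[\mathcal{F}_{s,2}^{t_{s}},\mathcal{F}_{s,2}^{t_{s}}]$. First I would record, from the multiplication tables, that in each of the cases $t_{1}=1,2,3$ and $t_{2}=1,3,4,5$ the ideal $\mathcal{I}$ is \emph{central}: it is spanned by $Y_{2}$ when $s=1,\ t_{1}\in\{1,2\}$, by $X_{1},Y_{2}$ when $s=1,\ t_{1}=3$, and by $X_{2},Y_{2}$ when $s=2$, and in every one of these cases $[\mathcal{F}_{s,2}^{t_{s}},\mathcal{I}]=0$. (This is exactly where $\mathcal{F}_{2,2}^{2}$ has to be excluded and treated separately in Lemma~\ref{4}, since there $[Y_{1},Y_{2}]=\tfrac12 X_{2}\neq 0$.) Centrality gives that $\mathcal{F}_{s,2}^{t_{s}}/\mathcal{I}$ acts trivially on $\mathcal{I}$, hence on $\mathrm{DPH}^{\bullet}(\mathcal{I})$, and that both $\mathcal{I}$ and $\mathcal{F}_{s,2}^{t_{s}}/\mathcal{I}$ are abelian; for an abelian Lie superalgebra the divided-power differential $d$ vanishes identically, so $\mathrm{DPH}^{j}(\mathcal{I})=\mathcal{O}_{j}(\mathcal{I}^{\ast};\underline{t})$ and $\mathrm{DPH}^{j}(\mathcal{F}_{s,2}^{t_{s}}/\mathcal{I},M)=\mathcal{O}_{j}((\mathcal{F}_{s,2}^{t_{s}}/\mathcal{I})^{\ast};\underline{t})\otimes M$ for any trivial module $M$. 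Substituting into $\mathrm{E}_{2}^{k-i,i}=\mathrm{DPH}^{k-i}\!\big(\mathcal{F}_{s,2}^{t_{s}}/\mathcal{I},\,\mathrm{DPH}^{i}(\mathcal{I})\big)$ then yields part~(1).

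For part~(2) I would use the bigraded algebra isomorphism $\mathcal{O}\big((\mathcal{F}_{s,2}^{t_{s}})^{\ast};\underline{t}\big)\cong\mathcal{O}\big((\mathcal{F}_{s,2}^{t_{s}}/\mathcal{I})^{\ast};\underline{t}\big)\otimes\mathcal{O}(\mathcal{I}^{\ast};\underline{t})$, the two gradings being the $\mathbb{Z}$-degrees in the ``quotient variables'' $X_{0}^{\ast},X_{1}^{\ast},\dots,Y_{1}^{\ast}$ and in the ``ideal variables'' ($X_{1}^{\ast}$ or $X_{2}^{\ast}$, together with $Y_{2}^{\ast}$). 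The key point is that the full differential $d$ on $\mathcal{O}\big((\mathcal{F}_{s,2}^{t_{s}})^{\ast};\underline{t}\big)$ is \emph{bihomogeneous of bidegree $(2,-1)$}: it annihilates each generator of the quotient, because none of $X_{0},X_{1},\dots,Y_{1}$ is a value of a bracket; and for each ideal generator $z^{\ast}$ one has $d(z^{\ast})\in\mathcal{O}_{2}\big((\mathcal{F}_{s,2}^{t_{s}}/\mathcal{I})^{\ast};\underline{t}\big)$, since every bracket producing $X_{1}$, $X_{2}$ or $Y_{2}$ has both of its inputs outside $\mathcal{I}$, while $d\big(z^{\ast(r)}\big)=\pm\,d(z^{\ast})\,z^{\ast(r-1)}$ when $z$ is odd. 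Since the Hochschild--Serre filtration is precisely the decreasing filtration attached to this bigrading, the associated spectral sequence has $d_{0}=d_{1}=0$, has $d_{2}$ equal to $d$ itself, and has $d_{r}=0$ for every $r\geq3$ (these would be the vanishing components of $d$ of bidegree $(r,1-r)$); in particular $\mathrm{E}_{\infty}^{k-i,i}=\mathrm{E}_{3}^{k-i,i}$, which is part~(2).

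The only steps that are not pure bookkeeping are: (a) setting up the divided-power Hochschild--Serre spectral sequence and computing the divided-power cohomology of an abelian Lie superalgebra in characteristic $p>3$ --- here one must check that the height tuple $\underline{t}$ restricts coordinatewise under the splitting $(\mathcal{F}_{s,2}^{t_{s}})^{\ast}=(\mathcal{F}_{s,2}^{t_{s}}/\mathcal{I})^{\ast}\oplus\mathcal{I}^{\ast}$, so that the tensor decomposition above is an isomorphism of divided-power algebras and not merely of graded vector spaces; and (b) the centrality verification for the seven algebras, which is short but is what forces the spectral sequence to degenerate as stated. Everything else transfers verbatim from the proof of Lemma~\ref{2.5}, since that argument only ever uses bihomogeneity of $d$ and triviality of the coefficient module, both of which survive the passage from $\bigwedge^{\bullet}$ to $\mathcal{O}(\,\cdot\,;\underline{t})$.
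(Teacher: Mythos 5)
Your proof is correct and follows the same route the paper intends: the paper's own proof is just "similar to Lemma \ref{2.5}", whose argument is exactly your part (1) (centrality of $\mathcal{I}_{s,2}^{t_s}$ plus both the ideal and the quotient being abelian, so the coefficients are trivial and the cohomologies collapse to $\mathcal{O}_{\bullet}$) together with degeneration at $\mathrm{E}_3$. Your bidegree-$(2,-1)$ argument for part (2), and the explicit centrality check explaining why $\mathcal{F}_{2,2}^{2}$ is excluded, are the correct fleshing-out of details the paper leaves implicit.
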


\subsection{Divided power cohomology of $\mathcal{F}_{1,2}$}

\begin{thm}\label{3}
The following $\mathbb{Z}$-graded superalgebra isomorphisms hold:

(1)  $\mathrm{DPH}^{\bullet}(\mathcal{F}_{1,2}^{1})\cong \mathcal{U}_{1,2}^{1}(\underline{t}),$
where $\mathcal{U}_{1,2}^{1}(\underline{t})$ is a $p^{t_{1}-1}(4p^{t_{2}}+2p-2)$-dimensional $\mathbb{Z}$-graded superalgebra with a $\mathbb{Z}_{2}$-homogeneous basis
$$\{\alpha_{0}^{i,j},\alpha_{1,k},\alpha_{k},\alpha_{0,1}^{i,j},\alpha_{1}^{s,t},\alpha^{s,t}\mid 0\leq i\leq p^{t_{1}-1}-1,0\leq j\leq p^{t_{2}}-1,0\leq k\leq p^{t_{1}}-1,$$
$$\quad\quad\quad 1\leq s\leq p^{t_{1}-1},1\leq t\leq p^{t_{2}}-1\},$$
satisfying that
$$|\alpha_{0}^{i,j}|=|\alpha_{0,1}^{i,j}|=\overline{i+j}\ (\mathrm{mod}\ 2),\ |\alpha_{k}|=|\alpha_{1,k}|=\overline{k}\ (\mathrm{mod}\ 2),\ |\alpha^{s,t}|=|\alpha_{1}^{s,t}|=\overline{s+t-1}\ (\mathrm{mod}\ 2);$$
$$\|\alpha_{0}^{i,j}\|=ip+j+1,\ \|\alpha_{0,1}^{i,j}\|=ip+j+2,\ \|\alpha_{k}\|=k,\ \|\alpha_{1,k}\|=k+1,\ \|\alpha^{s,t}\|=sp+t-1,\ \|\alpha_{1}^{s,t}\|=sp+t,$$
 and the multiplication is given by
$$\alpha_{0}^{i_{1},j}\alpha_{1,i_{2}p}=(-1)^{i_{1}+j}\left(
                                                        \begin{array}{c}
                                                          (i_{1}+i_{2})p \\
                                                          i_{1}p \\
                                                        \end{array}
                                                      \right)
\alpha_{0,1}^{i_{1}+i_{2},j},\quad
\alpha_{0}^{i_{1},j}\alpha_{i_{2}p}=\left(
                                                        \begin{array}{c}
                                                          (i_{1}+i_{2})p \\
                                                          i_{1}p \\
                                                        \end{array}
                                                      \right)
\alpha_{0}^{i_{1}+i_{2},j},$$
$$\alpha_{1,k_{1}}\alpha_{k_{2}}=\left(
                                                        \begin{array}{c}
                                                          k_{1}+k_{2} \\
                                                          k_{1} \\
                                                        \end{array}
                                                      \right)
\alpha_{1,k_{1}+k_{2}},\quad
\alpha_{1,ip}\alpha^{s,t}=\left(
                                                        \begin{array}{c}
                                                          (i+s)p-1 \\
                                                          ip \\
                                                        \end{array}
                                                      \right)
\alpha_{1}^{i+s,t},$$
$$\alpha_{k_{1}}\alpha_{k_{2}}=\left(
                                                        \begin{array}{c}
                                                          k_{1}+k_{2} \\
                                                          k_{1} \\
                                                        \end{array}
                                                      \right)
\alpha_{k_{1}+k_{2}},\quad
\alpha_{0,1}^{i_{1},j}\alpha_{i_{2}p}=\left(
                                                        \begin{array}{c}
                                                          (i_{1}+i_{2})p \\
                                                          i_{1}p \\
                                                        \end{array}
                                                      \right)
\alpha_{0,1}^{i_{1}+i_{2},j},$$
$$\alpha_{1}^{s,t}\alpha_{ip}=\left(
                                                        \begin{array}{c}
                                                          (i+s)p-1 \\
                                                          ip \\
                                                        \end{array}
                                                      \right)
\alpha_{1}^{i+s,t},\quad
\alpha_{ip}\alpha^{s,t}=\left(
                                                        \begin{array}{c}
                                                          (i+s)p-1 \\
                                                          ip \\
                                                        \end{array}
                                                      \right)
\alpha^{i+s,t},$$
where $0\leq i,i_{1},i_{2}\leq p^{t_{1}-1}-1,$ $0\leq j\leq p^{t_{2}}-1,$ $0\leq k_{1},k_{2}\leq p^{t_{1}}-1,$ $1\leq s\leq p^{t_{1}-1},$ $1\leq t\leq p^{t_{2}}-1.$

(2)  $\mathrm{DPH}^{\bullet}(\mathcal{F}_{1,2}^{2})\cong \mathcal{U}_{1,2}^{2}(\underline{t}),$
where $\mathcal{U}_{1,2}^{2}(\underline{t})$ is a $p^{t_{1}-1}(4p^{t_{2}}+2p-2)$-dimensional $\mathbb{Z}$-graded superalgebra with a $\mathbb{Z}_{2}$-homogeneous basis
$$\{\alpha_{0}^{i,j},\alpha_{0,1}^{s,k},\alpha_{i,j},\alpha_{0}^{l},\alpha_{l},\alpha_{0,1,s,k}\mid 1\leq i\leq p^{t_{1}-1}, 1\leq j\leq p^{t_{2}}-1
,0\leq s\leq p^{t_{1}-1}-1, $$
$$\quad\quad\quad\quad\quad \quad 0\leq k\leq p^{t_{2}}-1,0\leq l\leq p^{t_{1}}-1\},$$
satisfying that
$$|\alpha_{0}^{i,j}|=|\alpha_{i,j}|=\overline{i+j-1}\ (\mathrm{mod}\ 2),\ |\alpha_{0,1}^{s,k}|=|\alpha_{0,1,s,k}|=\overline{s+k}\ (\mathrm{mod}\ 2),\ |\alpha_{0}^{l}|=|\alpha_{l}|=\overline{l}\ (\mathrm{mod}\ 2);$$
$$\| \alpha_{0}^{i,j}\|=ip+j,\ \|\alpha_{i,j}\|=ip+j-1,\ \|\alpha_{0,1}^{s,k}\|=sp+k+1,\ \|\alpha_{0,1,s,k}\|=sp+k+2,\ \|\alpha_{0}^{l}\|=l+1,\ \|\alpha_{l}\|=l,$$
 and the multiplication is given by
$$\alpha_{0}^{i,j}\alpha_{sp}=\left(
                                \begin{array}{c}
                                  (i+s)p-1 \\
                                  sp \\
                                \end{array}
                              \right)
\alpha_{0}^{i+s,j},\quad
\alpha_{0}^{sp}\alpha_{i,j}=\left(
                                \begin{array}{c}
                                  (i+s)p-1 \\
                                  sp \\
                                \end{array}
                              \right)
\alpha_{0}^{i+s,j},$$
$$\alpha_{i,j}\alpha_{sp}=\left(
                                \begin{array}{c}
                                  (i+s)p-1 \\
                                  sp \\
                                \end{array}
                              \right)
\alpha_{i+s,j},\quad
\alpha_{0}^{s_{1}p}\alpha_{0,1}^{s_{2},k}=(-1)^{s_{1}}\left(
                                                        \begin{array}{c}
                                                          (s_{1}+s_{2})p \\
                                                          s_{1}p \\
                                                        \end{array}
                                                      \right)
\alpha_{0,1,s_{1}+s_{2},k},$$
$$\alpha_{0,1}^{s_{1},k}\alpha_{s_{2}p}=\left(
                                                        \begin{array}{c}
                                                          (s_{1}+s_{2})p \\
                                                          s_{1}p \\
                                                        \end{array}
                                                      \right)
\alpha_{0,1}^{s_{1}+s_{2},k},\quad
\alpha_{0,1,s_{1},k}\alpha_{s_{2}p}=\left(
                                                        \begin{array}{c}
                                                          (s_{1}+s_{2})p \\
                                                          s_{1}p \\
                                                        \end{array}
                                                      \right)
\alpha_{0,1,s_{1}+s_{2},k},$$
$$\alpha_{0}^{l_{1}}\alpha_{l_{2}}=\left(
                                     \begin{array}{c}
                                       l_{1}+l_{2} \\
                                       l_{1} \\
                                     \end{array}
                                   \right)
\alpha_{0}^{l_{1}+l_{2}},\quad
\alpha_{l_{1}}\alpha_{l_{2}}=\left(
                                     \begin{array}{c}
                                       l_{1}+l_{2} \\
                                       l_{1} \\
                                     \end{array}
                                   \right)
\alpha_{l_{1}+l_{2}},$$
where $1\leq i\leq p^{t_{1}-1}$, $1\leq j\leq p^{t_{2}}-1$, $0\leq s,s_{1},s_{2}\leq p^{t_{1}-1}-1,$ $0\leq k\leq p^{t_{2}}-1,$ $0\leq l,l_{1},l_{2}\leq p^{t_{1}}-1.$

(3)  $\mathrm{DPH}^{\bullet}(\mathcal{F}_{1,2}^{3})\cong \mathcal{U}_{1,2}^{3}(\underline{t}),$
where $\mathcal{U}_{1,2}^{3}(\underline{t})$ is a $p^{t_{1}-1}(4p^{t_{2}}+1)$-dimensional $\mathbb{Z}$-graded superalgebra with a $\mathbb{Z}_{2}$-homogeneous basis
$$\{\alpha_{0,1}^{i,j},\alpha_{i},\alpha_{i}^{1},\alpha_{0}^{i,k},\alpha_{1}^{s,k},\alpha_{1,s},\alpha^{s,t}\mid 0\leq i\leq p^{t_{1}-1}-1,0\leq j\leq p^{t_{2}}-2, 0\leq k\leq p^{t_{2}}-1,$$
$$\quad\quad\quad \quad\quad 1\leq s\leq p^{t_{1}-1},1\leq t\leq p^{t_{2}}-1\},$$
satisfying that
$$|\alpha_{0,1}^{i,j}|=\overline{i+j}\ (\mathrm{mod}\ 2),\ |\alpha_{i}|=\overline{i}\ (\mathrm{mod}\ 2),\ |\alpha_{i}^{1}|=\overline{i+1}\ (\mathrm{mod}\ 2),$$
$$|\alpha_{0}^{i,k}|=\overline{i+k}\ (\mathrm{mod}\ 2),\ |\alpha_{1}^{s,k}|=\overline{s+k-1}\ (\mathrm{mod} 2),\ |\alpha_{1,s}|=\overline{s}\ (\mathrm{mod}\ 2),\
|\alpha^{s,t}|=\overline{s+t-1}\ (\mathrm{mod}\ 2);$$
$$ \|\alpha_{0,1}^{i,j}\|=ip+j+2,\ \|\alpha_{i}\|=ip,\ \|\alpha_{i}^{1}\|=ip+1,\ \|\alpha_{0}^{i,k}\|=ip+k+1,$$
$$ \|\alpha_{1}^{s,k}\|=sp+k,\ \|\alpha_{1,s}\|=sp-1,\ \|\alpha^{s,t}\|=sp+t-1,$$
 and the multiplication is given by
$$\alpha_{0,1}^{i_{1},j}\alpha_{i_{2}}=\left(
                                         \begin{array}{c}
                                           (i_{1}+i_{2})p \\
                                           i_{1}p \\
                                         \end{array}
                                       \right)
\alpha_{0,1}^{i_{1}+i_{2},j},\quad
\alpha_{i_{1}}\alpha_{i_{2}}=\left(
                                         \begin{array}{c}
                                           (i_{1}+i_{2})p \\
                                           i_{1}p \\
                                         \end{array}
                                       \right)
\alpha_{i_{1}+i_{2}},$$
$$\alpha_{i_{1}}\alpha_{i_{2}}^{1}=\left(
                                         \begin{array}{c}
                                           (i_{1}+i_{2})p \\
                                           i_{1}p \\
                                         \end{array}
                                       \right)
\alpha_{i_{1}+i_{1}}^{1},\quad
\alpha_{0}^{i_{1},k}\alpha_{i_{2}}=\left(
                                         \begin{array}{c}
                                           (i_{1}+i_{2})p \\
                                           i_{1}p \\
                                         \end{array}
                                       \right)
\alpha_{0}^{i_{1}+i_{2},k},$$
$$\alpha_{1}^{s,k}\alpha_{i}=\left(
                                         \begin{array}{c}
                                           (i+s)p-1 \\
                                           ip \\
                                         \end{array}
                                       \right)
\alpha_{1}^{i+s,k},\quad
\alpha_{1,s}\alpha_{i}=\left(
                                         \begin{array}{c}
                                           (i+s)p-2 \\
                                           ip \\
                                         \end{array}
                                       \right)
\alpha_{1,i+s},$$
$$\alpha^{s,t}\alpha_{i}=\left(
                                         \begin{array}{c}
                                           (i+s)p-1 \\
                                           ip \\
                                         \end{array}
                                       \right)
\alpha^{i+s,t},\quad
\alpha_{1,s}\alpha_{i}^{1}=\left(
                                         \begin{array}{c}
                                           (i+s)p-1 \\
                                           ip+1 \\
                                         \end{array}
                                       \right)
\alpha_{1}^{i+s,0},$$
$$\alpha_{0}^{i,k}\alpha_{1,s}=(-1)^{i+k+1}\left(
                                         \begin{array}{c}
                                           (i+s)p-2 \\
                                           ip \\
                                         \end{array}
                                       \right)
\frac{1}{2}\alpha^{i+s,k+1},$$
where $0\leq i, i_{1},i_{2}\leq p^{t_{1}-1}-1,$ $0\leq j\leq p^{t_{2}}-2,$ $0\leq k\leq p^{t_{2}}-1,$ $1\leq s\leq p^{t_{1}-1},$ $1\leq t\leq p^{t_{2}}-1.$

\begin{proof}
(1) For $k\geq 0$, $0\leq i\leq k$, consider the mapping
\begin{align*}
  d_{2}^{k-i,i}: \mathcal{O}_{k-i}((\mathcal{F}_{1,2}^{1}/\F Y_{2})^{\ast};\underline{t})\bigotimes \F Y_{2}^{\ast^{(i)}}&\longrightarrow \mathcal{O}_{k-i+2}((\mathcal{F}_{1,2}^{1}/\F Y_{2})^{\ast};\underline{t})\bigotimes \F Y_{2}^{\ast^{(i-1)}}, \\
 f\otimes Y_{2}^{\ast^{(i)}}& \longmapsto (-1)^{\|f\|}f\wedge d(Y_{2}^{\ast})\otimes Y_{2}^{\ast^{(i-1)}},
\end{align*}
where
\begin{equation*}
d(Y_{2}^{\ast})=X_{0}^{\ast}\wedge Y_{1}^{\ast}.
\end{equation*}
By Lemma \ref{2.7}, we have
\begin{equation*}
\mathrm{E}^{k-i,i}_{\infty}=\left\{
                \begin{array}{ll}
                 \frac{\mathcal{O}_{k}((\mathcal{F}_{1,2}^{1}/\F Y_{2})^{\ast};\underline{t})}{\mathcal{O}_{k-2}((\mathcal{F}_{1,2}^{1}/\F Y_{2})^{\ast};\underline{t})\bigwedge \F d(Y_{2}^{\ast})}, & \hbox{$i=0$;} \\
                   \frac{\mathrm{Ker}\ d^{k-i,i}_{\infty}}{\mathcal{O}_{k-2-i}((\mathcal{F}_{1,2}^{2}/\F Y_{2})^{\ast};\underline{t})\bigwedge \F d(Y_{2}^{\ast})\bigotimes \F Y_{2}^{\ast^{(i)}}}, & \hbox{$1\leq i\leq k-2$;} \\
                \F X_{0}^{\ast}\bigotimes Y_{2}^{\ast^{(k-1)}}, & \hbox{$i=k-1$;} \\               0, & \hbox{$i=k$.}\\
                \end{array}
              \right.
\end{equation*}
where
$$\mathrm{Ker}\ d^{k-i,i}_{\infty}=\F X_{0}^{\ast}\wedge Y_{1}^{\ast^{(k-i-1)}}\otimes Y_{2}^{\ast^{(i)}}\bigoplus\F X_{0}^{\ast}\wedge X_{1}^{\ast}\wedge Y_{1}^{\ast^{(k-i-2)}}\otimes Y_{2}^{\ast^{(i)}}$$
$$\bigoplus\F \delta_{k-i}X_{1}^{\ast}\wedge Y_{1}^{\ast^{(k-i-1)}}\otimes Y_{2}^{\ast^{(i)}}\bigoplus\F \delta_{k+1-i} Y_{1}^{\ast^{(k-i)}}\otimes Y_{2}^{\ast^{(i)}},$$
where $\delta_{a}=1$ when $a=0$(mod $p$) and $\delta_{a}=0$ otherwise.
From $\mathrm{DPH}^{k}(\mathcal{F}_{1,2}^{1})=\bigoplus\limits_{i=0}^{k}\mathrm{E}^{k-i,i}_{\infty}$, we can obtain the conclusion.

The proofs of $(2), (3)$ are similar to $(1)$.
\end{proof}

\end{thm}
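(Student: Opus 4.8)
The plan is to analyze each of the three algebras $\mathcal{F}_{1,2}^{t_1}$, $t_1=1,2,3$, by the divided-power Hochschild--Serre spectral sequence of Lemma~\ref{2.7}, taken relative to the derived subalgebra $\mathcal{I}=[\mathcal{F}_{1,2}^{t_1},\mathcal{F}_{1,2}^{t_1}]$, and then to read off the $\Z$-graded superalgebra structure from explicit cocycle representatives. One first checks that $\mathcal{I}$ is central in all three cases: it is $\F Y_2$ when $t_1=1,2$ and $\F X_1\oplus\F Y_2$ when $t_1=3$ (the latter because $[Y_1,Y_1]=X_1$). Hence $\mathcal{F}_{1,2}^{t_1}/\mathcal{I}$ acts trivially on $\mathcal{O}(\mathcal{I}^{\ast};\underline{t})$, the internal differential on $\mathcal{O}(\mathcal{I}^{\ast};\underline{t})$ vanishes, and $\mathrm{E}_2^{k-i,i}=\mathcal{O}_{k-i}((\mathcal{F}_{1,2}^{t_1}/\mathcal{I})^{\ast};\underline{t})\otimes\mathcal{O}_i(\mathcal{I}^{\ast};\underline{t})$; by Lemma~\ref{2.7}(2) the sequence collapses at $\mathrm{E}_3$, so the whole computation is carried by the single differential $d_2$, which is multiplication by the $d$-images of the ideal generators: $d(Y_2^{\ast})=X_0^{\ast}\wedge Y_1^{\ast}$ for $t_1=1$, $d(Y_2^{\ast})=(X_0^{\ast}+X_1^{\ast})\wedge Y_1^{\ast}$ for $t_1=2$, and $d(Y_2^{\ast})=X_0^{\ast}\wedge Y_1^{\ast}$ together with $d(X_1^{\ast})=-Y_1^{\ast^{(2)}}$ for $t_1=3$.

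Next I would compute $\mathrm{Ker}\,d_2$ and $\mathrm{Im}\,d_2$ on each bigraded piece. The essential new phenomenon over characteristic $p$ is the divided-power identity $Y_1^{\ast^{(b)}}\cdot Y_1^{\ast}=(b+1)\,Y_1^{\ast^{(b+1)}}$, a special instance of Eq.~(\ref{2}): wedging with $d(Y_2^{\ast})$ rescales the $Y_1^{\ast}$-exponent by $b+1$, so a monomial survives to $\mathrm{Ker}\,d_2$ exactly when the relevant exponent is $\equiv 0$ or $-1\pmod p$. This is precisely what cuts the naive $p^{t_1}$ down to $p^{t_1-1}$ in the dimension formula and, in the middle strip $1\le i\le k-2$, produces the two extra kernel classes $\delta_{k-i}\,X_1^{\ast}\wedge Y_1^{\ast^{(k-i-1)}}\otimes Y_2^{\ast^{(i)}}$ and $\delta_{k+1-i}\,Y_1^{\ast^{(k-i)}}\otimes Y_2^{\ast^{(i)}}$, where $\delta_a=1$ iff $a\equiv 0\pmod p$; these have no characteristic-zero counterpart. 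Splitting into the cases $i=0$, $1\le i\le k-2$, $i=k-1$, $i=k$ yields $\mathrm{E}_{\infty}^{k-i,i}=\mathrm{E}_3^{k-i,i}$ explicitly, whence $\mathrm{DPH}^{\bullet}$ with $\mathrm{DPH}^k(\mathcal{F}_{1,2}^{t_1})=\bigoplus_{i=0}^{k}\mathrm{E}_{\infty}^{k-i,i}$; summing dimensions cross-checks against $p^{t_1-1}(4p^{t_2}+2p-2)$ for $t_1=1,2$ and $p^{t_1-1}(4p^{t_2}+1)$ for $t_1=3$. To obtain the algebra rather than just the module, I then lift each $\mathrm{E}_{\infty}$-class to a genuine cocycle in $\mathcal{O}((\mathcal{F}_{1,2}^{t_1})^{\ast};\underline{t})$ by adding lower-filtration correction terms --- the same device used earlier to build the maps $\Phi,\Psi$ for $\mathcal{F}_{2,2}^{2}$ --- which produces the concrete representatives underlying the generators $\alpha_0^{i,j},\alpha_{1,k},\alpha_k,\alpha_{0,1}^{i,j},\alpha_1^{s,t},\alpha^{s,t}$, and so on.

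Finally, the cup product on $\mathrm{DPH}^{\bullet}$ is induced by the associative multiplication of $\mathcal{O}((\mathcal{F}_{1,2}^{t_1})^{\ast};\underline{t})$, so I would multiply the chosen representatives using Eq.~(\ref{2}) and collect terms. The binomial factors $\binom{k_1+k_2}{k_1}$, $\binom{(i_1+i_2)p}{i_1p}$, $\binom{(i+s)p-1}{ip}$, $\binom{(i+s)p-2}{ip}$, $\binom{(i+s)p-1}{ip+1}$ in the statement are exactly the factors $\binom{\underline{r}+\underline{s}}{\underline{r}}$, while the signs $(-1)^{i+j}$, $(-1)^{s_1}$, $(-1)^{i+k+1}$ are the exponent $\sum r_k s_j+\sum_{j<k}r_k s_j$ of Eq.~(\ref{2}) combined with the $(-1)^{\|f\|}$ in $d_2$; one then checks that each product lands in the span of the predicted $\alpha$, or is forced to $0$ once an even exponent would exceed $1$ or a divided exponent would exceed its bound. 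I expect the main obstacle to be exactly this bookkeeping together with the cocycle lifting, and most of all the case $t_1=3$: there $d_2$ is a genuine two-term Koszul differential because both $d(X_1^{\ast})$ and $d(Y_2^{\ast})$ contribute, so the kernel/image analysis is more delicate, and the factor $\tfrac12$ in $\alpha_0^{i,k}\alpha_{1,s}=(-1)^{i+k+1}\binom{(i+s)p-2}{ip}\tfrac12\,\alpha^{i+s,k+1}$ reflects the passage between the divided square $Y_1^{\ast^{(2)}}$ and $\tfrac12(Y_1^{\ast})^{2}$ when $d(X_1^{\ast})$ is absorbed into a representative.
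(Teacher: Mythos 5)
Your proposal follows essentially the same route as the paper: the Hochschild--Serre spectral sequence of Lemma \ref{2.7} relative to the (central) derived ideal, collapse at $\mathrm{E}_3$, explicit computation of $d_2$ as wedging with $d(Y_2^{\ast})$ (and $d(X_1^{\ast})$ for $\mathcal{F}_{1,2}^{3}$), identification of the extra kernel classes governed by the $\delta_a$ conditions coming from the divided-power relation $Y_1^{\ast^{(b)}}Y_1^{\ast}=(b+1)Y_1^{\ast^{(b+1)}}$, and finally lifting representatives and multiplying via Eq.~(\ref{2}). This matches the paper's argument (which carries out case (1) and declares (2), (3) similar), and your added detail on the two-term differential in case (3) is consistent with it.
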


\subsection{Divided power cohomology of $\mathcal{F}_{2,2}$}

\begin{thm}
The following $\mathbb{Z}$-graded superalgebra isomorphisms hold:

(1)  $\mathrm{DPH}^{\bullet}(\mathcal{F}_{2,2}^{1})\cong \mathcal{U}_{2,2}^{1}(\underline{t}),$
where $\mathcal{U}_{2,2}^{1}(\underline{t})$ is a $p^{t_{1}-1}(8p^{t_{2}}+4p-7)$-dimensional $\mathbb{Z}$-graded superalgebra with a $\mathbb{Z}_{2}$-homogeneous basis
$$\{\alpha_{i},\alpha_{0}^{s,j},\alpha_{1}^{i},\alpha_{0,2}^{s,j},\alpha_{0,1}^{s,t},\alpha_{0,1,2}^{s,j},\alpha_{u,t},
\alpha_{1}^{u,v},\alpha_{1,2}^{u,t},\beta_{u,h},\alpha_{1,2}^{i},\beta_{i}\mid 0\leq i\leq p^{t_{1}}-1,0\leq j\leq p^{t_{2}}-1 ,$$
$$0\leq s\leq p^{t_{1}-1}-1,1\leq t\leq p^{t_{2}}-1,1\leq u\leq p^{t_{1}-1},2\leq v\leq p^{t_{2}}-1,1\leq h\leq p^{t_{2}}-2 \},$$
satisfying that
$$|\alpha_{i}|=|\alpha_{1}^{i}|=|\alpha_{1,2}^{i}|=\overline{i}\ (\mathrm{mod}\ 2),\ |\beta_{i}|=\overline{i+1}\ (\mathrm{mod}\ 2),\
|\alpha_{0}^{s,j}|=|\alpha_{0,2}^{s,j}|=|\alpha_{0,1,2}^{s,j}|=\overline{s+j}\ (\mathrm{mod}\ 2),$$
$$ |\alpha_{0,1}^{s,t}|=\overline{s+t}\ (\mathrm{mod}\ 2),\ |\alpha_{u,t}|=|\alpha_{1,2}^{u,t}|=\overline{u+t-1}\ (\mathrm{mod}\ 2),$$
$$ |\alpha_{1}^{u,v}|=\overline{u+v-1}\ (\mathrm{mod} 2),\ |\beta_{u,h}|=\overline{u+h-1}\ (\mathrm{mod}\ 2);$$
 $$\|\alpha_{i}\|=i,\ \|\alpha_{1}^{i}\|=i+1,\ \|\alpha_{1,2}^{i}\|=\|\beta_{i}\|=i+2,\ \|\alpha_{0}^{s,j}\|=sp+j+1,$$
$$\|\alpha_{0,2}^{s,j}\|=sp+j+2,\ \|\alpha_{0,1,2}^{s,j}\|=sp+j+3,\ \|\alpha_{0,1}^{s,t}\|=sp+t+2,$$
$$\|\alpha_{u,t}\|=up+t-1,\ \|\alpha_{1,2}^{u,t}\|=up+t+1,\ \|\alpha_{1}^{u,v}\|=up+v,\ \|\beta_{u,h}\|=up+h,$$
and the multiplication is given by
$$\alpha_{i_{1}}\alpha_{i_{2}}=\left(
                                 \begin{array}{c}
                                  i_{1}+i_{2} \\
                                   i_{1} \\
                                 \end{array}
                               \right)
\alpha_{i_{1}+i_{2}},\quad
\alpha_{1}^{i_{1}}\alpha_{i_{2}}=\left(
                                   \begin{array}{c}
                                     i_{1}+i_{2} \\
                                     i_{1} \\
                                   \end{array}
                                 \right)
\alpha_{1}^{i_{1}+i_{2}},$$
$$\alpha_{1,2}^{i_{1}}\alpha_{i_{2}}=\left(
                                       \begin{array}{c}
                                         i_{1}+i_{2} \\
                                         i_{1}\\
                                       \end{array}
                                     \right)
\alpha_{1,2}^{i_{1}+i_{2}},\quad
\beta_{i_{1}}\alpha_{i_{2}}=\left(
                              \begin{array}{c}
                                i_{1}+i_{2} \\
                                i_{1} \\
                              \end{array}
                            \right)
\beta_{i_{1}+i_{2}},$$
$$\alpha_{0}^{s_{1},j}\alpha_{s_{2}p}=\left(
                                        \begin{array}{c}
                                          (s_{1}+s_{2})p \\
                                          s_{1}p \\
                                        \end{array}
                                      \right)
\alpha_{0}^{s_{1}+s_{2},j},\quad
\alpha_{0,2}^{s_{1},j}\alpha_{s_{2}p}=\left(
                                        \begin{array}{c}
                                          (s_{1}+s_{2})p \\
                                          s_{1}p \\
                                        \end{array}
                                      \right)
\alpha_{0,2}^{s_{1}+s_{2},j},$$
$$\alpha_{0,1}^{s_{1},t}\alpha_{s_{2}p}=\left(
                                          \begin{array}{c}
                                            (s_{1}+s_{2})p \\
                                            s_{1}p\\
                                          \end{array}
                                        \right)
\alpha_{0,1}^{s_{1}+s_{2},t},\quad
\alpha_{0,1,2}^{s_{1},j}\alpha_{s_{2}p}=\left(
                                          \begin{array}{c}
                                            (s_{1}+s_{2})p \\
                                            s_{1}p\\
                                          \end{array}
                                        \right)
\alpha_{0,1,2}^{s_{1}+s_{2},j},$$
$$\alpha_{0,2}^{s_{1},j}\alpha_{s_{2}p+1}=-\left(
                                          \begin{array}{c}
                                            (s_{1}+s_{2})p \\
                                            s_{1}p\\
                                          \end{array}
                                        \right)
\alpha_{0,1}^{s_{1}+s_{2},j+1},\quad
\alpha_{0}^{s_{1},j}\alpha_{1}^{s_{2}p}=(-1)^{s_{1}+j}\left(
                                          \begin{array}{c}
                                            (s_{1}+s_{2})p \\
                                            s_{1}p\\
                                          \end{array}
                                        \right)
\alpha_{0,1}^{s_{1}+s_{2},j},$$
$$\alpha_{0}^{s_{1},j}\alpha_{1,2}^{s_{2}p}=\left(
                                          \begin{array}{c}
                                            (s_{1}+s_{2})p \\
                                            s_{1}p\\
                                          \end{array}
                                        \right)
\alpha_{0,1,2}^{s_{1}+s_{2},j},\quad
\alpha_{1}^{s_{1}p}\alpha_{0,2}^{s_{2},j}=-\left(
                                          \begin{array}{c}
                                            (s_{1}+s_{2})p \\
                                            s_{1}p\\
                                          \end{array}
                                        \right)
\alpha_{0,1,2}^{s_{1}+s_{2},j},$$
$$\alpha_{sp}\alpha_{u,t}=\left(
                            \begin{array}{c}
                              (s+u)p-1 \\
                              sp \\
                            \end{array}
                          \right)
\alpha_{s+u,t},\quad
\alpha_{1}^{u,v}\alpha_{sp}=\left(
                            \begin{array}{c}
                              (s+u)p-1 \\
                              sp \\
                            \end{array}
                          \right)
\alpha_{1}^{s+u,v},$$
$$\alpha_{1,2}^{u,t}\alpha_{sp}=\left(
                            \begin{array}{c}
                              (s+u)p-1 \\
                              sp \\
                            \end{array}
                          \right)
\alpha_{1,2}^{s+u,t},\quad
\beta_{u,h}\alpha_{sp}=\left(
                            \begin{array}{c}
                              (s+u)p-1 \\
                              sp \\
                            \end{array}
                          \right)
\beta_{s+u,h},$$
$$\beta_{u,h}\alpha_{sp+1}=-\left(
                            \begin{array}{c}
                              (s+u)p-1 \\
                              sp \\
                            \end{array}
                          \right)
\alpha_{1}^{s+u,h+1},\quad
\alpha_{1}^{sp}\alpha_{u,t}=\left(
                            \begin{array}{c}
                              (s+u)p-1 \\
                              sp \\
                            \end{array}
                          \right)
\alpha_{1}^{s+u,t},$$
$$\alpha_{1}^{sp}\beta_{u,h}=(-1)^{s}\left(
                            \begin{array}{c}
                              (s+u)p-1 \\
                              sp \\
                            \end{array}
                          \right)
\alpha_{1,2}^{s+u,h},\quad
\alpha_{1,2}^{sp}\alpha_{u,t}=\left(
                            \begin{array}{c}
                              (s+u)p-1 \\
                              sp \\
                            \end{array}
                          \right)
\alpha_{1,2}^{s+u,t},$$
$$\beta_{sp}\alpha_{u,t}=\left(
                            \begin{array}{c}
                              (s+u)p-1 \\
                              sp \\
                            \end{array}
                          \right)
(t+1)\alpha_{1}^{s+u,t+1},\quad
\beta_{u,h}\beta_{sp}=(-1)^{u+h}\left(
                            \begin{array}{c}
                              (s+u)p-1 \\
                              sp \\
                            \end{array}
                          \right)
h\alpha_{1,2}^{u+s,h+1},$$
$$\alpha_{0}^{s_{1},j}\beta_{s_{2}p}=(-1)^{s_{1}+j}\left(
                                          \begin{array}{c}
                                            (s_{1}+s_{2})p \\
                                            s_{1}p\\
                                          \end{array}
                                        \right)
(j+2)\alpha_{0,1}^{s_{1}+s_{2},j+1},$$
$$\alpha_{0,2}^{s_{1},j}\beta_{s_{2}p}=(-1)^{s_{1}+j+1}\left(
                                          \begin{array}{c}
                                            (s_{1}+s_{2})p \\
                                            s_{1}p\\
                                          \end{array}
                                        \right)
(j+1)\alpha_{0,1,2}^{s_{1}+s_{2},j+1},$$
$$\alpha_{1}^{i_{1}}\beta_{i_{2}}=(-1)^{i_{1}+1}\left(
                                                  \begin{array}{c}
                                                    i_{1}+i_{2}+1 \\
                                                    i_{1} \\
                                                  \end{array}
                                                \right)
(i_{2}+1)\alpha_{1,2}^{i_{1}+i_{2}+1},\quad 0\leq i_{1}, i_{2}\leq p^{t_{1}}-1,$$
where $0\leq s,s_{1},s_{2}\leq p^{t_{1}-1}-1$, $0\leq j\leq p^{t_{2}}-1$, $1\leq t\leq p^{t_{2}}-1$, $1\leq u\leq p^{t_{1}-1}$, $2\leq v\leq p^{t_{2}}-1$, $1\leq h\leq p^{t_{2}}-2$.

(2) $\mathrm{DPH}^{\bullet}(\mathcal{F}_{2,2}^{3})\cong \mathcal{U}_{2,2}^{3}(\underline{t}),$
where $\mathcal{U}_{2,2}^{3}(\underline{t})$ is a $p^{t_{1}-1}(8p^{t_{2}}+1)$-dimensional $\mathbb{Z}$-graded superalgebra with a $\mathbb{Z}_{2}$-homogeneous basis
$$\{\alpha_{1}^{i},\alpha_{1}^{i,1},\alpha_{0}^{i,j},\alpha_{0,1}^{i,j},\alpha_{1,2}^{s},\alpha_{1,2}^{s,j},\alpha_{1,s,h},\beta_{s},\beta_{i,t},\beta_{1,i,t},\beta_{2,s,t},\gamma_{i},\gamma_{i}^{1},\gamma_{s,h}\mid 0\leq i\leq p^{t_{1}-1}-1,$$
$$0\leq j\leq p^{t_{2}}-1,1\leq s\leq p^{t_{1}-1},0\leq t\leq p^{t_{2}}-2,1\leq h\leq p^{t_{2}}-1\},$$
satisfying that
$$|\alpha_{1}^{i}|=|\gamma_{i}|=\overline{i}\ (\mathrm{mod}\ 2),\ |\alpha_{1}^{i,1}|=|\gamma_{i}^{1}|=\overline{i+1}\ (\mathrm{mod}\ 2),\
|\alpha_{1,2}^{s}|=|\beta_{s}|=\overline{s}\ (\mathrm{mod}\ 2),$$
$$|\alpha_{0}^{i,j}|=|\alpha_{0,1}^{i,j}|=\overline{i+j}\ (\mathrm{mod}\ 2),\ |\beta_{i,t}|=|\beta_{1,i,t}|=\overline{i+t}\ (\mathrm{mod}\ 2),\
|\beta_{2,s,t}|=\overline{s+t-1}\ (\mathrm{mod} 2),$$
$$|\alpha_{1,2}^{s,j}|=\overline{s+j-1}\ (\mathrm{mod}\ 2),\ |\alpha_{1,s,h}|=|\gamma_{s,h}|=\overline{s+h-1}\ (\mathrm{mod}\ 2);$$
 $$\|\gamma_{i}\|=ip,\ \|\alpha_{1}^{i}\|=\|\gamma_{i}^{1}\|=ip+1,\ \|\alpha_{1}^{i,1}\|=ip+2,\ \|\beta_{s}\|=sp-1,$$
 $$\|\alpha_{1,2}^{s}\|=sp,\ \|\alpha_{0}^{i,j}\|=ip+j+1,\ \|\alpha_{0,1}^{i,j}\|=ip+j+2,\ \|\beta_{i,t}\|=ip+t+2, $$
$$\|\beta_{1,i,t}\|=ip+t+3,\ \|\beta_{2,s,t}\|=sp+t,\ \|\alpha_{1,2}^{s,j}\|=sp+j+1,\ \|\alpha_{1,s,h}\|=sp+h,\ \|\gamma_{s,h}\|=sp+h-1,$$
 and the multiplication is given by
$$\alpha_{1,s,h}\gamma_{i}=\left(
                             \begin{array}{c}
                               (i+s)p-1 \\
                               ip \\
                             \end{array}
                           \right)
\alpha_{1,i+s,h},\quad
\alpha_{1,2}^{s,j}\gamma_{i}=\left(
                             \begin{array}{c}
                               (i+s)p-1 \\
                               ip \\
                             \end{array}
                           \right)
\alpha_{1,2}^{i+s,j},$$
$$\gamma_{s,h}\gamma_{i}=\left(
                             \begin{array}{c}
                               (i+s)p-1 \\
                               ip \\
                             \end{array}
                           \right)\gamma_{i+s,h},
\quad \alpha_{1,2}^{s}\gamma_{i}=\left(
                             \begin{array}{c}
                               (i+s)p-1 \\
                               ip \\
                             \end{array}
                           \right)\alpha_{1,2}^{i+s},$$
$$\beta_{s}\gamma_{i}=\left(
                             \begin{array}{c}
                               (i+s)p-1 \\
                               ip \\
                             \end{array}
                           \right)\beta_{i+s},
\quad \beta_{2,s,t}\gamma_{i}^{1}=\left(
                             \begin{array}{c}
                               (i+s)p-1 \\
                               ip+1\\
                             \end{array}
                           \right)\alpha_{1,i+s,t+1},$$
$$\alpha_{1,2}^{s}\gamma_{i}^{1}=\left(
                             \begin{array}{c}
                               (i+s)p-1 \\
                               ip+1\\
                             \end{array}
                           \right)\alpha_{1,2}^{i+s,0},\quad \beta_{s}\gamma_{i}^{1}=\left(
                             \begin{array}{c}
                               (i+s)p-1 \\
                               ip+1\\
                             \end{array}
                           \right)2\beta_{2,i+s,0},$$
$$\alpha_{1}^{i_{1}}\beta_{i_{2},t}=\left(
                                     \begin{array}{c}
                                       (i_{1}+i_{2})p \\
                                       i_{1}p \\
                                     \end{array}
                                   \right)\beta_{1,i_{1}+i_{2},t},\quad
\alpha_{0}^{i,j}\alpha_{1,2}^{s}=\left(
                             \begin{array}{c}
                               (i+s)p-1 \\
                               ip\\
                             \end{array}
                           \right)\frac{1}{2}\alpha_{1,i+s,j+1},$$
$$\alpha_{0,1}^{i,j}\beta_{s}=(-1)^{i+j+1}\left(
                             \begin{array}{c}
                               (i+s)p-1 \\
                               ip\\
                             \end{array}
                           \right)\alpha_{1,i+s,j+1},
\quad\alpha_{1}^{i}\gamma_{s,h}=\left(
                             \begin{array}{c}
                               (i+s)p-1 \\
                               ip\\
                             \end{array}
                           \right)\alpha_{1,i+s,h},$$
$$\alpha_{1}^{i}\beta_{2,s,t}=(-1)^{i}\left(
                             \begin{array}{c}
                               (i+s)p-1 \\
                               ip\\
                             \end{array}
                           \right)\alpha_{1,2}^{i+s,t},
\quad \alpha_{1}^{i}\beta_{s}=(-1)^{i}\left(
                             \begin{array}{c}
                               (i+s)p-1 \\
                               ip\\
                             \end{array}
                           \right)2\alpha_{1,2}^{i+s},$$
$$\alpha_{1}^{i,1}\beta_{s}=(-1)^{i+1}\left(
                             \begin{array}{c}
                               (i+s)p-1 \\
                               ip+1\\
                             \end{array}
                           \right)2\alpha_{1,2}^{i+s,0},
\quad\beta_{2,s,t}\gamma_{i}=\left(
                             \begin{array}{c}
                               (i+s)p-1 \\
                               ip\\
                             \end{array}
                           \right)\beta_{2,i+s,t},$$
$$\alpha_{1,2}^{s}\beta_{i,t}=\left(
                             \begin{array}{c}
                               (i+s)p-1 \\
                               ip+1\\
                             \end{array}
                           \right)\alpha_{1,2}^{i+s,t+1},
\quad\beta_{i,t}\beta_{s}=(-1)^{i+t}\left(
                             \begin{array}{c}
                               (i+s)p-1 \\
                               ip+1\\
                             \end{array}
                           \right)2\beta_{2,i+s,t+1},$$
$$\gamma_{i_{1}}\gamma_{i_{2}}=\left(
                                 \begin{array}{c}
                                   (i_{1}+i_{2})p \\
                                   i_{1}p \\
                                 \end{array}
                               \right)
\gamma_{i_{1}+i_{2}},\quad
\gamma_{i_{1}}\gamma_{i_{2}}^{1}=\left(
                                 \begin{array}{c}
                                   (i_{1}+i_{2})p \\
                                   i_{1}p \\
                                 \end{array}
                               \right)
\gamma_{i_{1}+i_{2}}^{1},$$
$$\alpha_{0}^{i_{1},j}\gamma_{i_{2}}=\left(
                                 \begin{array}{c}
                                   (i_{1}+i_{2})p \\
                                   i_{1}p \\
                                 \end{array}
                               \right)
\alpha_{0}^{i_{1}+i_{2},j},\quad
\alpha_{0,1}^{i_{1},j}\gamma_{i_{2}}=\left(
                                       \begin{array}{c}
                                         (i_{1}+i_{2})p \\
                                         i_{1}p \\
                                       \end{array}
                                     \right)
\alpha_{0,1}^{i_{1}+i_{2},j},
$$
$$\alpha_{1}^{i_{1}}\gamma_{i_{2}}=\left(
                                     \begin{array}{c}
                                       (i_{1}+i_{2})p \\
                                       i_{1}p \\
                                     \end{array}
                                   \right)
\alpha_{1}^{i_{1}+i_{2}},\quad
\alpha_{1}^{i_{1},1}\gamma_{i_{2}}=\left(
                                     \begin{array}{c}
                                       (i_{1}+i_{2})p \\
                                       i_{1}p \\
                                     \end{array}
                                   \right)
\alpha_{1}^{i_{1}+i_{2},1},
$$
$$\beta_{i_{1},t}\gamma_{i_{2}}=\left(
                                     \begin{array}{c}
                                       (i_{1}+i_{2})p \\
                                       i_{1}p \\
                                     \end{array}
                                   \right)
\beta_{i_{1}+i_{2},t},\quad
\beta_{1,i_{1},t}\gamma_{i_{2}}=\left(
                                     \begin{array}{c}
                                       (i_{1}+i_{2})p \\
                                       i_{1}p \\
                                     \end{array}
                                   \right)
\beta_{1,i_{1}+i_{2},t},
$$
$$\gamma_{i_{1}}^{1}\gamma_{i_{2}}^{1}=2\left(
                                     \begin{array}{c}
                                       (i_{1}+i_{2})p \\
                                       i_{1}p \\
                                     \end{array}
                                   \right)\alpha_{0,1}^{i_{1}+i_{2},0},\quad
\alpha_{1}^{i_{1}}\gamma_{i_{2}}^{1}=\left(
                                     \begin{array}{c}
                                       (i_{1}+i_{2})p \\
                                       i_{1}p \\
                                     \end{array}
                                   \right)\alpha_{1}^{i_{1}+i_{2},1},
$$
$$\beta_{i_{1},t}\gamma_{i_{2}}^{1}=2\left(
                                     \begin{array}{c}
                                       (i_{1}+i_{2})p \\
                                       i_{1}p \\
                                     \end{array}
                                   \right)\alpha_{0,1}^{i_{1}+i_{2},t+1},\quad
\alpha_{0}^{i_{1},j}\alpha_{1}^{i_{2}}=(-1)^{i_{1}+j}\left(
                                     \begin{array}{c}
                                       (i_{1}+i_{2})p \\
                                       i_{1}p \\
                                     \end{array}
                                   \right)\alpha_{0,1}^{i_{1}+i_{2},j},
$$
$$
\alpha_{0}^{i,j}\beta_{s}=(-1)^{i+j+1}\left(
                             \begin{array}{c}
                               (i+s)p-1 \\
                               ip\\
                             \end{array}
                           \right)\gamma_{i+s,j+1},$$
$$\beta_{s}\beta_{1,i,t}=(-1)^{s-1}\left(
                             \begin{array}{c}
                               (i+s)p-1 \\
                               ip+1\\
                             \end{array}
                           \right)2\alpha_{1,2}^{i+s,t+1},$$
$$\beta_{i,t_{1}}\beta_{2,s,t_{2}}=(-1)^{i+t_{1}}\left(
                             \begin{array}{c}
                               (i+s)p-1 \\
                               ip+1\\
                             \end{array}
                           \right)
\left(
                                                 \begin{array}{c}
                                                   t_{1}+t_{2}+1 \\
                                                   t_{1}+1 \\
                                                 \end{array}
                                               \right)
\alpha_{1,i+s,t_{1}+t_{2}+2},
$$
$$\beta_{i_{1},t_{1}}\beta_{i_{2},t_{2}}=2\left(
                                     \begin{array}{c}
                                       (i_{1}+i_{2})p \\
                                       i_{1}p \\
                                     \end{array}
                                   \right)
\left(
                                     \begin{array}{c}
                                       t_{1}+t_{2}+2\\
                                       t_{1}+1 \\
                                     \end{array}
                                   \right)
\alpha_{0,1}^{i_{1}+i_{2},t_{1}+t_{2}+2},
$$
where $0\leq i, i_{1},i_{2}\leq p^{t_{1}-1}-1$, $1\leq s\leq p^{t_{1}-1}$, $1\leq h\leq p^{t_{2}}-1$, $0\leq j\leq p^{t_{2}}-1$, $0\leq t, t_{1}, t_{2}\leq p^{t_{2}}-2$.

(3) $\mathrm{DPH}^{\bullet}(\mathcal{F}_{2,2}^{4})\cong \mathcal{U}_{2,2}^{4}(\underline{t})\ltimes \mathcal{V}_{2,2}^{4}(\underline{t}),$
where $\mathcal{U}_{2,2}^{4}(\underline{t})$ is a $p^{t_{1}-1}(2p+1)$-dimensional $\mathbb{Z}$-graded superalgebra with a $\mathbb{Z}_{2}$-homogeneous basis
$$\{\alpha_{i},\alpha_{0,i},\alpha_{1,s}\mid 0\leq i\leq p^{t_{1}}-1,0\leq s\leq p^{t_{1}-1}-1\},$$
satisfying that
$$|\alpha_{i}|=|\alpha_{0,i}|=\overline{i}\ (\mathrm{mod}\ 2),\ |\alpha_{1,s}|=\overline{s}\ (\mathrm{mod}\ 2);$$
 $$\|\alpha_{i}\|=i,\ \|\alpha_{0,i}\|=i+1,\ \|\alpha_{1,s}\|=sp+1, $$
and the multiplication is given by
$$\alpha_{i_{1}}\alpha_{i_{2}}=\left(
                         \begin{array}{c}
                           i_{1}+i_{2} \\
                           i_{1} \\
                         \end{array}
                       \right)
\alpha_{i_{1}+i_{2}},\quad \alpha_{0,i_{1}}\alpha_{i_{2}}=\left(
                         \begin{array}{c}
                           i_{1}+i_{2} \\
                           i_{1} \\
                         \end{array}
                       \right)\alpha_{0,i_{1}+i_{2}},$$
$$\alpha_{1,s_{1}}\alpha_{s_{2}p}=\left(
                         \begin{array}{c}
                           (s_{1}+s_{2})p \\
                           s_{1}p \\
                         \end{array}
                       \right)\alpha_{1,s_{1}+s_{2}},
\quad\alpha_{1,s}\alpha_{qp+r}=-\left(
                         \begin{array}{c}
                           (s+q)p+r \\
                           sp \\
                         \end{array}
                       \right)\alpha_{0,(s+q)p+r},$$
where $0\leq i_{1},i_{2}\leq p^{t_{1}}-1,$ $0\leq s_{1},s_{2}\leq p^{t_{1}-1}-1,$ $0\leq s\leq p^{t_{1}-1}-1,$ $0\leq qp+r\leq p^{t_{1}}-1,$ $1\leq r\leq p-1.$
$\mathcal{V}_{2,2}^{4}(\underline{t})$ is a $2p^{t_{1}-1}(4p^{t_{2}}+p-3)-1$-dimensional $\mathbb{Z}$-graded superalgebra with a $\mathbb{Z}_{2}$-homogeneous basis
$$\{\alpha_{0,2,i},\beta_{0,2,i},\beta_{0,1,s}^{j},\beta_{0,1,2,s}^{j},\alpha_{0,1,s}^{j},
\alpha_{0,1,2,s}^{k},\alpha_{1,2,s},\alpha_{h}^{j},\alpha_{0,h}^{j},\alpha_{0,2,h}^{j},\beta_{0,2,h}^{\rho},\alpha_{1,2,l}^{1}\mid 0 \leq i\leq p^{t_{1}}-1,$$
$$1\leq j\leq p^{t_{2}}-1,0\leq s\leq p^{t_{1}-1}-1,1\leq h\leq p^{t_{1}-1},0\leq k\leq p^{t_{2}}-1,1\leq l\leq p^{t_{1}-1}-1, 1\leq \rho \leq p^{t_{2}}-2\},$$
satisfying that
$$|\alpha_{0,2,i}|=\overline{i}\ (\mathrm{mod}\ 2),\ |\beta_{0,2,i}|=\overline{i+1}\ (\mathrm{mod}\ 2),\ |\alpha_{1,2,s}|=\overline{s}\ (\mathrm{mod}\ 2),$$
$$|\alpha_{1,2,l}^{1}|=\overline{l+1}\ (\mathrm{mod}\ 2),\
|\alpha_{0,1,2,s}^{k}|=\overline{s+k}\ (\mathrm{mod}\ 2),\
|\beta_{0,2,h}^{\rho}|=\overline{h+\rho-1}\ (\mathrm{mod}\ 2),$$
$$|\beta_{0,1,s}^{j}|=|\beta_{0,1,2,s}^{j}|=|\alpha_{0,1,s}^{j}|=\overline{s+j}\ (\mathrm{mod}\ 2),\
|\alpha_{h}^{j}|=|\alpha_{0,h}^{j}|=|\alpha_{0,2,h}^{j}|=\overline{h+j-1}\ (\mathrm{mod} 2);$$
$$\|\alpha_{0,2,i}\|=\|\beta_{0,2,i}\|=i+2,\
\|\alpha_{1,2,s}\|=sp+2,\
\|\alpha_{1,2,l}^{1}\|=lp+3,\
\|\beta_{0,1,s}^{j}\|=sp+j+1,$$
$$\|\beta_{0,1,2,s}^{j}\|=\|\alpha_{0,1,s}^{j}\|=sp+j+2,\
\|\alpha_{0,1,2,s}^{k}\|=sp+k+3,\
\|\alpha_{h}^{j}\|=hp+j-1,\
\|\alpha_{0,h}^{j}\|=hp+j,$$
$$\|\alpha_{0,2,h}^{j}\|=hp+j+1,\
\|\beta_{0,2,h}^{\rho}\|=hp+\rho,$$
and the multiplication
$$\alpha_{0,2,sp}\alpha_{h}^{j}=\left(
                                  \begin{array}{c}
                                    (s+h)p-1 \\
                                    sp \\
                                  \end{array}
                                \right)
\alpha_{0,2,s+h}^{j},\quad
\alpha_{1,2,s}\alpha_{h}^{j}=-\left(
                                \begin{array}{c}
                                  (s+h)p-1  \\
                                  sp \\
                                \end{array}
                              \right)
\alpha_{0,2,s+h}^{j},$$
$$\beta_{0,2,sp}\alpha_{h}^{j}=\left(
                                \begin{array}{c}
                                  (s+h)p-1  \\
                                  sp \\
                                \end{array}
                              \right)(j+1)\alpha_{0,s+h}^{j+1},
\quad \beta_{0,1,s_{1}}^{j}\alpha_{1,2,s_{2}}=\left(
                                          \begin{array}{c}
                                            (s_{1}+s_{2})p \\
                                            s_{1}p \\
                                          \end{array}
                                        \right)
\alpha_{0,1,2,s_{1}+s_{2}}^{j},$$
$$\beta_{0,2,s_{1}p}\alpha_{1,2,s_{2}}=\left(
                                              \begin{array}{c}
                                                (s_{1}+s_{2})p \\
                                                s_{1}p \\
                                              \end{array}
                                            \right)\alpha_{0,1,2,s_{1}+s_{2}}^{1},$$
$$\alpha_{0,2,s_{1}p}\beta_{0,1,s_{2}}^{j}=(-1)^{s_{1}+1}\left(
                                                           \begin{array}{c}
                                                             (s_{1}+s_{2})p \\
                                                             s_{1}p \\
                                                           \end{array}
                                                         \right)
\alpha_{0,1,2,s_{1}+s_{2}}^{j},$$
$$\beta_{0,1,s_{1}}^{j}\beta_{0,2,s_{2}p}=(-1)^{s_{1}+j+1}\left(
                                                           \begin{array}{c}
                                                             (s_{1}+s_{2})p \\
                                                             s_{1}p \\
                                                           \end{array}
                                                         \right)(j+2)\alpha_{0,1,s_{1}+s_{2}}^{j+1},$$
$$ \beta_{0,2,sp}\beta_{0,2,h}^{\rho}=(-1)^{s+1}\left(
                                                           \begin{array}{c}
                                                             (s+h)p-1 \\
                                                             sp+1 \\
                                                           \end{array}
                                                         \right)\rho\alpha_{0,2,s+h}^{\rho+1},$$
$$\beta_{0,2,s_{1}p}\beta_{0,1,2,s_{2}}^{j}=\left(
                                              \begin{array}{c}
                                                (s_{1}+s_{2})p \\
                                                s_{1}p \\
                                              \end{array}
                                            \right)
(j+1)\alpha_{0,1,2,s_{1}+s_{2}}^{j+1},$$
where $0\leq s,s_{1},s_{2}\leq p^{t_{1}-1}-1,$ $1\leq h\leq p^{t_{1}-1},$ $1\leq j\leq p^{t_{2}}-1,$ $1\leq \rho\leq p^{t_{2}}-2.$
The multiplication between $\mathcal{U}_{2,2}^{4}(\underline{t})$ and $\mathcal{V}_{2,2}^{4}(\underline{t})$ is graded-supercommutative,
and the multiplication is given by
$$\alpha_{1,2,0}\alpha_{1}=-\alpha_{0,1,0}^{1}-\alpha_{0,2,1},\quad
\alpha_{1,2,l}\alpha_{sp+1}=\left(
                              \begin{array}{c}
                                (s+l)p \\
                                sp \\
                              \end{array}
                            \right)
\alpha_{1,2,s+l}^{1},$$
$$ \beta_{0,2,0}\alpha_{1,0}=-2\alpha_{0,1,0}^{1}-\alpha_{0,2,1},\quad
\alpha_{0,s_{1}p}\beta_{0,1,s_{2}}^{j}=(-1)^{s_{1}}\left(
                                                             \begin{array}{c}
                                                               (s_{1}+s_{2})p \\
                                                               s_{1}p \\
                                                             \end{array}
                                                           \right)
\alpha_{0,1,s_{1}+s_{2}}^{j},$$
$$\alpha_{1,2,l}^{1}\alpha_{hp-1}=\left(
                                    \begin{array}{c}
                                      (h+l)p \\
                                      lp+1 \\
                                    \end{array}
                                  \right)
\alpha_{1,2,h+l},\quad
\alpha_{0,hp-1}\alpha_{1,2,l}^{1}=\left(
                                    \begin{array}{c}
                                      (h+l)p \\
                                      lp+1 \\
                                    \end{array}
                                  \right)
\alpha_{0,1,2,h+l}^{0},$$
$$\alpha_{h}^{j}\alpha_{sp}=\left(
                              \begin{array}{c}
                                (s+h)p-1 \\
                                sp \\
                              \end{array}
                            \right)
\alpha_{s+h}^{j},\quad \beta_{0,1,s_{1}}^{j}\alpha_{s_{2}p}=\left(
                                                              \begin{array}{c}
                                                                (s_{1}+s_{2})p \\
                                                                s_{1}p \\
                                                              \end{array}
                                                            \right)
\beta_{0,1,s_{1}+s_{2}}^{j},$$
$$\beta_{0,2,h}^{\rho}\alpha_{sp}=\left(
                                    \begin{array}{c}
                                      (s+h)p-1 \\
                                      sp \\
                                    \end{array}
                                  \right)
\beta_{0,2,s+h}^{\rho},
\quad\beta_{0,2,h}^{\rho}\alpha_{sp+1}=\left(
                                                      \begin{array}{c}
                                                                          (s+h)p-1 \\
                                                                             sp+1 \\
                                                                              \end{array}
                                                                           \right)\alpha_{0,s+h}^{\rho+1},$$
$$ \alpha_{0,h}^{j}\alpha_{sp}=
\left(
                                                      \begin{array}{c}
                                                                          (s+h)p-1 \\
                                                                             sp \\
                                                                              \end{array}
                                                                           \right)
\alpha_{0,s+h}^{j},
\quad \beta_{0,1,2,s_{1}}^{j}\alpha_{s_{2}p}=\left(
                                            \begin{array}{c}
                                              (s_{1}+s_{2})p \\
                                              s_{1}p \\
                                            \end{array}
                                          \right)
\beta_{0,1,2,s_{1}+s_{2}}^{j},$$
$$\beta_{0,1,2,s_{1}}^{j}\alpha_{s_{2}p+1}=-\left(
                                              \begin{array}{c}
                                                (s_{1}+s_{2})p \\
                                                s_{1}p \\
                                              \end{array}
                                            \right)
\alpha_{0,1,s_{1}+s_{2}}^{j+1},
\quad \alpha_{0,2,h}^{j}\alpha_{sp}=\left(
                                      \begin{array}{c}
                                        (s+h)p-1 \\
                                        sp \\
                                      \end{array}
                                    \right)
\alpha_{0,2,s+h}^{j},$$
$$\alpha_{0,1,s_{1}}^{j}\alpha_{s_{2}p}=\left(
                                          \begin{array}{c}
                                            (s_{1}+s_{2})p \\
                                            s_{1}p \\
                                          \end{array}
                                        \right)
\alpha_{0,1,s_{1}+s_{2}}^{j},
\quad\alpha_{0,1,2,s_{1}}^{k}\alpha_{s_{2}p}=\left(
                                \begin{array}{c}
                                 (s_{1}+s_{2})p \\
                                            s_{1}p \\
                                 \end{array}
                                  \right)\alpha_{0,1,2,s_{1}+s_{2}}^{k},$$
$$ \alpha_{0,2,i_{1}}\alpha_{i_{2}}=\left(
                                \begin{array}{c}
                                 i_{1}+i_{2} \\
                                 i_{1}\\
                                 \end{array}
                                  \right)
\alpha_{0,2,i_{1}+i_{2}},\quad \alpha_{1,2,s_{1}}\alpha_{s_{2}p}=\left(
                                \begin{array}{c}
                                 (s_{1}+s_{2})p \\
                                            s_{1}p \\
                                 \end{array}
                                  \right)\alpha_{1,2,s_{1}+s_{2}},$$
$$\alpha_{1,2,s}\alpha_{lp+1}=\left(
                                \begin{array}{c}
                                  (s+l)p \\
                                  sp \\
                                \end{array}
                              \right)
\alpha_{1,2,s+l}^{1},\quad \alpha_{1,2,s}\alpha_{q_{1}p+r_{1}}=-\left(
                                                                  \begin{array}{c}
                                                                    (s+q_{1})p+r_{1} \\
                                                                    sp \\
                                                                  \end{array}
                                                                \right)
\alpha_{0,2,(s+q_{1})p+r_{1}},$$
$$\alpha_{1,2,l}^{1}\alpha_{sp}=\left(
                                  \begin{array}{c}
                                    (s+l)p \\
                                    sp \\
                                  \end{array}
                                \right)
\alpha_{1,2,s+l}^{1},
\quad \alpha_{1,2,l}^{1}\alpha_{q_{2}p+r_{2}}=-\left(
                                                 \begin{array}{c}
                                                   (q_{2}+l)p+r+1 \\
                                                   lp+1 \\
                                                 \end{array}
                                               \right)
\alpha_{0,2,(q_{2}+l)p+r_{2}+1},$$
$$\quad \beta_{0,2,i_{1}}\alpha_{i_{2}}=\left(
                                  \begin{array}{c}
                                    i_{1}+i_{2} \\
                                    i_{1} \\
                                  \end{array}
                                \right)
\beta_{0,2,i_{1}+i_{2}},\quad
\alpha_{0,sp}\alpha_{h}^{j}=\left(
                              \begin{array}{c}
                                (s+h)p-1 \\
                                sp \\
                              \end{array}
                            \right)
\alpha_{0,s+h}^{j},$$
$$\alpha_{0,s_{1}p}\beta_{0,1,2,s_{2}}^{j}=\left(
                                             \begin{array}{c}
                                               (s_{1}+s_{2})p \\
                                               s_{1}p \\
                                             \end{array}
                                           \right)
\alpha_{0,1,2,s_{1}+s_{2}}^{j},\quad \alpha_{0,s_{1}p}\alpha_{1,2,s_{2}}=\left(
                                             \begin{array}{c}
                                               (s_{1}+s_{2})p \\
                                               s_{1}p \\
                                             \end{array}
                                           \right)\alpha_{0,1,2,s_{1}+s_{2}}^{0},$$
$$\quad \alpha_{1,s}\alpha_{h}^{j}=-\left(
                                      \begin{array}{c}
                                        (s+h)p-1 \\
                                        sp \\
                                      \end{array}
                                    \right)
\alpha_{0,s+h}^{j},
\quad\alpha_{1,s_{1}}\alpha_{0,2,s_{2}p}=-\left(
                                                                         \begin{array}{c}
                                                                           (s_{1}+s_{2})p \\
                                                                           s_{1}p \\
                                                                         \end{array}
                                                                       \right)
\alpha_{0,1,2,s_{1}+s_{2}}^{0},$$
$$\beta_{0,2,lp}\alpha_{1,s}=(-1)^{l+1}\left(
                                         \begin{array}{c}
                                           (s+l)p \\
                                           sp \\
                                         \end{array}
                                       \right)
\alpha_{0,1,s+l}^{1}+(-1)^{l}\left(
                                         \begin{array}{c}
                                           (s+l)p+1 \\
                                           sp \\
                                         \end{array}
                                       \right)\alpha_{1,2,s+l}^{1},$$
$$\beta_{0,2,sp}\alpha_{1,l}=(-1)^{s+1}\left(
                                         \begin{array}{c}
                                           (s+l)p \\
                                           sp \\
                                         \end{array}
                                       \right)
\alpha_{0,1,s+l}^{1}+(-1)^{s}\left(
                                         \begin{array}{c}
                                           (s+l)p+1 \\
                                           lp \\
                                         \end{array}
                                       \right)\alpha_{1,2,s+l}^{1},$$
$$\alpha_{0,sp}\beta_{0,2,h}^{\rho}=(-1)^{s+1}\left(
                                                                                 \begin{array}{c}
                                                                                   (s+h)p-1 \\
                                                                                   sp \\
                                                                                 \end{array}
                                                                               \right)
\alpha_{0,2,s+h}^{\rho},$$
$$\alpha_{0,i_{1}}\beta_{0,2,i_{2}}=(-1)^{i_{1}}(i_{2}+1)\left(
                                                           \begin{array}{c}
                                                             i_{1}+i_{2}+1 \\
                                                             i_{1} \\
                                                           \end{array}
                                                         \right)
\alpha_{0,2,i_{1}+i_{2}+1},$$
$$\beta_{0,2,h}^{\rho}\alpha_{1,s}=(-1)^{h+\rho}\left(
                                                  \begin{array}{c}
                                                    (s+h)p-1 \\
                                                    sp \\
                                                  \end{array}
                                                \right)
\alpha_{0,2,s+h}^{\rho},$$
$$ \beta_{0,1,s_{1}}^{j}\alpha_{1,s_{2}}=(-1)^{s_{1}+j}\left(
                                                                                       \begin{array}{c}
                                                                                         (s_{1}+s_{2})p \\
                                                                                         s_{1}p \\
                                                                                       \end{array}
                                                                                     \right)
\alpha_{0,1,s_{1}+s_{2}}^{j},$$
$$\beta_{0,1,2,s_{1}}^{j}\alpha_{1,s_{2}}=(-1)^{s_{1}+j+1}\left(
                                             \begin{array}{c}
                                               (s_{1}+s_{2})p \\
                                               s_{1}p \\
                                             \end{array}
                                           \right)
\alpha_{0,1,2,s_{1}+s_{2}}^{j},$$
$$\beta_{0,2,q_{3}p+r_{3}}\alpha_{1,s}=(-1)^{q_{3}+r_{3}+1}(r_{3}+1)\left(
                                                                      \begin{array}{c}
                                                                        (s+q_{3})p+r_{3}+1 \\
                                                                        q_{3}p+r_{3}+1 \\
                                                                      \end{array}
                                                                    \right)
\alpha_{0,2,(s+q_{3})p+r_{3}+1},$$
where $0\leq i_{1},i_{2},q_{1}p+r_{1},q_{2}p+r_{2},q_{3}p+r_{3}\leq p^{t_{1}}-1,$ $0\leq s,s_{1},s_{2}\leq p^{t_{1}-1}-1,$ $1\leq h\leq p^{t_{1}-1},$ $0\leq k\leq p^{t_{2}}-1,$
$1\leq l\leq p^{t_{1}-1}-1,$ $1\leq j\leq p^{t_{2}}-1,$ $1\leq \rho\leq p^{t_{2}}-2,$ $2\leq r_{1}\leq p-1,$ $1\leq r_{2},r_{3}\leq p-2.$

(4) $\mathrm{DPH}^{\bullet}(\mathcal{F}_{2,2}^{5})\cong \mathcal{U}_{2,2}^{5}(\underline{t}),$
where $\mathcal{U}_{2,2}^{5}(\underline{t})$ is a $p^{t_{1}-1}(8p^{t_{2}}+2)$-dimensional $\mathbb{Z}$-graded superalgebra with a $\mathbb{Z}_{2}$-homogeneous basis
$$\{\alpha_{i},\alpha_{1}^{i},\alpha_{0,1}^{i},\alpha_{0}^{i},\alpha_{0}^{k,l},\alpha_{0,1}^{i,j},\alpha_{0,2}^{k,j},\alpha_{0,2}^{k},\alpha_{k,l},\beta_{k},
\beta_{0,1}^{i,j},\beta_{1,i,s},\beta_{2,i,s},\beta_{3,k,j}\mid 0\leq i\leq p^{t_{1}-1}-1,$$
$$\quad\quad\quad \quad\quad\quad \quad\quad \quad\quad\quad0\leq j\leq p^{t_{2}}-1,0\leq s\leq p^{t_{2}}-2,1\leq k\leq p^{t_{1}-1},1\leq l\leq p^{t_{2}}-1\},$$
satisfying that
$$|\alpha_{i}|=|\alpha_{0}^{i}|=\overline{i}\ (\mathrm{mod}\ 2),\ |\alpha_{0,1}^{i}|=|\alpha_{1}^{i}|=\overline{i+1}\ (\mathrm{mod}\ 2),\
 |\alpha_{0,1}^{i,j}|=|\beta_{0,1}^{i,j}|=\overline{i+j}\ (\mathrm{mod}\ 2),$$
$$|\beta_{1,i,s}|=|\beta_{2,i,s}|=\overline{i+s}\ (\mathrm{mod}\ 2),\ |\alpha_{0}^{k,l}|=|\alpha_{k,l}|=\overline{k+l-1}\ (\mathrm{mod}\ 2),$$
$$|\alpha_{0,2}^{k,j}|=|\beta_{3,k,j}|=\overline{k+j-1}\ (\mathrm{mod}\ 2),\ |\alpha_{0,2}^{k}|=|\beta_{k}|=\overline{k}\ (\mathrm{mod}\ 2);$$
$$\|\alpha_{i}\|=ip,\ \|\alpha_{0}^{i}\|=\|\alpha_{1}^{i}\|=ip+1,\ \|\alpha_{0,1}^{i}\|=ip+2,\ \|\alpha_{0,1}^{i,j}\|=ip+j+2,$$
 $$\|\beta_{0,1}^{i,j}\|=ip+j+1,\ \|\beta_{1,i,s}\|=ip+s+2,\ \|\beta_{2,i,s}\|=ip+s+3,\ \|\alpha_{0}^{k,l}\|=kp+l,$$
$$\|\alpha_{k,l}\|=kp+l-1,\ \|\alpha_{0,2}^{k,j}\|=kp+j+1,\ \|\beta_{3,k,j}\|=kp+j,\ \|\alpha_{0,2}^{k}\|=kp,\ \|\beta_{k}\|=kp-1,$$
 and the multiplication is given by
$$\alpha_{0,1}^{i_{1},j}\alpha_{i_{2}}=\left(
                                         \begin{array}{c}
                                           (i_{1}+i_{2})p \\
                                           i_{1}p \\
                                         \end{array}
                                       \right)
\alpha_{0,1}^{i_{1}+i_{2},j},\quad
\alpha_{0}^{i_{1}}\beta_{0,1}^{i_{2},j}=(-1)^{i_{1}}\left(
                                         \begin{array}{c}
                                           (i_{1}+i_{2})p \\
                                           i_{1}p \\
                                         \end{array}
                                       \right)
\alpha_{0,1}^{i_{1}+i_{2},j},$$
$$\beta_{0,1}^{i_{1},j}\alpha_{i_{2}}=\left(
                                         \begin{array}{c}
                                           (i_{1}+i_{2})p \\
                                           i_{1}p \\
                                         \end{array}
                                          \right)
\beta_{0,1}^{i_{1}+i_{2},j},\quad
\alpha_{0}^{i_{1}}\alpha_{i_{2}}=\left(
                                         \begin{array}{c}
                                           (i_{1}+i_{2})p \\
                                           i_{1}p \\
                                         \end{array}
                                       \right)
\alpha_{0}^{i_{1}+i_{2}},$$
$$\alpha_{0}^{i_{1}}\alpha_{1}^{i_{2}}=\left(
                                         \begin{array}{c}
                                           (i_{1}+i_{2})p \\
                                           i_{1}p \\
                                         \end{array}
                                       \right)
\alpha_{0,1}^{i_{1}+i_{2}},\quad
\alpha_{0}^{i_{1}}\beta_{1,i_{2},s}=\left(
                                         \begin{array}{c}
                                           (i_{1}+i_{2})p \\
                                           i_{1}p \\
                                         \end{array}
                                       \right)
\beta_{2,i_{1}+i_{2},s},$$
$$\alpha_{0,1}^{i_{1}}\alpha_{i_{2}}=\left(
                                         \begin{array}{c}
                                           (i_{1}+i_{2})p \\
                                           i_{1}p \\
                                         \end{array}
                                       \right)
\alpha_{0,1}^{i_{1}+i_{2}},\quad
\alpha_{i_{1}}\alpha_{i_{2}}=\left(
                                         \begin{array}{c}
                                           (i_{1}+i_{2})p \\
                                           i_{1}p \\
                                         \end{array}
                                       \right)
\alpha_{i_{1}+i_{2}},$$
$$\alpha_{i_{1}}\alpha_{1}^{i_{2}}=\left(
                                         \begin{array}{c}
                                           (i_{1}+i_{2})p \\
                                           i_{1}p \\
                                         \end{array}
                                       \right)
\alpha_{1}^{i_{1}+i_{2}},\quad
\beta_{1,i_{1},s}\alpha_{i_{2}}=\left(
                                         \begin{array}{c}
                                           (i_{1}+i_{2})p \\
                                           i_{1}p \\
                                         \end{array}
                                       \right)
\beta_{1,i_{1}+i_{2},s},$$
$$\beta_{2,i_{1},s}\alpha_{i_{2}}=\left(
                                         \begin{array}{c}
                                           (i_{1}+i_{2})p \\
                                           i_{1}p \\
                                         \end{array}
                                       \right)
\beta_{2,i_{1}+i_{2},s},\quad
\alpha_{0,1}^{i,j}\beta_{k}=(-1)^{i+j+1}\left(
                                          \begin{array}{c}
                                            (i+k)p-2 \\
                                            ip \\
                                          \end{array}
                                        \right)
\frac{1}{2}\alpha_{0}^{i+k,j+1},$$
$$\alpha_{0,2}^{k}\alpha_{i}=\left(
                                          \begin{array}{c}
                                            (i+k)p-2 \\
                                            ip \\
                                          \end{array}
                                        \right)
\alpha_{0,2}^{i+k},\quad
\alpha_{0,2}^{k}\beta_{0,1}^{i,j}=(-1)^{k}\left(
                                          \begin{array}{c}
                                            (i+k)p-2 \\
                                            ip \\
                                          \end{array}
                                        \right)
\frac{1}{2}\alpha_{0}^{i+k,j+1},$$
$$\beta_{k}\alpha_{i}=\left(
                                          \begin{array}{c}
                                            (i+k)p-2 \\
                                            ip \\
                                          \end{array}
                                        \right)
\beta_{i+k},\quad
\beta_{0,1}^{i,j}\beta_{k}=(-1)^{i+j+1}\left(
                                          \begin{array}{c}
                                            (i+k)p-2 \\
                                            ip \\
                                          \end{array}
                                        \right)
\frac{1}{2}\alpha_{i+k,j+1},$$
$$\alpha_{0}^{i}\alpha_{k,l}=\left(
                                          \begin{array}{c}
                                            (i+k)p-1 \\
                                            ip \\
                                          \end{array}
                                        \right)
\alpha_{0}^{i+k,l},\quad
\alpha_{0}^{i}\beta_{k}=(-1)^{i}\left(
                                          \begin{array}{c}
                                            (i+k)p-2 \\
                                            ip \\
                                          \end{array}
                                        \right)
\alpha_{0,2}^{i+k},$$
$$\alpha_{0}^{k,l}\alpha_{i}=\left(
                                          \begin{array}{c}
                                            (i+k)p-1 \\
                                            ip \\
                                          \end{array}
                                        \right)
\alpha_{0}^{i+k,l},\quad
\alpha_{0,2}^{k,j}\alpha_{i}=\left(
                                          \begin{array}{c}
                                            (i+k)p-1 \\
                                            ip \\
                                          \end{array}
                                        \right)
\alpha_{0,2}^{i+k,j},$$
$$\alpha_{i}\alpha_{k,l}=\left(
                                          \begin{array}{c}
                                            (i+k)p-1 \\
                                            ip \\
                                          \end{array}
                                        \right)
\alpha_{i+k,l},\quad
\beta_{3,k,j}\alpha_{i}=\left(
                                          \begin{array}{c}
                                            (i+k)p-1 \\
                                            ip \\
                                          \end{array}
                                        \right)
\beta_{3,i+k,j},$$
$$\alpha_{0}^{i}\beta_{3,k,j}=(-1)^{i}\left(
                                          \begin{array}{c}
                                            (i+k)p-1 \\
                                            ip \\
                                          \end{array}
                                        \right)
\alpha_{0,2}^{i+k,j},\quad
\beta_{k}\alpha_{1}^{i}=\left(
                                          \begin{array}{c}
                                            (i+k)p-1 \\
                                            ip+1 \\
                                          \end{array}
                                        \right)
\beta_{3,i+k,0},$$
$$\alpha_{0,1}^{i}\beta_{k}=(-1)^{i+1}\left(
                                          \begin{array}{c}
                                            (i+k)p-1 \\
                                            ip+1 \\
                                          \end{array}
                                        \right)
\alpha_{0,2}^{i+k,0},\quad
\alpha_{0,2}^{k}\alpha_{1}^{i}=\left(
                                          \begin{array}{c}
                                            (i+k)p-1 \\
                                            ip+1 \\
                                          \end{array}
                                        \right)
\alpha_{0,2}^{i+k,0},$$
$$\alpha_{0,2}^{k}\beta_{1,i,s}=\left(
                                          \begin{array}{c}
                                            (i+k)p-1 \\
                                            ip+1 \\
                                          \end{array}
                                        \right)
\alpha_{0,2}^{i+k,s+1},\quad
\beta_{k}\beta_{1,i,s}=\left(
                                          \begin{array}{c}
                                            (i+k)p-1 \\
                                            ip+1 \\
                                          \end{array}
                                        \right)
\beta_{3,i+k,s+1},$$
$$\beta_{2,i,s}\beta_{k}=(-1)^{i+s}\left(
                                          \begin{array}{c}
                                            (i+k)p-1 \\
                                            ip+1 \\
                                          \end{array}
                                        \right)
\alpha_{0,2}^{i+k,s+1},\quad
\alpha_{1}^{i_{1}}\alpha_{1}^{i_{2}}=\left(
                                          \begin{array}{c}
                                            (i_{1}+i_{2})p+2 \\
                                            i_{1}p+1 \\
                                          \end{array}
                                        \right)
\alpha_{0,1}^{i_{1}+i_{2},0},$$
$$\beta_{1,i_{1},s}\alpha_{1}^{i_{2}}=\left(
                                          \begin{array}{c}
                                            (i_{1}+i_{2})p+2 \\
                                            i_{1}p+1 \\
                                          \end{array}
                                        \right)
\alpha_{0,1}^{i_{1}+i_{2},s+1},\quad
\beta_{3,k,j}\alpha_{1}^{i}=-\left(
                                          \begin{array}{c}
                                            (i+k)p-2 \\
                                            ip+1 \\
                                          \end{array}
                                        \right)
\frac{1}{2}\alpha_{0}^{i+k,j+1},$$
$$\beta_{1,i_{1},s_{1}}\beta_{1,i_{2},s_{2}}=\left(
                                          \begin{array}{c}
                                            (i_{1}+i_{2})p+2\\
                                            i_{1}p+1 \\
                                          \end{array}
                                        \right)
\left(
                                          \begin{array}{c}
                                            s_{1}+s_{2}+2 \\
                                            s_{1}+1 \\
                                          \end{array}
                                        \right)
\alpha_{0,1}^{i_{1}+i_{2},s_{1}+s_{2}+2},$$
$$\beta_{3,k,j}\beta_{1,i,s}=-\left(
                                          \begin{array}{c}
                                            (i+k)p-2 \\
                                            ip+1 \\
                                          \end{array}
                                        \right)
\left(
                                          \begin{array}{c}
                                            j+s+1 \\
                                            j \\
                                          \end{array}
                                        \right)
\frac{1}{2}\alpha_{0}^{i+k,j+s+2},$$
where $0 \leq i,i_{1},i_{2}\leq p^{t_{1}-1}-1,$ $0\leq j\leq p^{t_{2}}-1,$ $0\leq s,s_{1},s_{2}\leq p^{t_{2}}-2,$
$1\leq k\leq p^{t_{1}-1},$ $1\leq l\leq p^{t_{2}}-1.$

\end{thm}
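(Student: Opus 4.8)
The plan is to treat each of the four cases $t\in\{1,3,4,5\}$ by exactly the spectral-sequence method already used for $\mathcal{F}_{1,2}$ in Theorem~\ref{3}, applied to the derived ideal $\mathcal{I}_{2,2}^{t}=[\mathcal{F}_{2,2}^{t},\mathcal{F}_{2,2}^{t}]=\mathrm{span}_{\F}\{X_{2},Y_{2}\}$. In each of these four algebras $\mathcal{I}_{2,2}^{t}\subseteq C(\mathcal{F}_{2,2}^{t})$, so $\mathcal{F}_{2,2}^{t}/\mathcal{I}_{2,2}^{t}$ acts trivially on $(\mathcal{I}_{2,2}^{t})^{\ast}$ and both $\mathcal{I}_{2,2}^{t}$ and $\mathcal{F}_{2,2}^{t}/\mathcal{I}_{2,2}^{t}$ are abelian; hence Lemma~\ref{2.7} gives
$$\mathrm{E}_{2}^{k-i,i}=\mathcal{O}_{k-i}\bigl((\mathcal{F}_{2,2}^{t}/\mathcal{I}_{2,2}^{t})^{\ast};\underline{t}\bigr)\bigotimes\mathcal{O}_{i}\bigl((\mathcal{I}_{2,2}^{t})^{\ast};\underline{t}\bigr)\Longrightarrow\mathrm{DPH}^{k}(\mathcal{F}_{2,2}^{t}),\qquad \mathrm{E}_{\infty}^{k-i,i}=\mathrm{E}_{3}^{k-i,i}.$$
So the whole computation reduces to understanding a single differential $d_{2}$ and then reading off a basis and the induced product.

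Next I would write down $d_{2}^{k-i,i}\colon f\otimes u^{(\underline{s})}\mapsto \pm\,f\wedge d(\text{generator of }\mathcal{I}_{2,2}^{t}{}^{\ast})\otimes u^{(\underline{s}-\varepsilon)}$ explicitly from the structure constants, using the divided-power Leibniz rule from Section~2. The four cases differ only in these "connecting" brackets: for $\mathcal{F}_{2,2}^{1}$ one has $d(X_{2}^{\ast})=X_{0}^{\ast}\wedge X_{1}^{\ast}$, $d(Y_{2}^{\ast})=X_{0}^{\ast}\wedge Y_{1}^{\ast}$; for $\mathcal{F}_{2,2}^{3}$ there is an extra $Y_{1}^{\ast^{(2)}}$-term in $d(X_{2}^{\ast})$; for $\mathcal{F}_{2,2}^{4}$ an extra $X_{1}^{\ast}\wedge Y_{1}^{\ast}$-term in $d(Y_{2}^{\ast})$; and for $\mathcal{F}_{2,2}^{5}$ both extra terms appear. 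I would then compute $\mathrm{Ker}\,d_{2}^{k-i,i}$ and $\mathrm{Im}\,d_{2}^{k-i,i}$ in each bidegree. Here the divided-power combinatorics of Eq.~(\ref{2}) force the binomial factors $\binom{\underline r+\underline s}{\underline r}$ and the divisibility indicators $\delta_{a}$ ($\delta_{a}=1$ iff $p\mid a$) that already showed up in the proof of Theorem~\ref{3}; the cocycles that survive are the expected wedge monomials in $X_{0}^{\ast},X_{1}^{\ast},Y_{1}^{\ast}$ times a divided power of $Y_{2}^{\ast}$, subject to such divisibility constraints. Summing $\mathrm{DPH}^{k}(\mathcal{F}_{2,2}^{t})=\bigoplus_{i=0}^{k}\mathrm{E}_{\infty}^{k-i,i}$ then yields a homogeneous basis; assigning the names $\alpha_{\bullet},\beta_{\bullet},\gamma_{\bullet}$ with their indices and reading the two gradings $\|\cdot\|$ (the internal $\mathbb{Z}$-degree) and $|\cdot|$ (parity, equal to $\sum r_{i}$ mod $2$) off the monomials gives the stated bases, dimensions, and degrees. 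For $\mathcal{F}_{2,2}^{4}$ the subspace of classes that multiply to zero forms an ideal with trivial multiplication, which produces the semidirect decomposition $\mathcal{U}_{2,2}^{4}(\underline{t})\ltimes\mathcal{V}_{2,2}^{4}(\underline{t})$, exactly as the abelian ideal summand in Theorem~\ref{2.1.2}.

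Finally I would compute the associative (super)algebra structure. Since the Hochschild--Serre spectral sequence is multiplicative, the product on $\mathrm{E}_{\infty}$ is induced from the divided-power product $\mathcal{O}((\mathcal{F}_{2,2}^{t}/\mathcal{I}_{2,2}^{t})^{\ast};\underline{t})\otimes\mathcal{O}((\mathcal{I}_{2,2}^{t})^{\ast};\underline{t})$; but the statement is a claim about $\mathrm{DPH}^{\bullet}$ itself, so I must lift each basis class to an honest divided-power cocycle in $\mathcal{O}(\mathcal{F}_{2,2}^{t}{}^{\ast};\underline{t})$, multiply these using Eq.~(\ref{2}), and reduce modulo $\mathrm{Im}\,d$. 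The lifting is forced by closedness, and the re-expansion produces the correction coefficients visible in the multiplication tables — the divided-power binomials $\binom{(i_{1}+i_{2})p}{i_{1}p}$, $\binom{(i+s)p-1}{ip}$, the linear factors like $(j+1)$, $(t+1)$, the signs $(-1)^{\bullet}$, and the occasional $\tfrac12$ coming from the $x_{k}^{\ast^{(2)}}$-terms in $d(X_{2}^{\ast})$ for $t=3,5$.

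\textbf{Main obstacle.} The heavy step is this last one: choosing explicit closed divided-power representatives and carrying out the product-and-reduce bookkeeping, four algebras at once, while correctly tracking the Eq.~(\ref{2}) binomials, the $p$-divisibility constraints ($\delta_{a}$), and the sign conventions of the super-exterior product. The kernel/image computation of $d_{2}$ is routine once the connecting brackets are written down; it is the passage from the associated-graded product on $\mathrm{E}_{\infty}$ to the true product on $\mathrm{DPH}^{\bullet}$ — and keeping the indexing uniform across $t=1,3,4,5$ despite the extra $Y_{1}^{\ast^{(2)}}$ and $X_{1}^{\ast}\wedge Y_{1}^{\ast}$ terms — that carries all the difficulty.
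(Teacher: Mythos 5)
Your proposal follows exactly the route the paper takes: its entire proof of this theorem is the single line "The proof is similar to Theorem \ref{3}," i.e.\ apply the Hochschild--Serre spectral sequence of Lemma \ref{2.7} to the central derived ideal $\mathrm{span}_{\F}\{X_{2},Y_{2}\}$, compute $d_{2}$ from the connecting brackets, read off $\mathrm{E}_{\infty}=\mathrm{E}_{3}$, and recover the ring structure by lifting to genuine divided-power cocycles and multiplying via Eq.~(\ref{2}). Your outline correctly identifies the ideal, the extra $Y_{1}^{\ast^{(2)}}$ and $X_{1}^{\ast}\wedge Y_{1}^{\ast}$ terms distinguishing the four cases, and the key point that the associated-graded product on $\mathrm{E}_{\infty}$ must still be upgraded to the true product on $\mathrm{DPH}^{\bullet}$, so it matches the paper's intended argument.
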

\begin{proof}
The proof is similar to Theorem \ref{3}.
\end{proof}

\small\noindent \textbf{Acknowledgment}\\
The first author was supported by the NSF of China (11501151) and the NSF of Heilongjiang Province (A2015003, A2017005). The corresponding author was supported by the NSF of China (11471090, 11701158) and the NSF of Heilongjiang Province (A2015017). The authors appreciate the referee for his/her valuable suggestions.

\end{document}